% chktex-file 44
% chktex-file 1
% chktex-file 3
% chktex-file 24
% chktex-file 25
% chktex-file 36
% chktex-file 37
\documentclass[11pt,reqno,oneside, a4paper]{amsart}
\usepackage[ansinew]{inputenc}
\usepackage[english]{babel}
\usepackage[T1]{fontenc} 
\usepackage{lmodern}
\usepackage{amsfonts} 		% AMS-paketteja
\usepackage{amsmath}
\usepackage{amssymb}
\usepackage{bbm}
\usepackage{amsthm}
\usepackage{arydshln}
\usepackage{xcolor}
\usepackage{enumitem}
\usepackage{mathabx}
\usepackage{comment}
\usepackage{mathrsfs}
\usepackage{url}
\usepackage{hyperref}
\usepackage{cite}

\usepackage[capitalize]{cleveref}
\crefname{enumi}{}{}
\crefname{equation}{}{}

\usepackage{geometry}

 \specialcomment{supplementary}
{\colorlet{savedcolor}{.} \color{blue} \begingroup \ttfamily \bigskip \smallskip  \noindent \underline{Supplementary details:} \newline \newline \footnotesize }{\endgroup \smallskip \color{savedcolor}}
%%To add/remove comments to be excluded, uncomment/comment the next line out
 \excludecomment{supplementary}

\setenumerate[0]{label=\textnormal{(\roman*)}}

\newcommand{\R}{\mathbb{R}}
\newcommand{\RD}{\mathbb{R}^d}
\newcommand{\EX}{\mathbb{E}}

\numberwithin{equation}{section}

\newcommand{\N}{\mathbb{N}}

\newcommand{\PR}{\mathbb{P}}

\newcommand{\norm}[1]{\ensuremath{\Vert #1 \Vert}}
\newcommand{\scnorm}[1]{\ensuremath{\left\Vert #1 \right\Vert}}
\newcommand{\abs}[1]{\ensuremath{\vert #1 \vert}}

\newcommand{\eps}{\varepsilon}

 % remove math for text

\newtheorem{thm}{Theorem}[section]
\newtheorem{definition}[thm]{Definition}
\newtheorem{lemma}[thm]{Lemma}
\newtheorem{cor}[thm]{Corollary}
\newtheorem{prop}[thm]{Proposition}
\theoremstyle{remark}

\newtheorem{remark}[thm]{Remark}

\begin{document}

\title[1D stochastic pressure equation]{1D stochastic pressure equation with log-correlated Gaussian coefficients}

%\title[1D stochastic boundary value problems]{1D stochastic boundary value problems with log-correlated Gaussian diffusion coefficients}

\author[Avelin]{Benny Avelin}
\address{Uppsala University, Department of Mathematics, 751 06 Uppsala, Sweden}
\email{benny.avelin@math.uu.se}

\author[Kuusi]{Tuomo Kuusi}
\address{University of Helsinki, Department of Mathematics and Statistics, PO Box 68 (Pietari Kalmin katu 5), 00014 University of Helsinki, Finland}
\email{tuomo.kuusi@helsinki.fi}

\author[Nummi]{Patrik Nummi}
\address{Aalto University School of Electrical Engineering, Department of Information and Communications Engineering, PO Box \textcolor{black}{15600}, 00076 Aalto, Finland}
\email{patrik.nummi@aalto.fi}

\author[Saksman]{Eero Saksman}
\address{University of Helsinki, Department of Mathematics and Statistics, PO Box 68 (Pietari Kalmin katu 5), 00014 University of Helsinki, Finland}

\email{eero.saksman@helsinki.fi}

\author[T\"olle]{Jonas M. T\"olle}
\address{Aalto University School of Science, Department of Mathematics and Systems Analysis, PO Box 11100, 00076 Aalto, Finland}
\email{jonas.tolle@aalto.fi}

\author[Viitasaari]{Lauri Viitasaari}
\address{Aalto University School of Business, Department of Information and Service Management, PO Box \textcolor{black}{21210}, 00076 Aalto, Finland}
\email{lauri.viitasaari@aalto.fi}

\keywords{Stochastic boundary value problems, elliptic second order random ordinary differential equations, Gaussian log-correlated fields, Gaussian multiplicative chaos, Wick products, renormalization, Hida-Kondratiev distributions, $S$-transform}
\subjclass[2020]{34F05, 35J57, 46F10, 60G15, 60G57, 60G60, 60H10, 60H40}
\date{\today}

\thanks{BA acknowledge support by the Swedish Research Council (VR) dnr: 2019--04098. TK and PN acknowledge support by the Academy of Finland and the European Research Council (ERC) under the European Union's Horizon 2020 research and innovation program (grant agreement No 818437). JMT was supported by the Academy of Finland and the European Research Council (ERC) under the European Union's Horizon 2020 research and innovation program (grant agreements no. 741487 and no. 818437). TK and ES acknowledge support by the Academy of Finland CoE FiRST}

\begin{abstract}
    We study unique solvability for one-dimensional stochastic pressure equation with diffusion coefficient given by the Wick exponential of log-correlated Gaussian fields. We prove well-posedness for Dirichlet, Neumann and periodic boundary data and the initial value problem, covering the cases of both the Wick renormalization of the diffusion and of point-wise multiplication. We provide explicit representations for the solutions in both cases, characterized by the $S$-transform and the Gaussian multiplicative chaos measure.
\end{abstract}

\allowdisplaybreaks

%\date{\footnotesize{Z}}

\maketitle

{\footnotesize\tableofcontents}

%{\small \textsc{Abstract:}}\footnote{AMS classification X} \textsf{xxx}

%\tableofcontents

\section{Introduction}
We are studying existence and uniqueness of solutions $U$ to the following second order linear stochastic boundary value problem on $[0,T]\subset\R$,
\begin{align}\label{eq:ode}
    \begin{cases}
        -(e^{\diamond (-X_\beta(t))} \times U'(t))' & = f(t),  \qquad t \in (0,T), \\
        \textnormal{boundary data at}               & \{0,T\},
    \end{cases}
\end{align}
with deterministic $f\in L^1(0,T)$. The diffusion coefficient $e^{\diamond (-X_\beta)}$ is formally the Wick-exponential of a log-correlated scalar Gaussian random field \textcolor{black}{(see Definition \ref{def:log-correlated-field})} $X_\beta$ on $[0,T]$ with parameter $\beta\in (0,1)$, and here the symbol $\times$ denotes either the usual point-wise product $\cdot$ in $\R$ or the Wick product $\diamond$. As now the diffusion coefficient $e^{\diamond (-X_\beta)}$ cannot be defined point-wise, we approximate $X_\beta$ by its mollified version and define the solution to~\cref{eq:ode} as the limit (in a suitable sense) of the solutions of the approximate equations
\begin{align*}
    \begin{cases}
        -C_\varepsilon(e^{\diamond (-X_{\beta,\varepsilon}(t))} \times U'_\varepsilon(t))' & =  f(t),  \qquad t \in (0,T), \\
        \textnormal{boundary data at}                                                      & \{0,T\},
    \end{cases}
\end{align*}
where now $C_\varepsilon$ is a suitably chosen normalization constant, depending on whether $\times$ corresponds to the usual product in which case $C_\varepsilon = e^{\EX [X_{\beta,\varepsilon}^2]}$, or whether $\times$ corresponds to the Wick product in which case $C_\varepsilon \equiv 1$. We note that the normalization affects the boundary data, but is omitted here.

\textcolor{black}{The present work considers the one-dimensional case of the stochastic boundary value problem studied in \cite{renormalized}. In that paper, the problem was analyzed in the $d$-dimensional setting but was restricted to the equation involving the Wick product and periodic boundary data. Here, we specialize to $d=1$, but generalize the analysis to include both the Wick product and the more physically direct, but mathematically more singular, ordinary scalar product. This one-dimensional focus makes the problem tractable, allowing for a detailed analysis of both cases. In particular, we are able to derive explicit solution formulas for a variety of boundary conditions, which enables a direct comparison between the regularized "non-Wick" solutions and the distributional "Wick" solutions.}

The problem is the one-dimensional stochastic pressure equation which can be used to model enhanced geothermal heating. The solutions to the stochastic pressure equation are steady state solutions to a creeping-water flow in porous media. The Wick exponential of $X_\beta$, i.e.~$e^{\diamond (-X_\beta)}$ models the random permeability of the rock. For $X_\beta$ replaced by white noise, the stochastic pressure equation has been studied with methods from white noise theory in~\cite{holden2009,holden1995,holden1993,Lind1991}, see also~\cite{galvis2009,wan2013,zhang2017} for the numerical analysis of these equations. In particular, the one-dimensional problem with white noise has been discussed in~\cite[Subsection 4.6.3]{holden2009}. A common ingredient in showing existence and uniqueness to these types of equations is to employ some kind of integral transform (see, for instance~\cite{holden1995,potthoff1992} and references therein). 

Observing problem~\cref{eq:ode} closely, one finds that it can be written formally as the ordinary differential equation (ODE),
\begin{equation*}
    U''\textcolor{black}{-} X_\beta'\times U'= -\frac{1}{e^{\diamond (-X_\beta)}} \times f, % -e^{-(\diamond (-X_\beta))}\times f,
\end{equation*}
which is generally ill-posed due to the lack of regularity for the realizations of $X_\beta$. Note that $U$ is formally the stationary solution to the (1+1)-dimensional weighted random Kolmogorov equation with singular drift, compare with~\cite{Russo2007,flandoli2017},
\begin{equation*}
    -\frac{1}{e^{\diamond (-X_\beta)}}\times\partial_t u=\partial_x\partial_x u \textcolor{black}{-} \partial_x X_\beta \times \partial_x u+\frac{1}{e^{\diamond (-X_\beta)}}\times Cf, \quad t>0,\quad x\in (0,T).
\end{equation*}
The need of renormalization for stochastic partial differential equations of this type is already natural in one spatial dimension. See~\cite{H:14,BHZ:19,BCCH:21} for a systematic treatment of well-posedness and renormalization of singular stochastic PDEs.

For our stationary problem, we are treating both the point-wise product and the so-called Wick product, cf.~\cite{gjessing1991,janson1997,kondratiev1996}. See~\cite[Subsection 3.5]{holden2009} for a direct comparison in the one-dimensional white noise case. From a certain asymptotic point of view, Wick multiplication corresponds to the It\^o-stochastic integral and point-wise multiplication corresponds to the Stratonovich stochastic integral, see~\cite{Hu1996} for details. Wick products are the first order term in the so-called Wick-Malliavin expansion of the product of two random variables~\cite{venturi2013}, so that in particular, Wick products preserve the expected value. We note that in the case of log-correlated fields, due to the lack of second moments, the white noise approach has to be transferred carefully using so-called Gaussian multiplicative chaos measures~\cite{saksman2018,saksman2015}.

As boundary data for~\cref{eq:ode}, we are studying the cases of Dirichlet, Neumann and periodic boundary data, as well as the initial value problem (IVP).
The different versions of boundary conditions and Wick vs.~non-Wick for our problem are presented in the following~\cref{table:equations}. The corresponding explicit representations for the solutions are gathered in~\cref{table:solutions}. 

\begin{table}[ht!]
    \begin{tabular}{lll}
        \hline
        \multicolumn{1}{|l|}{Dirichlet} & \multicolumn{1}{l|}{Wick}& \multicolumn{1}{l|}{Non-Wick}
        \\
        \hline
        \multicolumn{1}{|l|}{Equation} & \multicolumn{1}{l|}{$-(e^{\diamond (-X_\beta)} \diamond U')'=f $}    & \multicolumn{1}{l|}{$-e^{\EX [X_\beta^2]}(e^{\diamond(- X_\beta)} \cdot U')'=f$}
        \\
        \hline
        \multicolumn{1}{|l|}{Data} 
        & \multicolumn{1}{l|}{
            \begin{tabular}[c]{@{}l@{}}
                $U(0)=U_1$,
                \\ 
                $U(T)=U_2$
            \end{tabular}
        } 
        & \multicolumn{1}{l|}{
            \begin{tabular}[c]{@{}l@{}}
                $U(0)=U_1$,
                \\ 
                $U(T)=U_2$
            \end{tabular}
        }
        \\
        \hline
        % &
        % \\
        \hline
        \multicolumn{1}{|l|}{Neumann}   & \multicolumn{1}{l|}{Wick}& \multicolumn{1}{l|}{Non-Wick}
        \\
        \hline
        \multicolumn{1}{|l|}{Equation}  & \multicolumn{1}{l|}{$-(e^{\diamond (-X_\beta)} \diamond U')'=f$} & \multicolumn{1}{l|}{$-e^{\EX [X_\beta^2]}(e^{\diamond (-X_\beta)}\cdot U')'=f$}
        \\
        \hline
        \multicolumn{1}{|l|}{Data}      
        & \multicolumn{1}{l|}{
            \begin{tabular}[c]{@{}l@{}}
                $e^{\diamond X_\beta(0)} \diamond U'(0)=U_1$,
                \\ 
                $e^{\diamond X_\beta(T)} \diamond U'(T)=U_2$
            \end{tabular}
        } 
        & \multicolumn{1}{l|}{
            \begin{tabular}[c]{@{}l@{}}
                $e^{\EX [X_\beta^2]}e^{\diamond (-X_\beta(0))}\cdot U'(0)= U_1$,
                \\ 
                $e^{\EX [X_\beta^2]} e^{\diamond (-X_\beta(T))}\cdot U'(T) = U_2$
            \end{tabular}
        }
        \\
        \hline
        % &     &
        % \\
        \hline
        \multicolumn{1}{|l|}{IVP} & \multicolumn{1}{l|}{Wick}& \multicolumn{1}{l|}{Non-Wick}
        \\
        \hline
        \multicolumn{1}{|l|}{Equation} & \multicolumn{1}{l|}{$-(e^{\diamond (-X_\beta)} \diamond U')'=f$} & \multicolumn{1}{l|}{$-e^{\EX [X_\beta^2]}(e^{\diamond(- X_\beta)}\cdot U')'=f $}
        \\
        \hline
        \multicolumn{1}{|l|}{Data}      
        & \multicolumn{1}{l|}{
            \begin{tabular}[c]{@{}l@{}}
                $U(0)=U_1$,
                \\ 
                $e^{\diamond X_\beta(0)}\diamond U'(0)=U_2$
            \end{tabular}
        }                      
        & \multicolumn{1}{l|}{
            \begin{tabular}[c]{@{}l@{}}
                $U(0)=U_1$, 
                \\ 
                $e^{\EX [X_\beta^2]}e^{\diamond(- X_\beta(0))}\cdot U'(0)=U_2$
            \end{tabular}
        }
        \\
        \hline
        % &
        % \\
        \hline
        \multicolumn{1}{|l|}{Periodic} & \multicolumn{1}{l|}{Wick} & \multicolumn{1}{l|}{Non-Wick}
        \\
        \hline
        \multicolumn{1}{|l|}{Equation} & \multicolumn{1}{l|}{$-(e^{\diamond (-X_\beta)} \diamond U')'=f$} & \multicolumn{1}{l|}{$-e^{\EX [X_\beta^2]}(e^{\diamond (-X_\beta)} \cdot U')'=f$}
        \\
        \hline
        \multicolumn{1}{|l|}{Data}
        & \multicolumn{1}{l|}{
            \begin{tabular}[c]{@{}l@{}}
                $U(0)=U(T)$,
                \\
                $e^{\diamond X_\beta(0)} \diamond U'(0)=e^{\diamond X_\beta(T)} \diamond U'(T)$
            \end{tabular}
        }
        & \multicolumn{1}{l|}{
            \begin{tabular}[c]{@{}l@{}}
                $U(0)=U(T)$, \\ $e^{\diamond(- X_\beta(0))} \cdot U'(0) =e^{\diamond(- X_\beta(T))}\cdot U'(T)$
            \end{tabular}
        }
        \\
        \hline
    \end{tabular}
    \caption{Equations and corresponding boundary data to in Wick / non-Wick cases.}\label{table:equations}
\end{table}

\footnotesize
\begin{table}[ht!]
    \begin{tabular}{lll}
        \hline
        \multicolumn{1}{|l|}{Solution} & \multicolumn{1}{l|}{Dirichlet} & \multicolumn{1}{l|}{Random variable $\kappa$}
        \\
        \hline
        \multicolumn{1}{|l|}{Wick} & \multicolumn{1}{l|}{$U(t) = U_1 + \int_{0}^t (\kappa  + F(s)) \diamond e^{\diamond X_\beta(s)}\, ds$}
        & \multicolumn{1}{l|}{$\left(U_2 - U_1 - \int_{0}^T F(s) e^{\diamond X_\beta(s)}\, ds \right) \diamond \left(\int_{0}^T e^{\diamond X_\beta(s)}\, ds  \right)^{\diamond(-1)}$}
        \\
        \hline
        \multicolumn{1}{|l|}{Non-Wick}
        & \multicolumn{1}{l|}{$U(t) = U_1 + \int_{0}^t (\kappa + F(s)) e^{\diamond X_\beta(s)} \,ds$} 
        & \multicolumn{1}{l|}{$\left(U_2-U_1 - \int_{0}^T F(s) e^{\diamond X_\beta(s)} \, ds \right)\left(\int_{0}^T e^{\diamond X_\beta(s) }\, ds\right)^{-1}$}
        \\
        \hline
        % & 
        % &
        % \\
        \hline
        \multicolumn{1}{|l|}{Solution} & \multicolumn{1}{l|}{Neumann} & \multicolumn{1}{l|}{Random variable $\kappa$}
        \\
        \hline
        \multicolumn{1}{|l|}{Wick} 
        & \multicolumn{1}{l|}{
            \begin{tabular}[c]{@{}l@{}}
                $U(t)= \kappa +  \int_{0}^t (U_1 + F(s) ) e^{\diamond X(s)}\, ds$ 
                \\ 
            \end{tabular}
        } 
        & \multicolumn{1}{l|}{
            \begin{tabular}[c]{@{}l@{}}
                Arbitrary, instead compatibility 
                \\ 
                $U_2 - U_1 = -\int_{0}^T f(s) \, ds$
            \end{tabular}
        }
        \\
        \hline
        \multicolumn{1}{|l|}{Non-Wick} & \multicolumn{1}{l|}{$U(t) = \kappa + \int_{0}^t (U_1 + F(s)) e^{\diamond X(s)} \, ds$}
        & \multicolumn{1}{l|}{
            \begin{tabular}[c]{@{}l@{}}
                Arbitrary, instead compatibility 
                \\ 
                $U_2 - U_1 =  -\int_{0}^T f(s) \, ds$
            \end{tabular}
        }
        \\
        \hline
        % &
        % \\
        \hline
        \multicolumn{1}{|l|}{Solution} & \multicolumn{1}{l|}{IVP} & \multicolumn{1}{l|}{Random variable $\kappa$}
        \\
        \hline
        \multicolumn{1}{|l|}{Wick} & \multicolumn{1}{l|}{$U(t) = U_1 + \int_{0}^t (U_2+ F(s)) e^{\diamond X_\beta(s)}\, ds$}& \multicolumn{1}{l|}{None}
        \\
        \hline
        \multicolumn{1}{|l|}{Non-Wick} & \multicolumn{1}{l|}{$U(t) = U_1 + \int_{0}^t (U_2 +F(s) )e^{\diamond X_\beta(s)} \, ds$} & \multicolumn{1}{l|}{None}
        \\
        \hline
        % &
        % \\
        \hline
        \multicolumn{1}{|l|}{Solution $U(t)$} & \multicolumn{1}{l|}{Periodic} & \multicolumn{1}{l|}{Random variable $\kappa$}
        \\
        \hline
        \multicolumn{1}{|l|}{Wick} & \multicolumn{1}{l|}{$C + \int_{0}^t (\kappa + F(s))\diamond e^{\diamond X_\beta(s)} \,ds$} &
        \multicolumn{1}{l|}{
            \begin{tabular}[c]{@{}l@{}}
                $-\int_{0}^T F(s) e^{\diamond X_\beta(s) }\, ds \left(\int_{0}^T e^{\diamond X_\beta(s)}\,ds\right)^{\diamond(-1)}$,
                \\
                $f$ mean zero, $C$ arbitrary
            \end{tabular}
        }
        \\
        \hline
        \multicolumn{1}{|l|}{Non-Wick} & \multicolumn{1}{l|}{$C + \int_{0}^t (\kappa + F(s)) e^{\diamond X_\beta(s)} \,ds$} & \multicolumn{1}{l|}{$-\frac{\int_{0}^T F(s) e^{\diamond X_\beta(s) }\, ds}{\int_{0}^T e^{\diamond X_\beta(s)}\,ds}$, $f$ mean zero, $C$ arbitrary}
        \\
        \hline
    \end{tabular}
    \caption{Solutions to Wick / non-Wick problems with different boundary data. Here we denote $F(t) = -\int_{0}^t f(s) \, ds$ and $e^{\diamond X_\beta(s)}ds$ is the Gaussian multiplicative chaos measure $\mu_\beta(ds)$.}\label{table:solutions}
\end{table}
\normalsize

Note that in the solutions we have, with a slight abuse of notation, written $e^{\diamond X_\beta(s)}ds$ to refer to the Gaussian multiplicative chaos measure $d\mu_\beta(s)$ (see~\cref{sec:GMC} for details), which is not an absolutely continuous measure with respect to the Lebesgue measure on $[0,T]$. We obtain our solutions via an approximation argument. That is, we approximate the log-correlated field $X(s)$ and show that the solutions to the approximated equations converge, in a suitable sense. Actually, in \cref{table:equations} the normalization $e^{\EX [X_\beta^2]}$ blows up in the limit, as the variance of the log-correlated field is unbounded, while it is not true along the approximating sequence. However, it turns out that this is the correct normalization to obtain a non-trivial solutions in the limit. \textcolor{black}{Note also that the choice of coefficient $e^{\diamond (-X_\beta)}$ in Equation \eqref{eq:ode} translates into measures $e^{\diamond X_\beta(s)}ds$ on the solutions that corresponds to Gaussian multiplicative chaos. Similarly the coefficient  $e^{\diamond X_\beta}$ would translate into measures $e^{\diamond (-X_\beta(s))}ds$ on solutions. Since $-X_\beta$ and $X_\beta$ are equal in law, one observes that $e^{\diamond (-X_\beta(s))}ds$ are also well-defined.}

Note also that rather interestingly, the Wick and path-wise equations admit solutions that are of the same form, with the only difference being, whether products are considered as ordinary product or Wick product. This can be viewed as a consequence of the fact that by using the ordinary product, we insert multiplicative deterministic renormalizations, while this can be omitted in the Wick case as Wick product is renormalizing in itself. Particularly this is visible if one considers special cases of initial value problem or the Neumann problem, in which the solution to the path-wise equation and to the Wick equation are equal, well-defined H\"older continuous random functions. On the other hand, in the Dirichlet case, the solution to the Wick equation can be viewed only through abstract white noise analysis involving Wick products, while the solution to the path-wise equation can be again viewed as a
well-defined H\"older continuous random function.

Motivated by similarities in the representations, we also consider ``projections'' to the subspace of $L^2(\Omega)$ of the form, with suitably chosen $\beta'<1$,
\begin{align} \label{eq:u-ansatz}
    U(y) = \int_{(0,T)} \varphi(y,a) \, d \mu_{\beta'}(a),\qquad y \in (0,T)
\end{align}
where $\varphi(y,\cdot) \in W_0^{-\frac{1-\beta'^2}{2},2}([0,T])$, the dual of the fractional Sobolev (Hilbert) space $W^{\frac{1-\beta'^2}{2},2}([0,T])$. This corresponds to the projections into a subspace that have a similar form to what is presented in~\cref{table:solutions}, but having less moments if one chooses $\beta'$ larger. In particular, for $\beta'=\beta<1$ we recover the given solutions in the Neumann case or in the initial value problem. Our main contribution here is the definition of the so-called $S$-transform of random variables~\cref{eq:u-ansatz} at a point $X(z)$ of the log-correlated field (that do not admit point-wise evaluation in the classical sense) directly, allowing us to employ Fourier analysis techniques to invert a potential operator between a suitable Sobolev space and its dual.

%\subsection*{Organisation of the paper}
The rest of the article is organized as follows. In~\cref{sec:preliminaries} we introduce some preliminaries. In particular, we recall basics of log-correlated fields and Gaussian multiplicative chaos measures. In~\cref{sec:non-wick} we prove, by means of approximation arguments, the existence of solutions to the renormalized path-wise equations. In~\cref{sec:wick} we use abstract Wick calculus to show existence of solutions to the Wick equations, again via approximation, and in~\cref{sec:projection} we study projections via $S$-transforms.

% \textcolor{violet}{insert some recall of the earlier Wick-product problem\ldots, prev research etc} 

\section{Preliminaries}
\label{sec:preliminaries}
We begin by fixing some notation and by recalling some function spaces that will be used in the sequel. \textcolor{black}{First, we note that we will denote the value of a stochastic process $U$ at time $t$ by $U(t)$ (as opposed to the typical probabilistic convention $U_t$, or $U_t(\omega)$).} Throughout, we will consider an open bounded interval $D = (0,T) \subset \R$. We denote by $C^k(D)$ (or by $C^k$ in short) the space of functions $f \colon D \to \R$, such that $f$ is $k$-times continuously differentiable on $D$. Similarly, we denote $C^\infty(D)$ the space of infinitely continuously differentiable functions, that is, $C^\infty(\overline{D}) = \bigcap_{k=1}^\infty C^k(\overline{D})$. By $C_{\textcolor{black}{c}}^k$, and $C_{\textcolor{black}{c}}^\infty$ we denote the corresponding spaces with the additional requirement that its elements have compact support.
 \textcolor{black}{Throughout this paper,} E-valued random variables are assumed to be Bochner measurable (see~\cite[Chapter 2]{diestel-uhl-john1977} for details). In the sequel, we denote by $L^p(\Omega;E)$, for $p\geq 1$, the space of Bochner measurable $E$-valued random variables such that $\EX [\Vert Y\Vert_E^p] < \infty$.

\subsection{Log-correlated Gaussian field}\label{sec:GMC}
In this subsection we recall some basic facts on log-correlated Gaussian fields and Gaussian multiplicative chaos measures. For details, we refer to survey articles~\cite{duplantier2014,rhodes2014} and references therein.

\begin{definition}[Log-correlated Gaussian field]\label{def:log-correlated-field}
    A centered (distribution-valued) Gaussian process $X$ on a bounded open interval $D \subset \mathbb{R}$ is called  \emph{a log-correlated field}  if it has covariance
    \begin{equation}
        \label{eq:general-cov}
        R(y,z):= R_X(y,z) := \EX [X(y)X(z)] = \log \frac{1}{|z-y|} + g(z,y),
    \end{equation}
    where we assume $g(y,z) = h(y-z)$ for some $h \in \textcolor{black}{C_{c}^\infty(D)}$.
   
\end{definition}

We interpret the covariance yielding the kernel of the covariance operator, as point-wise evaluations are not bounded:
\begin{equation*}
    \EX [\langle X,\psi_1\rangle \langle X,\psi_2\rangle]\; =\; \int_{D\times D}R(y,z)\psi_1(y)\psi_2(z) \, dz \, dy
\end{equation*}
for any $\psi_1,\psi_2\in C_{\textcolor{black}{c}}^\infty (D)$.

\begin{remark}
 \textcolor{black}{The role of the function $g$ in~\cref{def:log-correlated-field} is to guarantee that~\cref{eq:general-cov} yields a well-defined covariance function. It is customary in the literature to assume $g \in \textcolor{black}{C_{c}^\infty(D \times D)}$. For our purposes however, \textcolor{black}{we assume, in addition, that $g$ depends only on} the distance between the points $z,y$, that is, $g(y,z) = h(y-z)$ for some smooth function $h$. This yields a scaling of the form $e^{R_X(y,z)} = e^{h(y-z)}\abs{y-z}^{-1}$ for the exponential covariance.
   We also note that in what follows, we will consider equations where the solution essentially inherits its regularity from the part of the covariance with the logarithmic singularity. Therefore, for the sake of \textcolor{black}{notational} simplicity, we typically present the computations only in the case $g\equiv 0$, while adding $g(y,z)=h(y-z)$ is straightforward.}
\end{remark}
\begin{definition}[Wick exponential]\label{def:wick-exponential}
    Let $Z$ be a centered Gaussian random variable. We define the Wick exponential of $Z$ by
    \begin{equation*}
        e^{\diamond Z} \;:=\; e^{Z - \EX [Z^2 /2]}.
    \end{equation*}
\end{definition}
\begin{remark}\label{rem:wick-exponential}
    Wick exponential can also be expressed as
    \begin{equation}\label{eq:wick-exponential-series}
        e^{\diamond Z} \;:=\; e^{Z - \EX [Z^2 /2]}\;=\;\sum_{n=0}^\infty \frac{Z^{\diamond n}}{n!},
    \end{equation}
    where $Z^{\diamond n}$ are Wick powers defined through Wick products, see~\cref{def:wick-prod}, and the convergence \textcolor{black}{of the sum} is in $L^2$.
\end{remark}
The family of Gaussian multiplicative chaos (=GMC) measures $d\mu_\beta$ on the real line, depending on a parameter $\beta \in (0, \sqrt{2})$, are random measures constructed from the Wick exponential of a log-correlated field $X$. The GMC measure on the real line is formally defined for all Borel sets $A \in \mathcal{B}(\R)$ as the random measure $\mu_\beta$ given by
\begin{align*}
    \mu_\beta(A) = \int_A e^{\beta X(y) - \frac{\beta^2}{2}\EX [X(y)^2] } \, dy,
\end{align*}
where $X$ is a log-correlated Gaussian field as in~\cref{def:log-correlated-field}. The fact that such a measure exists follows by the next lemma. If $\theta$ is a smooth\textcolor{black}{, positive definite} mollifier \textcolor{black}{(i.e. a compactly supported, smooth function on $(0,T)$ such that $\int_{0}^T \theta \, dx = 1$ and $\lim_{\eps \to 0} \eps^{-1} \theta(x/\eps) = \delta(x)$ where $\delta$ is the Dirac delta)} we denote by $X_\varepsilon$ the mollified field $X_\varepsilon = X \ast \theta_{\varepsilon}$, where $\theta_{\varepsilon}(y) = \varepsilon^{-1}\theta\left(\frac{y}{\varepsilon}\right)$ for $\eps > 0$. Then $X_\varepsilon$ defines a centered and smooth Gaussian field. \textcolor{black}{We refer to $X_\eps$ as the convolution approximation of the log-correlated Gaussian field.} We have the following useful technical result. \textcolor{black}{Parts (ii) and (iii) are known results taken from the literature while part (i) follows from elementary computations.}

\begin{lemma}[Approximation lemma]\label{lem:mollification-of-log-correlated-field}
    Let $X$ be a log-correlated Gaussian field on $(0,T)$, and $\beta \in (0,1)$ be a parameter. Let $0 < \eps < \eps'$, $x,y \in (0,T)$. Then the convolution approximation $X_\eps$ satisfies:
    \begin{enumerate}
        \item $X_\eps$ has continuous realizations and $\EX [X_\eps(z)^2]$ is finite and independent of $z$.
        \item The sequence of random measures
              \begin{align}\label{eq:wickexp}
                  d\mu_{\beta,\varepsilon}(y) := \mathbbm{1}_{(0,T)}(y)\exp\left(\beta X_\varepsilon(y) - \frac{\beta^2}{2}\EX \left[X_\varepsilon^2\right]\right)dy,
              \end{align}
              converges in probability in the space of Radon measures equipped with the topology of weak \textcolor{black}{star} convergence towards a random measure $d\mu_\beta(y)$ supported on $[0,T]$. \textcolor{black}{That is, let $f$ be a function in $C_b([0,T])$ and consider the family of random variables $\left\{\int_{0}^T f(y) \mu_{\beta,\varepsilon}(dy)   \right\}$, indexed by $\varepsilon > 0$. Then $$\int_{0}^T f(y) \mu_{\beta,\varepsilon}(dy) \to \int_{0}^T f(y) \mu_{\beta}(dy)$$ in probability as $\varepsilon \to 0$.} \textcolor{black}{The random measure $d\mu_\beta(y)$ is} called the Gaussian multiplicative chaos measure.
        \item $\lim_{\eps' \to 0} \EX \left[ X_\eps(x) X_{\eps'}(y)\right] = R(x,y)$, for $x \neq y$ fixed.
    \end{enumerate}
    %	Moreover,
    %	\begin{align} \label{eq:log-correlated-approximation-property}
    %	 e^{-\EX [X_\eps^2]} e^{-\diamond X_\eps(y)} = e^{\diamond X_\eps(y)}
    %\end{align}
    %holds in law for any $y$ in $(0,T)$, and the convergence result in (ii) holds also for this expression.
\end{lemma}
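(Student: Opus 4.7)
My plan is to dispatch the three parts in turn, essentially following the paper's own remark that (i) is elementary while (ii) and (iii) are extracted from the literature. The one structural observation that I will exploit throughout is that the assumption $g(y,z) = h(y-z)$ makes the covariance translation invariant, $R(y,z) = \tilde R(y-z)$ with $\tilde R(u) = \log(1/|u|) + h(u)$.

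For part (i), I will start from the fact that $\theta_\eps$ is smooth and compactly supported, so for $z\in(0,T)$ the shifted mollifier $\theta_\eps(z-\cdot)$ lies in $C_c^\infty(D)$ (for $z$ within distance $\eps$ of $\partial D$ I would either truncate or appeal to a natural extension of $X$ slightly outside $D$). Then $X_\eps(z):=\langle X,\theta_\eps(z-\cdot)\rangle$ is a well-defined centered Gaussian random variable with
\begin{equation*}
\EX[X_\eps(z)^2] = \int_D\int_D \tilde R(u-v)\,\theta_\eps(z-u)\theta_\eps(z-v)\,du\,dv.
\end{equation*}
The change of variables $u\mapsto u+z$, $v\mapsto v+z$ renders the right-hand side manifestly independent of $z$, and it is finite because $\log(1/|\cdot|)$ is locally integrable against a smooth product density. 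For continuity of realizations I would estimate $\EX[|X_\eps(z_1)-X_\eps(z_2)|^2]$ from the same representation, using the smoothness of $\theta_\eps$ to pull out a bound $C_\eps|z_1-z_2|^2$; Kolmogorov's continuity criterion then produces a continuous modification.

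For part (ii), I would invoke the by now classical theory of subcritical Gaussian multiplicative chaos. Existence and nontriviality of the limit go back to Kahane's original martingale construction, and the fact that convolution approximations against an arbitrary positive-definite mollifier converge in probability (in the weak-$*$ topology of Radon measures) to the same universal limit is part of the universality theorems of Shamov and of Robert--Vargas. The subcriticality condition in one dimension, which in our normalization reads $\beta\in(0,\sqrt 2)$ and is comfortably implied by the standing assumption $\beta\in(0,1)$, together with the log-plus-bounded form of $R$ from Definition~\ref{def:log-correlated-field}, are exactly what those theorems need; I would therefore cite the surveys \cite{rhodes2014,duplantier2014} and defer to them for details.

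For part (iii), I write
\begin{equation*}
\EX[X_\eps(x)X_{\eps'}(y)] = \int_D\int_D R(u,v)\,\theta_\eps(x-u)\theta_{\eps'}(y-v)\,du\,dv.
\end{equation*}
Since by hypothesis $\eps<\eps'$, sending $\eps'\to 0$ forces $\eps\to 0$ as well. For fixed $x\neq y$ I will choose disjoint neighborhoods of $x$ and of $y$ so that, for all sufficiently small $\eps'$, the supports of the two mollifiers are contained in these neighborhoods and are therefore separated; on the product region $R$ is smooth and bounded, and a standard mollifier-to-delta argument yields the limit $R(x,y)$. The main potential obstacle across all three parts is the interaction of the mollification with $\partial D$, which I would sidestep by restricting to points at distance at least $\eps$ from the boundary (which suffices for later use in the paper), or by extending $X$ naturally to a slightly enlarged interval; no deeper probabilistic input is required.
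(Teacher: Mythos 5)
Your proposal is correct and follows essentially the same route as the paper: part (i) by the same translation-invariance computation of the mollified covariance, part (ii) by appeal to the standard convergence/universality results for subcritical GMC (the paper cites Robert--Vargas and Kahane), and part (iii) by the mollifier-to-delta limit off the diagonal (which the paper instead delegates to a cited lemma of Junnila--Saksman--Webb, so your direct separated-supports argument is a slightly more self-contained variant of the same fact). The extra care you take with the boundary and with Kolmogorov's criterion for continuity only makes explicit what the paper leaves as "well-known."
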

Before giving the proof, we remark that~\cref{def:log-correlated-field} can naturally be extended to open sets $D \subset \RD$. It can then be shown that~\cref{lem:mollification-of-log-correlated-field} holds for GMC measures on $\RD$ for $0 < \beta < \sqrt{2d}$.
\begin{proof}[Proof of~\cref{lem:mollification-of-log-correlated-field} ]
    The announced convergence in (ii)
    follows from standard results, see \textcolor{black}{Theorem 2.1 in \cite{robert2010} (see also \cite{kahane1985}).} Moreover,
    (iii) follows from~\cite[Lemma 2.8]{saksman2018}. For (i), it is well-known that, by definition via convolution, the field $X_\eps$ has continuous realizations. It remains to prove that the variance of $X_\eps(z)$ is finite and does not depend on the point $z$.
    Now $X_\eps(z) := (\theta_\eps \ast X)(z) = \int_{\R} \theta_\eps(z-y) X(y) \, dy$ which is centered. By the translational invariance of the Lebesgue measure we get\textcolor{black}{, in the case $g\equiv 0$,}
    \begin{align*}
        \EX \left[X_\eps(z)X_\eps(z')\right] & = \EX \left[ \int_{\R} \theta_\eps(z-y) X(y) \, dy \int_{\R} \theta_\eps(z'-x)X(x) \, dx   \right] \\
                                            & =
        \int_{\R} \int_{\R} \theta_\eps(z-y) \theta_\eps(z'-x) \EX [X(y) X(x)] \, dy \, dx                                                       \\
                                            & =
        \int_{\R} \int_{\R} \theta_\eps(z-y) \theta_\eps(z'-x) \log \frac{1}{\abs{x-y}} \, dy \, dx                                             \\
                                            & =
        \int_{\R} \int_{\R} \theta_\eps(a) \theta_\eps(b) \log \frac{1}{\abs{z'-z-(b-a)}} \, da \, db
    \end{align*}
    from which we see that the variance $\EX [X_\eps(z)^2]$ is finite and in fact does not depend on $z$. Moreover, it is straightforward to see that the argument carries through if one adds $g(x,z) = h(x-z)$. This completes the proof.
\end{proof}

\begin{supplementary}
    Just for clarity, proof of~\cref{eq:log-correlated-approximation-property}. By~\cite[Corollary 3.39]{janson1997}: if $\xi$ and $\eta$ have joint centered Gaussian distribution, then
    \begin{equation*}
        e^{\diamond (\xi + \eta)} = e^{-\EX [\xi \eta]} e^{\diamond \xi} e^{\diamond \eta}.
    \end{equation*}
    Choose $\xi = X$, and $\eta = -X$, the above identity then reduces to
    \begin{equation*}
        e^{\diamond 0} = e^{\EX [X^2]} e^{\diamond X} e^{\diamond (-X)}.
    \end{equation*}
    Since $e^{\diamond 0} =1$, the previous display becomes
    \begin{equation*}
        e^{-\EX [X^2]} e^{-\diamond X} = e^{\diamond (-X)}.
    \end{equation*}
    Since $X$ is symmetric, this is equal in law to
    \begin{equation*}
        e^{-\EX [X^2]} e^{-\diamond X} = e^{\diamond X},
    \end{equation*}
    which is precisely~\cref{eq:log-correlated-approximation-property}.
\end{supplementary}

For the reader's convenience, we record the fact that the GMC measure $\mu_\beta$ has positive $p$-moments up to order $2/\beta^2$, see   ~\cite[Theorem 2.11]{rhodes2014}\textcolor{black}{, and has all negative moments finite:}
\begin{thm}\label{thm:GMC-moments}
    Let $X$ be the log-correlated Gaussian field on an open set $D \subset \R$, and $d \mu_\beta$ the associated Gaussian multiplicative chaos measure with parameter $0<\beta<\sqrt{2}$. Then the measure $\mu_\beta$ admits finite $p$-moments for all $p \in (0,\frac{2}{\beta^2})$\textcolor{black}{, and all $p \in (-\infty, 0).$} In particular, $\EX [\mu_\beta(A)] = \abs{A}$ for any $A \in \mathcal{B}(D)$, where $\abs{A}$ denotes the Lebesgue measure of $A$.
\end{thm}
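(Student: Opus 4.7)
The plan is to address the three claims in order, with the positive moments cited directly and both the expected-value identity and the negative moments established via the approximation $\mu_{\beta,\eps}$.

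The range $p \in (0, 2/\beta^2)$ of finite positive moments is exactly Theorem 2.11 of \cite{rhodes2014}, so there is nothing to prove beyond citation. For the identity $\E[\mu_\beta(A)] = |A|$, the first step is a Fubini calculation at the mollified level: since $X_\eps(y)$ is centered Gaussian, the density in \cref{eq:wickexp} is a unit-mean Wick exponential, and hence $\E[\mu_{\beta,\eps}(A)] = |A|$ for every $\eps > 0$. To pass to the limit I would combine the convergence in probability from \cref{lem:mollification-of-log-correlated-field}(ii) with uniform integrability of $\{\mu_{\beta,\eps}(A)\}_\eps$; this last ingredient follows from a uniform $L^p$ bound for some $p \in (1, 2/\beta^2)$, which is a non-empty range when $\beta < \sqrt{2}$ and is part of the standard martingale construction of the chaos measure (Kahane's method). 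This upgrade yields $L^1$ convergence and hence the identity.

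The negative moments are the main work. The goal is to establish $\sup_\eps \E[\mu_{\beta,\eps}(A)^{-q}] < \infty$ for every $q > 0$, after which Fatou combined with convergence in probability transfers the bound to $\mu_\beta(A)$. I would proceed via a multiscale decomposition of $X$ into independent Gaussian contributions localized at dyadic scales, together with the (near-)self-similarity of the log-correlated structure. Concretely, for a fixed sub-interval $I \subset A$ of length $r$, one derives a lower bound of the form $\mu_{\beta,\eps}(I) \geq c\, r^{1+\beta^2/2} e^{\beta \xi}\, \tilde\mu$, where $\xi$ is Gaussian with variance only logarithmic in $1/r$ and $\tilde\mu$ is an independent rescaled copy of the chaos. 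Iterating the recursion produces sub-Gaussian lower tails for $\log \mu_{\beta,\eps}(I)$, which in turn give finite exponential moments of $-\log \mu_{\beta,\eps}(I)$, and therefore all negative moments. An equivalent route is to apply Kahane's convexity inequality to compare with an exactly scale-invariant field, where the negative moments satisfy a Mandelbrot-cascade fixed-point equation that can be analyzed directly.

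The main obstacle is this last step. In contrast to the positive moments, which follow relatively cleanly from the martingale construction, the size of $\mu_\beta(A)$ near zero is governed by the ``thick point'' structure of the log-correlated field; crude Gaussian concentration applied to $\log \mu_{\beta,\eps}$ does not suffice, and one genuinely needs the multiplicative/cascade structure of the GMC measure to rule out degeneracy. This is where most of the technical effort would go.
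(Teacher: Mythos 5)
Your proposal is correct in substance, but it does considerably more than the paper, which offers no proof of this statement at all: \cref{thm:GMC-moments} is recorded as a known fact, with the positive moments attributed to \cite[Theorem 2.11]{rhodes2014} and the finiteness of all negative moments likewise taken from the literature (the paper later invokes it together with Kahane's convexity inequality and a reference to \cite{astala-et-al} to get uniformity in $\eps$). Your reconstruction of the standard arguments is sound: the Fubini computation at the mollified level does give $\E[\mu_{\beta,\eps}(A)]=|A|$, and uniform integrability via a uniform $L^p$ bound for some $p\in(1,2/\beta^2)$ (which for the convolution approximation is itself most easily obtained from Kahane's convexity inequality rather than directly from the martingale construction) upgrades the convergence to $L^1$; your multiscale/cascade sketch for the negative moments, with the scaling relation $\mu(rI)\stackrel{d}{=}r^{1+\beta^2/2}e^{\beta\Omega_r}\mu(I)$, $\Omega_r\sim N(0,\log(1/r))$, and Molchan-type iteration yielding lognormal lower tails $\PR(\mu(I)\leq\epsilon)\leq e^{-c(\log(1/\epsilon))^2}$, is exactly how this is done in the literature, and you correctly identify it as the genuinely technical step. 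Two small points of care: the weak-star convergence in \cref{lem:mollification-of-log-correlated-field}(ii) only controls integrals of continuous test functions, so passing to $\E[\mu_\beta(A)]=|A|$ for arbitrary Borel $A$ requires first treating intervals (or continuity sets) and then a monotone class argument; and for the negative moments one should fix a nondegenerate subinterval $I\subset A$ and use $\mu_{\beta,\eps}(A)\geq\mu_{\beta,\eps}(I)$ before invoking the scaling recursion. What your route buys over the paper's citation is self-containedness; what the citation buys is brevity, since all three claims are standard for subcritical GMC.
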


In the present paper we shall restrict ourselves to the so-called $L^2$-range, where $\beta <\sqrt{d}=1$, and this will be our standing assumption from now on.

Note that by our definitions we have that
\begin{equation*}
    d\mu_{\beta,\varepsilon}(y) = e^{\diamond X_{\beta,\varepsilon}(y)}dy,
\end{equation*}
where $e^{\diamond X_{\beta,\varepsilon}}$ is the Wick exponential of the Gaussian field $X_{\beta,\varepsilon} = \beta X_{\varepsilon}$. Thus, it is natural to make use of the notation
\begin{equation*}
    e^{\diamond X_\beta} \textcolor{black}{\, dy} := d\mu_\beta.
\end{equation*}
Since $X_\beta$ is centered (and hence $X_\beta$ and $-X_\beta$ have same distribution), we obtain that similarly $e^{\diamond (-X_\beta)}$ appearing formally in our equation corresponds to a chaos measure equivalent in law to $d\mu_\beta$.

\section{Unique solvability of non-Wick equation}\label{sec:non-wick}
In this section we present our main results concerning the problem with the scalar product. That is, we prove the existence and uniqueness of the solution to the problem
\begin{align} \label{eq:non-wick-equation-general-problem}
    \begin{cases}
        -e^{\EX [ X_\beta(t)^2]}(e^{\diamond (-X_\beta(t)\textcolor{black}{)}}\cdot U'(t)' & = f(t) \qquad t \in (0,T) \\
        \textnormal{boundary data}
    \end{cases}
\end{align}
and the representation formula of the solution. 
Above $X_\beta$ is the log-correlated Gaussian field with a parameter $\beta < 1$, $f \colon [0,T] \to \R$ is a deterministic, integrable function, \textcolor{black}{and $e^{\diamond (-X_\beta(t))}$ formally denotes the point mass of the GMC measure (constructed from $-X_\beta(t)$ via symmetry)}. As $X_\beta$ is a generalized random field with undefined variance, we proceed through approximation $X_{\beta,\varepsilon}$ \textcolor{black}{in the above equation,} and throughout (see, e.g.~\cref{thm:non-wick-eqn-is-solvable} below) the term $e^{\EX [ X_\beta(t)^2]}$ indicates a normalization $e^{\EX [ X_{\beta,\varepsilon}(t)^2]}$ in the approximating equation. More precisely, we will always interpret the solutions via the following definition.
\begin{definition}\label{defn:soln-of-non-wick-sde}
    Let $X_\eps$ be \textcolor{black}{the} convolution approximation of the log-correlated field $X$ on $[0,T]$ so that $d\mu_{\beta,\eps} \to d\mu_\beta$ in probability in the space of Radon measures as $\eps \to 0$. Let $U_\eps$ be the corresponding solution of the problem
    \begin{align*}
        \begin{cases}
            -e^{\EX [ X_{\beta,\varepsilon}(t)^2]}(e^{\diamond (-X_{\beta,\varepsilon}(t))}\cdot U_\varepsilon'(t))' & = f(t) \qquad t \in (0,T) \\
            \textnormal{boundary data for } U_\varepsilon.
        \end{cases}
    \end{align*}
    We say that the random process $U$ is a solution to~\cref{eq:non-wick-equation-general-problem} if $U_\eps \stackrel{P}{\to} U$ as $\eps \to 0$ in the space $C([0,T])$.
\end{definition}
\begin{remark}\label{rem:solution-uniqueness}
    Note that the solution $U_\eps$ is either unique or unique up to an additive constant, depending on the  boundary data used. \textcolor{black}{Note also that while a priori the definition does not provide a unique limiting solution $U$ since the limit of $U_\eps$ might depend on the chosen mollifier, it turns out that the solution is unique. Indeed, this follows from the fact that $d\mu_{\beta,\eps} \to d\mu_\beta$ independently of the chosen mollifier. }
\end{remark}

We are now in place to state the solvability result for problem~\cref{eq:non-wick-equation-general-problem}. Recall that below, all the boundary terms should be interpreted to hold for the approximating sequences and solutions. For example in the periodic case, condition $e^{\diamond (-X_\beta(0))}U'(0)=e^{\diamond (-X_\beta(T))} U'(T)$ is interpreted as $e^{\diamond (-X_{\beta,\varepsilon}(0))}U_\varepsilon'(0)=e^{\diamond (-X_{\beta,\varepsilon}(T))} U_\varepsilon'(T)$, for all $\varepsilon>0$.

\begin{thm}\label{thm:non-wick-eqn-is-solvable}
    Let $f \colon [0,T] \to \R$ be a deterministic, integrable function, and let $X_\beta=\beta X$, where $X$ is a log-correlated Gaussian field on $[0,T]$, and $\beta < 1$. Consider the equation
    \begin{equation*}
        -e^{\EX [X_\beta(t)^2]}(e^{\diamond (-X_\beta(t))}\cdot U'(t))' = f(t), \qquad t \in (0,T).
    \end{equation*}
    Denote $F(t) = -\int_0^t f(s) ds$, then the above equation with:
    \begin{enumerate}
        \item\label{item:non-wick:initial} \emph{Initial data}: $U(0)=U_1$ and $e^{\EX [X_\beta^2]}e^{\diamond (-X_\beta(0))}\cdot U'(0) = U_2$, where $U_1$ and $U_2$ are deterministic constants, the problem admits a unique solution $U \in L^2(\Omega; \textcolor{black}{C([0,T]))}$ satisfying~\cref{defn:soln-of-non-wick-sde}. Moreover, this solution is given by
              \begin{equation*}
                  U(t) = U_1 + \int_{0}^t \left( U_2 + F(s)  \right) e^{\diamond X_\beta(s)} \, ds, \qquad t \in (0,T).
              \end{equation*}
        \item\label{item:non-wick:dirichlet} \emph{Dirichlet data}: $U(0)=U_1, U(T)=U_2$, where $U_1$ and $U_2$ are deterministic constants, the problem admits a unique solution $U \in L^2(\Omega; \textcolor{black}{C([0,T]))}$ satisfying~\cref{defn:soln-of-non-wick-sde}. Moreover, this solution is given by
              \begin{equation*}
                  U(t) = U_1 + \int_{0}^t (\kappa + F(s)) e^{\diamond X_\beta(s)} \, ds, \qquad t \in (0,T),
              \end{equation*}
              where the random variable $\kappa$ is given by
              \begin{equation*}
                  \kappa = \left(U_2-U_1 - \int_{0}^T F(s) e^{\diamond X_\beta(s)} \, ds \right)\cdot \left(\int_{0}^T e^{\diamond X_\beta(s) }\, ds\right)^{-1},
              \end{equation*}
              with $\kappa$ almost surely bounded.
        \item\label{item:non-wick:neumann} \emph{Neumann boundary conditions}: $e^{\EX [X_\beta^2]} e^{\diamond (- X_\beta(0))} \cdot U'(0) = U_1$ and $e^{\EX [X_\beta^2]}e^{\diamond (-X_\beta(T))} \cdot U'(T) = U_2$, where $U_1$ and $U_2$ are deterministic constants, the problem admits a unique (up to additive random variable) solution $U \in L^2(\Omega; \textcolor{black}{C([0,T]))}$ satisfying~\cref{defn:soln-of-non-wick-sde}. Moreover, this solution is given by
              \begin{equation*}
                  U(t) = \int_{0}^t \left(U_1 + F(s) \right) e^{\diamond X_\beta(s)} \, ds, \qquad t \in (0,T).
              \end{equation*}
        \item\label{item:non-wick:periodic} \emph{Periodic boundary conditions}: $U(0) = U(T), e^{\diamond (-X_\beta(0))} U'(0)=e^{\diamond (-X_\beta(T))} U'(T)$, assuming that $\int_{0}^T f(s) \, ds=0$, the problem admits a unique (up to additive random variable) solution $U \in L^2(\Omega; \textcolor{black}{C([0,T]))}$ satisfying~\cref{defn:soln-of-non-wick-sde}. Moreover, this solution is given by
              \begin{align*}
                  U(t) =  \int_{0}^t (\kappa + F(s)) e^{\diamond X_\beta(s)} \,ds, \qquad t \in (0,T),
              \end{align*}
              and the random variable $\kappa$ is given by
              \begin{align*}
                  \kappa = -\left(\int_{0}^T F(s) e^{\diamond X_\beta(s) \, ds}\right) \cdot \left(\int_{0}^T e^{\diamond X_\beta(s)}\,ds\right)^{-1},
              \end{align*}
              with $\kappa$ almost surely bounded.
    \end{enumerate}
    We recall that $e^{\diamond X_\beta(s)} \, ds$ denotes the Gaussian multiplicative chaos measure.
\end{thm}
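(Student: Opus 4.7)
The plan is to follow Definition \ref{defn:soln-of-non-wick-sde} and construct $U$ as the limit of the $\eps$-regularized solutions $U_\eps$, which can be found by direct integration because $X_{\beta,\eps}$ is smooth. The crucial algebraic identity, valid for any centered Gaussian $Z$, is
\[
e^{\EX[Z^2]} e^{\diamond(-Z)} \;=\; e^{\EX[Z^2]} e^{-Z - \EX[Z^2]/2} \;=\; e^{-Z + \EX[Z^2]/2} \;=\; \bigl(e^{\diamond Z}\bigr)^{-1},
\]
so that the prefactor $e^{\EX[X_{\beta,\eps}^2]}$ is exactly what turns the coefficient of $U'_\eps$ into the reciprocal of the Wick exponential of $X_{\beta,\eps}$.

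At each $\eps > 0$ the coefficient $e^{\diamond(-X_{\beta,\eps})}$ is continuous and almost surely bounded away from $0$ and $\infty$ on $[0,T]$, so the regularized equation is a standard ODE. Integrating once from $0$ to $t$ and applying the identity above yields, in all four cases,
\[
U'_\eps(t) \;=\; (\kappa_\eps + F(t))\, e^{\diamond X_{\beta,\eps}(t)},
\]
where $\kappa_\eps$ is a (possibly random) constant in $t$ determined by the boundary datum at $0$: $\kappa_\eps = U_2$ for the IVP, $\kappa_\eps = U_1$ for the Neumann problem, and $\kappa_\eps$ an unknown to be fixed by the second boundary condition in the Dirichlet and periodic cases. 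A second integration with the correct value at $t=0$ produces the $\eps$-level candidate matching the formulas in the theorem, with $X_\beta$ replaced by $X_{\beta,\eps}$. The remaining boundary datum then either determines $\kappa_\eps$ explicitly as the claimed quotient (Dirichlet, and periodic after imposing $U_\eps(0)=U_\eps(T)$), or yields a solvability condition and leaves an additive constant in $U_\eps$ free (Neumann compatibility $U_2 - U_1 = -\int_0^T f$; periodic additionally $\int_0^T f = 0$).

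Passage to the limit $\eps \to 0$ is then handled via Lemma \ref{lem:mollification-of-log-correlated-field}(ii): since $F \in C([0,T])$ (as $f \in L^1$) and $d\mu_{\beta,\eps} \to d\mu_\beta$ in probability in the weak-$\star$ topology, the fixed-endpoint quantities $\int_0^T F(s)\, d\mu_{\beta,\eps}(s)$ and $\mu_{\beta,\eps}([0,T])$ converge in probability to their GMC limits. By Theorem \ref{thm:GMC-moments} (applicable since $\beta < 1$) the limit $\mu_\beta([0,T])$ admits a finite second moment and all negative moments, hence is a.s.\ strictly positive; this yields $\kappa_\eps \to \kappa$ in probability together with the almost sure boundedness of $\kappa$ in the Dirichlet and periodic cases.

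The main technical obstacle is upgrading this to the convergence $U_\eps \to U$ in $C([0,T])$ in probability demanded by Definition \ref{defn:soln-of-non-wick-sde}, since weak-$\star$ convergence of Radon measures only gives convergence of $\int_0^t \cdot\, d\mu_{\beta,\eps}$ for each fixed $t$. I would extract a subsequence along which the weak-$\star$ convergence holds almost surely, and then exploit that the cumulative distribution functions $t \mapsto \mu_{\beta,\eps}([0,t])$ are monotone non-decreasing and converge pointwise to the almost surely continuous (atomless) limit $t \mapsto \mu_\beta([0,t])$; a Pólya-type argument upgrades this to uniform convergence on $[0,T]$. For the $F$-weighted integrals one approximates $F$ uniformly by step functions and reduces to the monotone case. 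Finally, $L^2(\Omega; C([0,T]))$-membership of $U$ follows from the explicit formula via Cauchy-Schwarz and the finite positive moments of $\mu_\beta([0,T])$ up to order $2/\beta^2 > 2$, and uniqueness (up to the additive random constant in the Neumann and periodic cases) follows since $\mu_\beta$ is independent of the mollifier $\theta$.
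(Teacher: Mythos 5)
Your proposal is correct and follows the same overall architecture as the paper: regularize the field, solve the smooth ODE by two integrations using the identity $e^{\EX[Z^2]}e^{\diamond(-Z)}=(e^{\diamond Z})^{-1}$, read off $\kappa_\eps$ from the boundary data, and pass to the limit via the weak-star convergence in probability of $d\mu_{\beta,\eps}$ to $d\mu_\beta$, with the $L^2(\Omega;C([0,T]))$ membership and uniqueness handled exactly as in the paper. The one place where you genuinely diverge is the key technical step of upgrading weak-star convergence to convergence of $t\mapsto\int_0^t h\,d\mu_{\beta,\eps}$ in $C([0,T])$ in probability: the paper proves a self-contained lemma (Lemma~\ref{lemma:C-convergence}) giving a direct quantitative estimate in probability by majorizing indicators with a tent-function partition of unity, whereas you extract a subsequence along which the weak-star convergence holds almost surely (legitimate, since the weak-star topology on Radon measures over a compact interval is metrizable), invoke a P\'olya/Dini-type argument for the monotone functions $t\mapsto\mu_{\beta,\eps}([0,t])$ converging pointwise to the a.s.\ continuous limit (non-atomicity of $\mu_\beta$ being the essential input in both proofs), and handle the weight $F$ by uniform step-function approximation controlled by the total masses $\mu_{\beta,\eps}([0,T])$. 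Both routes are valid; yours leans on a classical theorem and the subsequence characterization of convergence in probability, while the paper's is more elementary and quantitative (it produces an explicit high-probability modulus), but they deliver the same conclusion.
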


\begin{remark}
    We remark that although we have restricted ourselves to the so-called $L^2$-range of the Gaussian multiplicative chaos, i.e. $\beta^2 < 1$, the above~\cref{thm:non-wick-eqn-is-solvable} remains valid in the full range $\beta^2 < 2$. The key modification in the statement in this case is that the solution would belong only to  $L^p(\Omega; \textcolor{black}{C([0,T]))}$ for certain $p<2$. This is a consequence of the fact that for $\beta\geq 1$, the  Gaussian multiplicative chaos would have fewer moments, see~\cref{thm:GMC-moments} above.
\end{remark}

Before moving on to the proof of~\cref{thm:non-wick-eqn-is-solvable}, we make the following remarks on the sample path continuity of the solution, see also~\cref{rem:wick-soln-cont} below.

\begin{remark}\label{rem:non-wick-continuity}
    Note that from the representations of solutions, we observe that the solutions are in fact almost surely H\"older continuous of certain order, depending on the parameter $\beta$. This follows immediately from the solution formulas and the fact that the chaos measure $d\mu_\beta$ have almost sure modulus of continuity:
    \begin{equation*}
        \mu_\beta([s,t]) \leq C|t-s|^{\eta},
    \end{equation*}
    where $\eta= \eta(\beta)>0$ and $C$ is an almost surely finite random constant. For this fact, see~\cite[Proposition 2.5]{Bertacco2022} and the references therein.
\end{remark}

\begin{proof}[Proof of~\cref{thm:non-wick-eqn-is-solvable}]

    We prove the \textcolor{black}{t}heorem in the case of periodic boundary values, while the other cases can be proved with similar arguments. Let now $f \in L^1([0,T])$ be a mean-zero function and consider
    \begin{align*}
        -e^{\EX [X_\beta(t)^2]}(e^{\diamond (-X_\beta(t))} \cdot U'(t)  )' = f(t), \qquad t \in (0,T),
    \end{align*}
    with periodic boundary conditions
    \begin{align}
        U(0)                                   & =U(T) \nonumber \\
        e^{\diamond (-X_\beta(0))} \cdot U'(0) & =
        e^{\diamond (-X_\beta(T))} \cdot U'(T). \label{eq:periodic-cond-on-derivative}
    \end{align}

    \textit{Step 1. Approximation of the field $X$ with $X_\eps$.}

    We use~\cref{lem:mollification-of-log-correlated-field} to obtain a convolution approximation of the log-correlated field $X_\beta$, denoted by $X_{\beta, \eps}$, with $\eps > 0$, and we denote by $U_\eps$ the unknown function in the corresponding mollified differential equation.

    \textit{Step 2. Solving the regularized equation.}
    Now as the field $X_{\beta,\eps}$ is well-defined and smooth, directly integrating (recall that $\EX [X_{\beta,\eps}(t)^2]$ is independent of $t$) the equation we have, for fixed $\omega \in \Omega$,
    \begin{align*}
        e^{\diamond (-X_{\beta, \eps}(t))} U_\eps'(t) - e^{\diamond (-X_{\beta, \eps}(0))} U_\eps'(0) = e^{-\EX [X_{\beta, \eps}^2]} F(t), \qquad t \in (0,T)
    \end{align*}
    where we denote $F(t) := -\int_{0}^t f(s) \, ds$. We note that we see here that~\cref{eq:periodic-cond-on-derivative} is satisfied precisely when $f$ is mean-zero on $[0,T]$.

    ~\cref{def:wick-exponential} yields \textcolor{black}{for any centered Gaussian $Z$ that
    \begin{align*}
        e^{\diamond (-Z)} &= e^{-Z - \EX(-Z)^2 /2} = e^{-Z - \EX Z^2 /2 + \EX Z^2 /2 - \EX Z^2 /2 } \\ 
        &= \left( e^{Z - \EX Z^2 /2} \right)^{-1} \left( e^{\EX Z^2}  \right)^{-1}  = \left(e^{\diamond Z}  \right)^{-1}  \left( e^{\EX Z^2}  \right)^{-1}.
    \end{align*}
        }
    \textcolor{black}{Hence, for any} convolution approximation $X_{\beta,\eps}$ of $X_\beta$ it holds
    \begin{align*}
        e^{\diamond (-X_{\beta,\eps}(t))} = \frac{1}{e^{\EX [X_{\beta,\eps}(t)^2]} e^{\diamond X_{\beta,\eps}(t)}}
        , \qquad t \in (0,T).
    \end{align*}

    Using the above identity, after rearranging gives
    \begin{align*}
        U_\eps'(t) = \left( \frac{1}{e^{\diamond X_{\beta,\eps}(0)}} U_\eps'(0) + F(t) \right)e^{\diamond X_{\beta,\eps}(t)}, \qquad t \in (0,T).
    \end{align*}

    Integrating this expression up to $t \in (0,T)$, \textcolor{black}{we obtain
    \begin{align*}
        U_\eps(t) = U_\eps(0) + \int_0^t \left( \frac{1}{e^{\diamond X_{\beta,\eps}(0)}} U_\eps'(0) + F(s) \right)e^{\diamond X_{\beta,\eps}(s)} \, ds.
    \end{align*}
   The boundary condition $U_\eps(0) = U_\eps(T)$ now implies that
    \begin{align*}
         \int_0^T \frac{1}{e^{\diamond X_{\beta,\eps}(0)}} U_\eps'(0) e^{\diamond X_{\beta,\eps}(s)} \, ds + \int_0^T  F(s) e^{\diamond X_{\beta,\eps}(s)} \, ds= 0,
    \end{align*}
    and therefore we may write the solution as} 
    \begin{align*}
        U_\eps(t) = U_\eps(0) + \int_{0}^t \left(\kappa_\eps  + F(s) \right) e^{\diamond X_{\beta,\eps}(s)} \, ds,
    \end{align*}
    where the random constant $\kappa_\eps$ is 
    \begin{align}
        \kappa_\eps := \frac{1}{e^{\diamond X_{\beta,\eps}(0)}} U_\eps'(0) =- \frac{\int_{0}^T F(s) e^{\diamond X_{\beta,\eps}(s) }\,ds}{\int_{0}^T e^{\diamond X_{\beta,\eps}(s)}\, ds}.
    \end{align}

    \textit{Step 3. Convergence of the approximate solution.}
    In Step 2 we obtained the solution formula for the approximated equation as
    \begin{align}\label{eq:non-wick-approx-solution-periodic}
        U_\eps(t) = U_\eps(0) + \int_{0}^t \Big( F(s) + \kappa_\eps\Big) e^{\diamond X_{\beta, \eps}(s)}\, ds, \qquad t \in (0,T)
    \end{align}
    with $\kappa_\eps =- \frac{\int_{0}^T F(s) e^{\diamond X_{\beta,\eps}(s) }\,ds}{\int_{0}^T e^{\diamond X_{\beta,\eps}(s) }\, ds}$, for any $\eps > 0$.	Here
    \begin{equation*}
        |\kappa_\eps| \leq \sup_{s\in[0,T]}|F(s)| \leq \int_0^T |f(s)|ds  < \infty.
    \end{equation*}
    In order to complete the proof, we need the following result.
    \begin{lemma}\label{lemma:C-convergence}
        Assume that $\mu_n, \mu$ ($n\geq1$) are random positive measures on $[0,1]$ such that
        $\mu_n\to\mu$ in probability $($with respect to the weak star convergence of measures on $[0,1])$. Assume that $\mu$ is almost surely non-atomic. Let $h\in C([0,1])$ and write
        $f(x):=\int_0^xh(y)d\mu(y)$, and $f_n(x):=\int_0^xh(y)d\mu_n(y)$. Then $f_n\to f$ in probability in the space $C([0,1])$.
    \end{lemma}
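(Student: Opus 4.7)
The plan is to reduce to a deterministic almost-sure statement via the subsequence principle. Recall that convergence in probability in a metric space is equivalent to the following: every subsequence has a further subsequence converging almost surely. Since $\mu_n \to \mu$ in probability in the weak-$*$ topology (which is metrizable on bounded subsets of $M([0,1])$), for any given subsequence of $(f_n)$ I can extract a further subsequence along which $\mu_n(\omega) \to \mu(\omega)$ weakly-$*$ for almost every $\omega$, and along which $\mu(\omega)$ remains non-atomic for almost every $\omega$. Fixing such a realization, the task reduces to the deterministic claim: if positive measures $\mu_n$ on $[0,1]$ converge weakly-$*$ to a non-atomic positive measure $\mu$, and $h \in C([0,1])$, then $x\mapsto\int_0^x h\,d\mu_n$ converges to $x\mapsto\int_0^x h\,d\mu$ uniformly on $[0,1]$.

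The first step will be to upgrade weak-$*$ convergence to uniform convergence of the distribution functions $F_n(x) := \mu_n([0,x])$ and $F(x) := \mu([0,x])$. Non-atomicity of $\mu$ ensures that $F$ is continuous on $[0,1]$. For each fixed $x$, I sandwich $\mathbf{1}_{[0,x]}$ between two continuous functions that differ only on a small neighborhood of $x$; non-atomicity makes the $\mu$-integral of the gap arbitrarily small, which combined with weak-$*$ convergence gives $F_n(x) \to F(x)$ pointwise. Since the $F_n$ are non-decreasing and $F$ is uniformly continuous on the compact interval, a standard P\'olya-type argument (partition $[0,1]$ so that $F$ oscillates by less than $\varepsilon$ on each piece, apply pointwise convergence at the finitely many partition points, and interpolate using monotonicity of $F_n$) yields $\|F_n - F\|_\infty \to 0$.

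The second step transfers uniform convergence of distribution functions to uniform convergence of the integrals. Given $\varepsilon > 0$, use uniform continuity of $h$ to choose a partition $0 = x_0 < x_1 < \cdots < x_k = 1$ such that the step function $h_\varepsilon := \sum_{i=1}^k h(x_{i-1}) \mathbf{1}_{[x_{i-1}, x_i)}$ satisfies $\|h - h_\varepsilon\|_\infty \le \varepsilon$. Then $\int_0^x h_\varepsilon \, d\mu_n$ is a finite linear combination of values of $F_n$ at the partition points and at $x$ itself, so by Step~1 it converges to $\int_0^x h_\varepsilon \, d\mu$ uniformly in $x$. The error from replacing $h_\varepsilon$ by $h$ is controlled by $\varepsilon(\mu_n([0,1]) + \mu([0,1]))$, which is uniformly bounded as $n \to \infty$ since $\mu_n([0,1]) \to \mu([0,1])$ by applying the weak-$*$ convergence to the constant function $1$. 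Sending $\varepsilon \to 0$ then gives the desired uniform convergence.

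The main obstacle will be Step~1: weak-$*$ convergence alone does not control the discontinuous functional $\mu \mapsto \mu([0,x])$, and this is precisely where the non-atomicity hypothesis is indispensable---without it, $F_n$ could oscillate in neighborhoods of the atoms of the limit and uniform convergence would fail. Everything else is a mechanical combination of monotonicity of $F_n$, uniform continuity of $F$ and of $h$, and the step-function approximation of $h$.
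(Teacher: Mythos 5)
Your proof is correct, but it follows a genuinely different route from the paper's. You reduce the statement to a deterministic lemma via the subsequence principle (every subsequence of a sequence converging in probability in a metrizable topology has a further a.s.\ convergent subsequence), and then prove the deterministic claim by upgrading weak-$*$ convergence to uniform convergence of the distribution functions $F_n(x)=\mu_n([0,x])$ (a P\'olya-type argument, with non-atomicity supplying continuity of the limit $F$) followed by a step-function approximation of $h$. The paper instead argues directly in probability, with no passage to subsequences: it fixes a scale $1/N$, builds a partition of unity of tent functions $\psi_k$, introduces high-probability events on which $\int\psi_k\,d\mu_n$ and $\int h\psi_k\,d\mu_n$ are within $N^{-2}$ of their $\mu$-counterparts, and controls everything by the random modulus $\varepsilon(\omega)=\sup_{|x-y|\le 1/N}\mu([x,y])$, which tends to $0$ in probability by non-atomicity. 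Both arguments use non-atomicity in the same essential place (to control the $\mu$-mass of short intervals, equivalently to make $F$ uniformly continuous); the paper's version is quantitative and self-contained, while yours is more modular and leans on two standard facts --- metrizability of weak-$*$ convergence on positive measures over a compact interval (fine here, since weak-$*$ convergent sequences of positive measures are automatically bounded in total mass, e.g.\ by testing against the constant function $1$) and P\'olya's theorem. Two small points to tidy up in a written version: when you express $\int_0^x h_\varepsilon\,d\mu_n$ through values of $F_n$, atoms of $\mu_n$ sitting at partition points produce correction terms of the form $\sum_i\bigl(c_i-c_{i+1}\bigr)\mu_n(\{x_i\})$, which you should observe are $O(\varepsilon\,\mu_n([0,1]))$ because adjacent step values differ by at most the oscillation of $h$ on a partition cell; and pointwise convergence $F_n(0)\to F(0)=0$ at the left endpoint needs the same sandwich argument to rule out mass of $\mu_n$ accumulating at $\{0\}$. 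Neither issue affects the validity of the argument.
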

    \begin{proof}[Proof of Lemma \ref{lemma:C-convergence}]
        Denote $A:=\|h\|_\infty$. Fix an integer $N\geq 2$ and choose a continuous partition of unity of tent-functions $\{\psi_k\,: \;,k=0,\ldots, 2N\}$ by setting $\psi_k(x)=\max (0, 1-2N |x-k(2N)^{-1}|)$, $x\in [0,1]$. Let  $\varepsilon=\varepsilon(\omega):=\sup_{|x-y|\leq 1/N} \mu([x,y])$. Set also
        \begin{equation*}
            A_{k,N} = \left\{\omega: \bigg| \int_0^1 \psi_k(y)d\mu_n(y)- \int_0^1 \psi_k(y)d\mu(y)\bigg|\leq  N^{-2}\right\}
        \end{equation*}
        and
        \begin{equation*}
            B_{k,N} =\left\{\omega: \bigg| \int_0^1 h(y)\psi_k(y)d\mu_n(y)- \int_0^1 h(y)\psi_k(y)d\mu(y)\bigg|\leq  N^{-2}\right\}.
        \end{equation*}
        By the assumption we may choose $n_0$ so large that for $n\geq n_0$ it holds,
        for any $k\in\{0,\ldots 2N\}$ and $n\geq n_0 $, that
        \begin{equation}
            \label{eq:tapahtuma}
            \PR\big(A_{k,N}\cap B_{k,N} \big) \geq 1 - \frac{1}{N(2N+1)}.
        \end{equation}
        Assume that $n\geq n_0$ and that the events $A_{k,N},B_{k,N}$ happen for all $k \in \{0,\ldots,2N\}$, which occurs at least with probability $1-1/N$, and let $x,y\in [0,1]$ be arbitrary such that $|x-y|\leq 1/N$. \textcolor{black}{As we have assumed that the event $A_{k,N}$ occurs for all $k \leq 2N$, and since} the measures $\mu_n, \mu$ are positive\textcolor{black}{,} we may majorize $\chi_{[x,y]}$ by using at most three of the functions
        $\psi_k$, \textcolor{black}{and thus} it follows that
        \begin{equation*}
            \mu_n([x,y])\leq 3(\varepsilon+1/N^2).
        \end{equation*}
        More generally,
        for any $x,y\in [0,1]$ we may express  $\chi_{[x,y]}$ as a sum of at most $2N+1$ of the functions $\psi_k$ and, in addition, two halves of a single tent function $\psi_k$. Then clearly
        \begin{align*}
            |f_n(x)-f(x)|&\leq (2N+1)N^{-2}+ 2A\big(\varepsilon + 3(\varepsilon+1/N^2)\big) < 8A\varepsilon+9\max(1,A)/N\\
            &< 9(A+1)(\varepsilon+1/N).
        \end{align*}
        Let next $\delta>0$ be given. \textcolor{black}{Since $\mu$ is a non-atomic measure, we have $\varepsilon \to 0$ in probability as $N\to \infty$. Thus we can} fix $N$ so large that $\PR\big(9(A+1)\varepsilon <\delta/2\big)\geq 1-\delta/2$ and $9(A+1)/N<\delta/2$. Then it follows for $n\geq n_0$  that
        \begin{equation*}
            \PR\big(\|f-f_n\|_{C([0,1])}<\delta\big)\geq 1-\delta/2-1/N>1-\delta.
        \end{equation*}
        This completes the proof of Lemma \ref{lemma:C-convergence}.
    \end{proof}
    We now continue the proof of \textcolor{black}{T}heorem \textcolor{black}{\ref{thm:non-wick-eqn-is-solvable}.} \textcolor{black}{It remains to show that \eqref{eq:non-wick-approx-solution-periodic} defines a solution in the sense of Definition \ref{defn:soln-of-non-wick-sde}.}
    By the weak \textcolor{black}{star} convergence in probability of $d\mu_{\beta,\eps}$ to $d\mu_\beta$, and the fact that $\mu_\beta$ is non-atomic almost surely, $\kappa_\eps \to \kappa$ in probability.
    Since $h$ is continuous, the convergence in probability in the space $C([0,T])$ now follows from~\cref{lemma:C-convergence}. \textcolor{black}{This} shows that we have obtained \textcolor{black}{a solution to problem \eqref{eq:non-wick-equation-general-problem}, and this solution is unique; see Remark \ref{rem:solution-uniqueness} above.} Finally, the fact that $U \in L^2(\Omega;\textcolor{black}{C([0,T]))}$ in cases~\cref{item:non-wick:initial,item:non-wick:neumann} follows directly from the representation formula together with the fact that $h$ is bounded and
    \begin{equation*}
        \sup_{0\leq t \leq T}\mu_\beta([0,t]) = \mu_\beta([0,T]) \in L^2(\Omega),
    \end{equation*}
    since $\beta<1$.
    Similarly, $U \in L^2(\Omega;\textcolor{black}{C([0,T]))}$ in case~\cref{item:non-wick:periodic}, since then $|\kappa| \leq \sup_{0\leq s\leq T}|F(s)|$ almost surely. Then, in case~\cref{item:non-wick:dirichlet} we have
    \begin{equation*}
        \frac{U_2-U_1}{\mu_\beta([0,T])}\int_0^t |F(s)|e^{\diamond X_\beta(s)}ds \leq (U_2-U_1)\sup_{0\leq s\leq T}|F(s)|
    \end{equation*}
    from which similar arguments as above yields $U \in L^2(\Omega;\textcolor{black}{C([0,T]))}$. \textcolor{black}{The uniqueness (or uniqueness up to a random variable as in (iii) and (iv)) of the solution follows from the facts that the approximated equation admits a unique solution, and limits are unique, see Remark \ref{rem:solution-uniqueness}.} This completes the whole proof.
\end{proof}

\section{Unique solvability of Wick equation}\label{sec:wick}
In this section we consider the problem
\begin{align}\label{eq:wick-equation-general-problem}
    \begin{cases}
        -(e^{\diamond (-X_\beta(t))} \diamond U'(t))' & = f(t), \qquad t \in (0,T) \\
        \textnormal{boundary data for } U.
    \end{cases}
\end{align}
We state and prove an existence and uniqueness result for integrable, deterministic functions $f$. Above the symbol $\diamond$ denotes the Wick product, see~\cref{def:wick-prod} below, and $e^{\diamond X_\beta}$ denotes the Wick exponential of $X_\beta$, as in~\cref{def:wick-exponential}. The boundary data can be taken to be initial values, Dirichlet, Neumann or periodic.
%point-wise
Again, as point-wise evaluations do not make sense for the log-correlated field $X_\beta$, we define the solution to the problem in~\cref{eq:wick-equation-general-problem} via approximations of the field using~\cref{lem:mollification-of-log-correlated-field}. Then, we convert this to a family of deterministic equations by taking an $S$-transform (see the definition below). These deterministic equations have unique solutions, from this we get a solution to~\cref{eq:wick-equation-general-problem} by using the invertibility of the $S$-transform.

We first recall some elementary facts about Gaussian $L^2$-spaces, and then define the $S$-transform, one of the main tools of the remainder of the paper.

We recall that a Gaussian linear space $\mathcal{H}$ is a vector space of real-valued random variables, defined on a probability space $(\Omega, \mathcal{F},\PR)$, such that each random variable follows a Gaussian distribution with mean zero. Each $h \in \mathcal{H}$ has finite variance; therefore $\mathcal{H} \subset L^2(\Omega, \mathcal{F},\PR)$. If $\mathcal{H}$ is a closed subspace of $L^2(\Omega, \mathcal{F},\PR)$, we say that $\mathcal{H}$ is a Gaussian Hilbert space. It is well known that if $\mathcal{G} \subset L^2(\Omega, \mathcal{F},\PR)$ is a Gaussian linear space, then the closure of $\mathcal{G}$ is a Gaussian Hilbert space.

\begin{definition}[Gaussian Hilbert space generated by Gaussian families]
    Let $\mathcal{H}$ be a Gaussian Hilbert space. For any collection $\{\xi_\alpha\}$ of centered jointly normal random variables in $\mathcal{H}$, the closed linear span of $\{\xi_\alpha\}$ in $L^2(\Omega,\mathcal{F},\PR)$ is called the Gaussian Hilbert space generated by $\{\xi_\alpha\}$.
\end{definition}

We also recall the basic Wiener-It\^{o} chaos decomposition of $L^2$: For $n \geq 0$, let $\textcolor{black}{\overline{\mathcal{P}}}_n(\mathcal{H})$ be the closure in $L^2(\Omega,\mathcal{F},\PR)$ of the linear space
\begin{equation*}
    \mathcal{P}_n(\mathcal{H}) = \{ p(\xi_1, \ldots,\xi_m): p \textnormal{ is a polynomial of degree } \leq n, \xi_1, \ldots, \xi_m \in \mathcal{H}, m < \infty\},
\end{equation*}
and set $K^n := \overline{\mathcal{P}}_n(\mathcal{H}) \cap \overline{\mathcal{P}}_{n-1}(\mathcal{H})^\perp$, with $K^0 = \overline{\mathcal{P}}_0(\mathcal{H})$ \textcolor{black}{being the space of constants}. The space $K^n$ is called the \textit{n\textsuperscript{th} (homogeneous) chaos}. It is clear from the construction that $K^n \subset L^p(\Omega,\mathcal{F},\PR)$ for any $p\geq 1$. We also remark that, by construction the first chaos $K_1$ is composed of Gaussian random variables. More importantly, we have (see, for instance~\cite[Theorem 2.6]{janson1997}) the orthogonal decomposition
\begin{equation}\label{eq:wiener-ito-decomposition}
    \bigoplus_{n=0}^\infty K^n = L^2\left(\Omega,\mathcal{F}_{\mathcal{H}},\PR\right) =: L^2_{\mathcal{H}}(\Omega),
\end{equation}
where $\mathcal{F}_{\mathcal{H}}$ is the $\sigma$-algebra generated by the Gaussian random variables in $\mathcal{H}$. Actually, the space $L^2_{\mathcal{H}}(\Omega)$ is spanned by the Wick exponential of Gaussian random variables $h\in \mathcal{H}$, see~\cite[Corollary 3.40]{janson1997} and~\cref{def:wick-exponential}. The space $L^p_\mathcal{H}(\Omega)$ for $p\geq 1$ is defined as $\mathcal{F}_{\mathcal{H}}$-measurable random variables that have finite $p$ moment. Also, note that $e^{\diamond h} \in L^p_\mathcal{H}(\Omega)$ \textcolor{black}{for all $h \in \mathcal{H}$} and for all $p\geq 1$. Furthermore, one may prove that $K^n \subset L^p_{\mathcal{H}}(\Omega)$ for all $p,n\geq 1$.
\begin{definition}[Wick product]\label{def:wick-prod}
    Let $Z \in K^m, Y \in K^n$. We define the \emph{Wick product} of $Z$ and $Y$ by
    \begin{equation*}
        Z \diamond Y  = \pi_{m+n}(ZY),
    \end{equation*}
    where $\pi_n$ denotes the orthogonal projection of $L^2_\mathcal{H}(\Omega)$ onto $K^n$.
    The operator $\diamond$ can by bilinearity be extended to a \textcolor{black}{bilinear} operator on the space of all elements of $L_\mathcal{H}^2(\Omega)$ having finite chaos decomposition.

    We define the Wick powers of $Z^{\diamond n}, n=0,1,2,\ldots$ inductively as
    \begin{equation*}
        \begin{cases} Z^{\diamond 0} & = 1,                                                \\
              Z^{\diamond n} & = Z \diamond Z^{\diamond(n-1)}, \qquad n=1,2,\ldots
        \end{cases}
    \end{equation*}
\end{definition}

\begin{definition}[$S$-transform]
    Let $\mathcal{H}$ be a Gaussian Hilbert space, and let $Z\in L_\mathcal{H}^2(\Omega)$.
    We call the mapping $S\textcolor{black}{Z} \colon \mathcal{H} \to \mathbb{R}$ given by
    \begin{equation}
        \label{eq:S-transform-basic}
        (SZ)(h) = \EX \left[Ze^{\diamond h}\right], \quad h \in \mathcal{H},
    \end{equation}
    the $S$-transform of $Z$ evaluated at $h$. Actually, it suffices to assume $Z \in L^p_\mathcal{H}(\Omega)$ for some $p>1$.
\end{definition}
\begin{remark}
    We remark that by the density of \textcolor{black}{\{$e^{\diamond h}, h$\}} in $L_\mathcal{H}^2(\Omega)$, it is a routine task to show that the $S$-transform is a linear, injective map on $L_\mathcal{H}^2(\Omega)$: that is, if $(SZ_1)(h)=(SZ_2)(h)$ for all $h \in \mathcal{H}$, then $Z_1 = Z_2$ almost surely. In fact, it is sufficient to have $(SZ_1)(h) = (SZ_2)(h)$ for all $h\in \mathcal{A}$, where $\mathcal{A}$ is a dense subset of $\mathcal{H}$. For details, see~\cite[Theorem 16.11, Corollary 3.40]{janson1997}.
\end{remark}
The Wick product acts like an ordinary product under $S$-transform.
\begin{lemma}\label{lem:S-transform-wick-product-scalar-product}
    Let $\mathcal{H}$ be a Gaussian Hilbert space, and $Y \in K^m, Z \in K^n$. Then we have
    \begin{equation*}
        S\left(Y \diamond Z   \right)(h) = S(Y)(h) \cdot S(Z)(h), \qquad \textnormal{for all } h \in \mathcal{H}.
    \end{equation*}
\end{lemma}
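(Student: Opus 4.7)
The strategy is to first verify the identity on Wick exponentials, where both sides can be computed explicitly, and then extend to general $Y \in K^m$, $Z \in K^n$ via polarization and a continuity argument.

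For Wick exponentials I would use two ingredients. First, for any $\xi \in \mathcal{H}$, the direct calculation $e^{\diamond \xi}\,e^{\diamond h} = e^{\E[\xi h]}\,e^{\diamond(\xi+h)}$ (the same computation as in the supplementary remark above) combined with $\E[e^{\diamond W}] = 1$ for centered Gaussian $W$ gives
\[
S(e^{\diamond \xi})(h) \;=\; \E\bigl[e^{\diamond \xi} e^{\diamond h}\bigr] \;=\; e^{\E[\xi h]}.
\]
Second, I would establish the Wick-exponential identity $e^{\diamond \xi}\diamond e^{\diamond \eta} = e^{\diamond(\xi+\eta)}$ for $\xi,\eta \in \mathcal{H}$. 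This follows by expanding both sides as chaos series $e^{\diamond \xi} = \sum_{m\ge 0}\xi^{\diamond m}/m!$ and applying the Wick binomial formula $(\xi+\eta)^{\diamond k} = \sum_{j=0}^k \binom{k}{j}\xi^{\diamond j}\diamond \eta^{\diamond(k-j)}$, which itself is purely algebraic from commutativity and bilinearity of $\diamond$. Combining the two formulas yields the desired identity on Wick exponentials:
\[
S(e^{\diamond \xi}\diamond e^{\diamond \eta})(h) \;=\; e^{\E[(\xi+\eta)h]} \;=\; e^{\E[\xi h]}\,e^{\E[\eta h]} \;=\; S(e^{\diamond \xi})(h)\cdot S(e^{\diamond \eta})(h).
\]

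For the extension to arbitrary $Y \in K^m$, $Z \in K^n$, I would substitute $\xi \mapsto s\xi$ and $\eta \mapsto t\eta$ in the exponential identity, expand both sides as formal power series in $s,t$, and equate the coefficients of $s^m t^n$ to obtain $S(\xi^{\diamond m}\diamond \eta^{\diamond n})(h) = S(\xi^{\diamond m})(h)\cdot S(\eta^{\diamond n})(h)$. By polarization of symmetric multilinear forms on $\mathcal{H}$, the set $\{\xi^{\diamond m}:\xi\in \mathcal{H}\}$ has dense linear span in $K^m$, so linearity extends the claim to the algebraic span. Closure to all of $K^m \times K^n$ then uses that both sides depend $L^2$-continuously on $(Y,Z)$: the map $W \mapsto S(W)(h) = \E[W\, e^{\diamond h}]$ is $L^2$-continuous by Cauchy-Schwarz (since $e^{\diamond h}\in L^2$), and $(Y,Z)\mapsto Y\diamond Z = \pi_{m+n}(YZ)$ is $L^2$-continuous on $K^m \times K^n$ because hypercontractivity makes all $L^p$ norms equivalent on each finite chaos, giving $\|Y_k Z - YZ\|_{L^2}\le \|Y_k-Y\|_{L^4}\|Z\|_{L^4} \to 0$ and continuity of the orthogonal projection $\pi_{m+n}$.

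The main obstacle I expect is this last closure step: ordinary products are not $L^2$-continuous in general, so one cannot naively pass to the limit in $\pi_{m+n}(YZ)$ without invoking the finite-chaos regularity of $Y$ and $Z$. The verification of the Wick-exponential identities themselves is essentially a bookkeeping exercise, whereas the extension argument is the conceptual core that ties polarization together with the hypercontractive structure of chaos spaces.
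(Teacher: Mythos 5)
Your proposal is correct. Note that the paper gives no proof of this lemma at all --- it simply defers to Janson's book --- and your argument (computing $S$ on Wick exponentials as a generating function, extracting the coefficient of $s^m t^n$, then extending by polarization, bilinearity, and the hypercontractive $L^2$-continuity of $(Y,Z)\mapsto \pi_{m+n}(YZ)$ on finite chaoses) is essentially the standard proof found in that reference, so there is no divergence of approach to report.
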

\textcolor{black}{For a proof, see \cite{janson1997}.}

The content of~\cref{lem:S-transform-wick-product-scalar-product} can be used as a defining property of a general Wick product.
\begin{definition}[General Wick-product]\label{def:general-wick-product}
    Let $Y,Z \in \bigcup_{q>1} L^q_\mathcal{H}(\Omega)$, and $p>1$. We say that the Wick product of $Z$ and $Y$ exists in $L_\mathcal{H}^p(\Omega)$ if there exists an element $V \in L_\mathcal{H}^p(\Omega)$ such that
    \begin{equation*}
        (SY)(h) \cdot (SZ)(h) = (SV)(h),
    \end{equation*}
    for all $h \in \mathcal{H}$, and we denote $V$ by $Y \diamond Z$.
\end{definition}
It is in general not true that for $Y,Z \in L^p_{\mathcal{H}}(\Omega)$, the Wick product $Y \diamond Z$ is still an element of $L^p_{\mathcal{H}}(\Omega)$ or even $L^1_{\mathcal{H}}(\Omega)$ (or that it even exists), see e.g.~\cite{gjessing1991} or~\cite[Example 16.33]{janson1997}. To overcome this, one can enlarge the space and consider random variables $Y,Z \in (\mathcal{S})_{-1}$, where $(\mathcal{S})_{-1}$ is a stochastic distribution space (Hida space), \textcolor{black}{which we briefly outline next. We will list here only elementary definitions and properties without proofs; for a more thorough treatment, see for instance~~\cite{gjessing1991},~\cite{hu2009}, \cite{holden2009} or~\cite{kondratiev1996} and the references therein.}

\textcolor{black}{
We start by recalling that via the Bochner-Minlos theorem we obtain a probability measure $\gamma$ on $\mathcal{B}(\mathscr{S}'(\R))$, called the one-dimensional white noise probability measure, such that
\begin{align*}
    \EX_\gamma [e^{i\langle \cdot, \varphi\rangle}] = e^{-\frac{1}{2} \norm{\varphi}^2}, \qquad \forall \varphi \in \mathscr{S}(\R).
\end{align*}
Above $\mathscr{S}$ denotes the Schwartz space of rapidly decreasing functions on $\R$, and $\mathscr{S}'$ its topological dual. \textcolor{black}{To construct an explicit basis for $L^2(\gamma)$, one starts with the classical Hermite functions $(\xi_j)_{j=0}^\infty$, which form an orthonormal basis for $L^2(\R)$. A corresponding orthonormal basis for $L^2(\R^d)$ is then given by the tensor products of these functions. Indeed, for each multi-index $k=(k_1, \dots, k_d) \in \mathbb{N}_0^d$, we define
\begin{align*}
    \eta_k(x_1, \dots, x_d) := \xi_{k_1}(x_1) \otimes \dots \otimes \xi_{k_d}(x_d).
\end{align*}
Since the set of multi-indices $\mathbb{N}_0^d$ is countable, we can arrange the basis functions $(\eta_k)_{k \in \mathbb{N}_0^d}$ into a single sequence, which we denote by $(\eta_i)_{i=1}^\infty$.} Moreover, we recall the classical orthogonal Hermite polynomials defined by
\begin{align}\label{eq:hermite-polynomial-defn}
    h_n(x)  = (-1)^n e^{\frac{1}{2}x^2} \frac{d^n}{dx^n} \left(e^{-\frac{1}{2}x^2}   \right), \qquad n=0,1,2,\hdots
\end{align} } 

\textcolor{black}{
Denoting by $\mathcal{J}$ the space of all sequences $\alpha = (\alpha_1, \alpha_2, \hdots)$ with all coordinates $\alpha_j \in \N$ such that $\alpha_j \neq 0$ for only finitely many indices $j$, and setting 
\begin{align}\label{eq:white-noise-basis-1D}
        H_{\alpha}(\omega) := \prod_{i=1}^\infty h_{\alpha_i}(\langle \omega, \eta_i \rangle), \qquad \omega \in \mathscr{S}'(\R)
    \end{align}
yields a basis for $L^2(\gamma)$ (i.e. for \eqref{eq:wiener-ito-decomposition}). Here $h_{\alpha_i}$ is the Hermite polynomial of degree $\alpha_i$ given by~\cref{eq:hermite-polynomial-defn}. 
\textcolor{black}{In the sequel, if $\gamma = (\gamma_1, \dots, \gamma_j, \dots)$, with $\gamma_j \neq 0$ for only finitely many $j$, we write
\begin{align*}
    (2 \N )^\gamma := \prod_j (2j)^{\gamma_j}.
\end{align*}
}
By introducing different summability criteria for the coefficients \textcolor{black}{of the formal series expansion} (than \textcolor{black}{just} the plain $\ell^2$ one) we obtain stochastic test function and distribution spaces as follows:} 
    \begin{definition}[Stochastic test function spaces] \label{def:stochastic-test-function-space}
    For $\rho \in [\textcolor{black}{0},1]$ and $r \in \R_{\textcolor{black}{+}}$,  let $(\mathcal{S})_{\rho,r}$ consist of those $F = \sum_{\alpha \in \mathcal{J}} a_\alpha H_\alpha %\in (\mathcal{S})_{\rho}^N
        \in \mathbf{L}^2(\gamma)$
    such that
    \begin{align*}
        \scnorm{F}_{\rho,r}^2 := \sum_{\alpha \in \mathcal{J}} a_\alpha^2 (\alpha!)^{1+\rho} (2\N)^{r\alpha} < \infty.
    \end{align*}
    %Here $c_\alpha^2 = \sum_{k=1}^N (c_\alpha^{(k)})^2$, for $c_\alpha \in \R^N$.
\end{definition}

\begin{definition}[Stochastic distribution spaces]\label{def:stochastic-distribution-space}
    For $0 \leq \rho \leq 1$ and $q \in \R_{\textcolor{black}{+}}$, let $(\mathcal{S})_{-\rho, -q}$ consist of all formal expansions
    \begin{align*}
        F = \sum_{\alpha} b_\alpha H_\alpha, \qquad b_\alpha \in \R
    \end{align*}
    such that
    \begin{align*}
        \scnorm{F}_{-\rho,-q}^2 := \sum_{\alpha} b_\alpha^2(\alpha!)^{1-\rho} (2\N)^{-q\alpha} < \infty.
    \end{align*}
\end{definition}

\textcolor{black}{We remark that above the parameter $\rho$ essentially measures the 'stochastic smoothness' of $F$: for instance, for $\rho > 0$, the higher-order chaos coefficients $a_\alpha$ need to decay quite rapidly, due to the factorial of $\alpha$. Furthermore, the parameter $r$ can be viewed as acting as a measure of 'analytic smoothness', as in terms of oscillation, given that it weighs the basis 'frequency' with a factor of $2\N$.}

\textcolor{black}{We define for $\rho \in [0,1]$ the space $(\mathcal{S})_{\rho}$ as the projective limit (intersection) of the spaces $(\mathcal{S})_{\rho, r}$ as in~\cref{def:stochastic-test-function-space} with respect to $r \in \R_{\textcolor{black}{+}}$, and similarly $(\mathcal{S})_{-\rho}$ is the inductive limit (union) of the spaces $(\mathcal{S})_{-\rho, \textcolor{black}{-q}}$ as in Definition \ref{def:stochastic-distribution-space}.}

\textcolor{black}{We note that akin to the case of deterministic distribution and test function spaces, we have, in general for $0 \leq \rho \leq 1$ the inclusions 
\begin{align*}
    (\mathcal{S})_{1} \subset (\mathcal{S})_{\rho} \subset (\mathcal{S})_{0} \subset L^2(\gamma) \subset (\mathcal{S})_{-0} \subset (\mathcal{S})_{-\rho} \subset (\mathcal{S})_{-1},
\end{align*}
where each embedding is continuous and dense.}

\textcolor{black}{
\begin{definition}[Wick product]
    For $F$ and $G$, two elements of $(\mathcal{S})_{-1}$, that is,
    \begin{align*}
        F = \sum_{\alpha} a_\alpha H_{\alpha}(\omega), \quad G = \sum_{\beta} b_\beta H_\beta(\omega),
    \end{align*}
    with $a_\alpha, b_\beta \in \R$, the Wick product $F \diamond G$ is defined by
    \begin{align*}
        F \diamond G = \sum_{\alpha, \beta}  a_\alpha  b_\beta H_{\alpha + \beta} (\omega).
    \end{align*}
\end{definition} 
One key advantage of the spaces $(\mathcal{S})_1$ and $(\mathcal{S})_{-1}$ is that they are closed with respect to the Wick product: if $F, G \in (\mathcal{S})_1$, then $F \diamond G \in (\mathcal{S})_1$; similarly if $F, G \in (\mathcal{S})_{-1} $, then $F \diamond G \in (\mathcal{S})_{-1}$. In addition, in the stochastic distribution space $(\mathcal{S})_{-1}$ the Wick product is commutative, distributive and associative.}

To complement the algebraic properties of the Wick product, for any natural number $k$, we define the Wick powers $X^{\diamond k}$ of $X \in (\mathcal{S})_{-1}$ inductively by setting $X^{\diamond 0}=1$ and then
\begin{align*}
    X^{\diamond  k} = X \diamond X^{\diamond (k-1)}, \qquad k > 1.
\end{align*}
We remark that this allows for a generalization to $(\mathcal{S})_{-1}$ of the Wick exponential identity in \eqref{eq:wick-exponential-series} as in Remark \ref{rem:wick-exponential} above. See also \eqref{eq:wick-property4} below for the Wick inverse. \textcolor{black}{In the sequel, we will often write $w(\varphi)$ for the dual pairing of $\langle w, \varphi \rangle$.}

\begin{definition}[Hida test points]
    \label{def:Hida-test}
    Let $\varphi\in \mathscr{S}(\R)$ and $\lambda \in \mathbb{R}$. For given $\lambda$,
    we call the random variables $\lambda w( \varphi)$ the Hida test points.
\end{definition}
The name in~\cref{def:Hida-test} stems from the fact that if $|\lambda|$ is small enough, then
$e^{\diamond \lambda w( \varphi)} \in ((\mathcal{S})_{-1,-q})^\ast = (\mathcal{S})_{1,q}$ and they are dense in $(\mathcal{S})_{1,q}$. Consequently, we can define the $S$-transform that determines random variables $F \in (\mathcal{S})_{-1,-q}$ uniquely:

\begin{definition}[S-transform]\leavevmode
    \label{def:S-transform-Hida}
    \textcolor{black}{\begin{enumerate}
        \item Let $F \in (\mathcal{S})_{-1}$ and let $\varphi \in \mathscr{S}(\R)$. Then the $S$-transform of $F$ at $\lambda w(\varphi)$, is defined for all real numbers $\lambda$ with $\abs{\lambda}$ small enough, by
            \begin{align*}
                (SF)(\lambda w(\varphi)) = \langle F, e^{\diamond \lambda w(\varphi)}\rangle ,
            \end{align*}
            where $\langle \cdot, \cdot \rangle$ denotes the action of $F \in (\mathcal{S})_{-1,-q}$ on $e^{\diamond \lambda w( \varphi)} \in ((\mathcal{S})_{-1,-q})^\ast = (\mathcal{S})_{1,q}$.
        \item Let $F \in (\mathcal{S})_{-\rho}$ for some $\rho < 1$ and let $\varphi \in \mathscr{S}(\R)$. Then the $S$-transform of $F$ at $\lambda w(\varphi)$ is defined by
            \begin{align*}
                (SF)(\lambda w(\phi)) = \langle F, e^{\diamond \lambda w(\varphi)} \rangle
            \end{align*}
            for all $\lambda \in \R$.
    \end{enumerate}}

\end{definition}

From the above definitions follow the expected identity\footnote{Actually, one could equivalently define the $S$-transform first and then define the Wick product through this identity, see e.g.~\cite{kondratiev1996}.}(cf. Lemma \ref{lem:S-transform-wick-product-scalar-product} above)
\begin{equation*}
    (S(Y\diamond Z))(h) = (SY)(h)\cdot (SZ)(h)
\end{equation*}
for suitable test objects $h \in \mathcal{H}$. We note however, that in this case the $S$-transform can be defined only for Gaussian objects $h$ that are small enough in suitable associated norms. More precisely, the choice of the associated norm depends on the subspace $(\mathcal{S})_{-1,-q} \subset (\mathcal{S})_{-1}$ one wishes to characterize by $S$-transform, where we have
\begin{equation*}
    (\mathcal{S})_{-1} = \bigcup_{q} (\mathcal{S})_{-1,-q}.
\end{equation*}
For $(\mathcal{S})_{-1,-q}$, the test points $h$ are Gaussian random variables such that $e^{\diamond h} \in (\mathcal{S})_{1,q}$ (see~\cref{def:S-transform-Hida}). In particular, this holds true if $\EX [h^2]$ is small enough, where the bound depends only on $q$.  In practice, this can be achieved by a multiplication with a small enough constant $\lambda$ and testing on Gaussian points $\lambda h$. Regardless, the $S$-transform (now defined on a restricted class of Gaussian random variables) characterizes random variables in $(\mathcal{S})_{-1,-q}$ uniquely. As a particular example, we note that whenever $Y$ and $Z$ are elements of spaces $L^p_{\mathcal{H}}(\Omega) \subset (\mathcal{S})_{-1}$, the generalized Wick product and $S$-transform equals to the ones given in~\cref{eq:S-transform-basic} and~\cref{def:general-wick-product}. In this case, the $S$-transform can be defined for any test point $h \in \mathcal{H}$ and the $S$-transform characterizes random variables uniquely. In the sequel, for the sake of notational simplicity we always use $h\in \mathcal{H}$ as a test point when taking $S$-transforms. The reader should keep in mind however that one should consider $\lambda h \in \mathcal{H}$ for small enough $\lambda$ if necessary, despite the fact that we have suppressed $\lambda$ from the notation. In what follows, we naturally assume that our log-correlated field is constructed from the white noise, allowing to use white noise analysis techniques. \textcolor{black}{On $\R$, one can set $X = (\Delta)^{-1/4} W$, where $(\Delta)^{-1/4}$ is the fractional Laplacian and $W$ is the white noise. For more details on this type of fractional Gaussian field construction also in more general domains, we refer the reader to the excellent survey \cite{lodhia2016}.}

For the reader's convenience we record some properties of Wick calculus in the next lemma. That is, the Wick product is commutative, associative and distributive, and it reduces to the usual scalar product when one of the factors is deterministic. On top of that, we collect some elementary properties of the Wick exponential to the same lemma. The proofs of the identities below (in the case of $L^p_\mathcal{H}(\Omega)$ random variables) can be found for instance in~\cite{janson1997}.

\begin{lemma}\label{lem:wick-identities}
    Let $X, Y, Z \in (\mathcal{S})_{-1}$. Then the Wick product is commutative, associative, and distributive, i.e.
    \begin{align}
        X \diamond Y              & = Y \diamond X, \nonumber                             \\
        X \diamond (Y \diamond Z) & = (X\diamond Y) \diamond Z, \label{eq:wick-property2} \\
        X \diamond (Y + Z)        & =X\diamond Y + X \diamond Z. \nonumber
    \end{align}
    Moreover, for any deterministic $C$ we have
    \begin{equation}
        X \diamond C = X \cdot C \label{eq:wick-property3}.
    \end{equation}
    Finally, for Gaussian random variables $X$ and $Y$ the Wick exponential satisfies
    \begin{align}
        e^{\diamond(X+Y)}    & =e^{\diamond X} \diamond e^{\diamond Y}  \label{eq:wick-exponential-sum-wick-product}  \\
        e^{\diamond( X + Y)} & = e^{-\EX [XY]} e^{\diamond X} e^{\diamond Y}, \label{eq:wick-exponential-sum-identity}
    \end{align}
\end{lemma}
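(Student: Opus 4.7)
My plan is to reduce every identity to scalar algebra through one of two equivalent characterisations of the Wick product on $(\mathcal{S})_{-1}$: the term-by-term formula $F \diamond G = \sum_{\alpha,\beta} a_\alpha b_\beta H_{\alpha+\beta}$ from the chaos expansion, and the multiplicative identity $S(F \diamond G)(h) = (SF)(h) \cdot (SG)(h)$ combined with injectivity of the $S$-transform on $(\mathcal{S})_{-1}$. The paper already states (right after Definition \ref{def:S-transform-Hida}) that the $S$-transform characterises elements of $(\mathcal{S})_{-1,-q}$ uniquely on suitably restricted Gaussian test points, and that is the only separation argument I need.

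First, I would dispatch the commutativity, associativity \eqref{eq:wick-property2} and distributivity of $\diamond$ by working directly with the chaos expansion: commutativity of addition of multi-indices and of real multiplication, together with absolute rearrangement of formal sums, gives all three identities. (Alternatively, applying $S$ to both sides turns each identity into the corresponding identity for ordinary products of real numbers; injectivity of $S$ closes the argument.) For \eqref{eq:wick-property3}, a deterministic $C \in \R$ corresponds to the expansion $a_0 = C$, $a_\alpha = 0$ for $\alpha \neq 0$, so
\[
X \diamond C \;=\; \sum_\alpha a_\alpha C\, H_{\alpha+0} \;=\; C \cdot X,
\]
which is \eqref{eq:wick-property3}.

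For the two Wick-exponential identities, \eqref{eq:wick-exponential-sum-identity} is a direct pointwise computation: by Definition \ref{def:wick-exponential} and $\EX[(X+Y)^2] = \EX[X^2] + 2\EX[XY] + \EX[Y^2]$ one has
\[
e^{\diamond(X+Y)} \;=\; e^{(X+Y) - \tfrac12\EX[(X+Y)^2]} \;=\; e^{-\EX[XY]}\, e^{\diamond X}\, e^{\diamond Y},
\]
where the products on the right are ordinary pointwise products. For \eqref{eq:wick-exponential-sum-wick-product} I would take $S$-transforms. Using the standard moment generating function identity, for centered jointly Gaussian $X$ and $h$,
\[
(S e^{\diamond X})(h) \;=\; \EX\bigl[e^{\diamond X} e^{\diamond h}\bigr] \;=\; e^{\EX[Xh]}.
\]
Hence
\[
(S e^{\diamond(X+Y)})(h) \;=\; e^{\EX[(X+Y)h]} \;=\; e^{\EX[Xh]}\,e^{\EX[Yh]} \;=\; (Se^{\diamond X})(h)\,(Se^{\diamond Y})(h) \;=\; S(e^{\diamond X} \diamond e^{\diamond Y})(h),
\]
and injectivity of $S$ yields \eqref{eq:wick-exponential-sum-wick-product}.

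The only genuine subtlety, which I would address up front, is that the statement is formulated for $X,Y,Z \in (\mathcal{S})_{-1}$ while the reference \cite{janson1997} handles the $L^p_\mathcal{H}(\Omega)$ case. This is resolved by observing that the chaos-expansion formula for $\diamond$ on $(\mathcal{S})_{-1}$ is purely formal (no convergence issue arises in manipulating $\sum_{\alpha,\beta} a_\alpha b_\beta H_{\alpha+\beta}$), and for the $S$-transform step one restricts test points to $\lambda h \in \mathcal{H}$ with $|\lambda|$ small enough so that $e^{\diamond \lambda h}$ lies in the predual of the relevant $(\mathcal{S})_{-1,-q}$, as discussed after Definition \ref{def:S-transform-Hida}. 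No identity is truly hard here; the care goes into ensuring the right topology for injectivity of $S$, after which every line is a one-step algebraic computation.
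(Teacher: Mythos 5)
Your proof is correct. Note, however, that the paper does not actually prove this lemma: immediately before the statement it says that the proofs of these identities (in the $L^p_\mathcal{H}(\Omega)$ case) can be found in the cited reference of Janson, so the ``paper's proof'' is a citation rather than an argument. What you supply is a self-contained derivation along the standard lines of that reference: the algebraic identities \eqref{eq:wick-property2} and \eqref{eq:wick-property3} follow from the term-by-term chaos-expansion formula $F \diamond G = \sum_{\alpha,\beta} a_\alpha b_\beta H_{\alpha+\beta}$ (where the coefficient of each $H_\gamma$ is a finite sum over $\alpha+\beta=\gamma$, so no convergence issue arises), identity \eqref{eq:wick-exponential-sum-identity} is a one-line computation from Definition \ref{def:wick-exponential}, and \eqref{eq:wick-exponential-sum-wick-product} follows from $(Se^{\diamond X})(h)=e^{\EX[Xh]}$ together with multiplicativity and injectivity of the $S$-transform. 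Your explicit attention to the $(\mathcal{S})_{-1}$ versus $L^p_\mathcal{H}(\Omega)$ issue --- restricting to test points $\lambda h$ with $|\lambda|$ small so that $e^{\diamond \lambda h}$ lies in the predual of the relevant $(\mathcal{S})_{-1,-q}$ --- is exactly the point the paper glosses over by citing only the $L^p$ case, and is the one place where care is genuinely needed; for the exponential identities themselves the restriction is not even necessary, since $e^{\diamond X}$ and $e^{\diamond Y}$ lie in every $L^p_\mathcal{H}(\Omega)$ for Gaussian $X,Y$. In short, your argument buys the reader a complete proof where the paper offers only a pointer.
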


\begin{remark} \label{rem:wick-exp}
    In particular, taking $Y=-X$ in~\cref{eq:wick-exponential-sum-identity} immediately results in
    \begin{align*}
        e^{\diamond (-X)} & =\frac{1}{e^{\EX [X^2]}e^{\diamond X}},
        %e^{\diamond (-X)}&=e^{-\EX [X^2]}e^{-\diamond X},
    \end{align*}
    upon recalling that by definition $e^{\diamond 0} = 1$. The above identity justifies the inclusion of the $e^{\diamond(-X)}$-term in problem~\cref{eq:wick-equation-general-problem} above, in addition to the fact that the log-correlated Gaussian field $X$ is symmetric: thus $e^{\diamond (-X)}$ and $e^{\diamond X}$ obey the same law.
\end{remark}
Random variables \textcolor{black}{$X = \sum_{\alpha}a_\alpha H_\alpha(\omega) \in (\mathcal{S})_{-1}$ such that $a_0 =: \EX[X] \neq 0$ (i.e. $X$ has non-zero (generalized) expectation)} also admit a multiplicative Wick inverse $X^{\diamond (-1)} \in (\mathcal{S})_{-1}$ defined through identity
\begin{align}
    X \diamond X^{\diamond (-1)} & = 1. \label{eq:wick-property4}
\end{align}
%\label{rem:wick-inverse-s-transform}
\begin{remark}\label{rem:wick-inverse-s-transform}
    In terms of $S$-transforms, we obtain
    \begin{align}
        \label{eq:inverse-S}
        (SX)(h) \cdot (SX^{\diamond(-1)})(h) = 1 \quad \textnormal{if and only if} \quad (SX^{\diamond(-1)})(h) = ((SX)(h))^{-1},
    \end{align}
    for the Wick inverse $X^{\diamond(-1)}$. In particular, we have
    \begin{align*}
        \frac{(SX)(h)}{(SY)(h)} & = (SX)(h) \cdot((SY)(h))^{-1} = (SX)(h) \cdot (SY^{\diamond(-1)})(h) \\
                                & =
        (S(X \diamond Y^{\diamond(-1)}))(h).
    \end{align*}
    We will return to this point in the proof of~\cref{thm:wick-eqn-is-solvable}.
\end{remark}

\begin{remark}\label{rem:wick-inverse}
    The Wick inverse in~\cref{eq:wick-property4} can be a very poorly behaving object in general. Indeed, even in the simple case $X(t)=1+\alpha Z$ with $Z\sim N(0,1)$ and $\alpha \in \mathbb{R}$, one can show by using the Wiener-It\^{o} chaos decomposition that, formally, we have
    \begin{equation*}
        X(t)^{\diamond (-1)} = 1+\sum_{k=1}^\infty (-\alpha)^k H_k(Z),
    \end{equation*}
    where $H_k$ denotes the normalized Hermite polynomial. Note that the above series converges in $L^2_{\mathcal{H}}(\Omega)$ only for $|\alpha|<1$. This example already illustrates that even for Gaussian random variables $X$ the inverse $X^{\diamond (-1)}$ in general is not a well-defined random variable (in some $L^p_\mathcal{H}(\Omega)$ space), but rather only a generalized function in the white noise space $(\mathcal{S})_{-1}$. For the computation of the Wick inverse in the white noise case, we refer to~\cite{hu1994}.
\end{remark}

\textcolor{black}{In general, due to the presence of this Wick inverse, our solutions of the Wick problem can be very rough or singular. Therefore, in contrast to  Definition \ref{defn:soln-of-non-wick-sde}, where we defined the solutions of the non-Wick problem essentially via the supremum norm, here we cannot expect that our solutions are always going to be continuous stochastic processes, or even belong to $L^2(\Omega)$. Therefore, we will introduce the following definition, which is a form of weak convergence, and the solution is a stochastic generalized function in general.}

\begin{definition}\label{def:wick-solution}
    Let $X_\eps$ be a convolution approximation of the log-correlated field $X$ on $[0,T]$ so that $d\mu_{\beta,\eps} \to d\mu_\beta$ in probability in the space of Radon measures (see~\cref{lem:mollification-of-log-correlated-field}) as $\eps \to 0$. Let $U_\eps$ be the corresponding solution of the problem
    \begin{align*}
        \begin{cases}
            -(e^{\diamond (-X_{\eps}(t))} \diamond U_\eps'(t))' & = f(t), \qquad t \in (0,T) \\
            \textnormal{boundary data for } U_\varepsilon.
        \end{cases}
    \end{align*}
    In the above, boundary data can be any of the cases~\cref{item:wick:initial,item:wick:dirichlet,item:non-wick:neumann,item:non-wick:periodic} in~\cref{thm:wick-eqn-is-solvable}.
    We say that $U$ with \textcolor{black}{$U(t) \in (\mathcal{S})_{-1,-q}$ for all $t\in(0,T)$ and for some $q$, is a solution of~\cref{eq:wick-equation-general-problem}, if $U_\eps(t) \in (\mathcal{S})_{-1,-q}$ for all $t\in(0,T)$ and (small enough) $\eps$ and 
    we have}
    \begin{equation*}
        \lim_{\eps \to 0}(SU_\eps(t))(h)=(SU(t))(h)
    \end{equation*} 
    for all $t \in (0,T)$ and for all Hida test points $h$ \textcolor{black}{as per Definition \ref{def:Hida-test}}.
\end{definition}
\textcolor{black}{Note that the definition essentially says that we seek solution from the space $(\mathcal{S})_{-1}$ (that is the union of $(\mathcal{S})_{-1,-q}$ for all $q$). The reason for specifying $q$ in the definition is that $S$-transform is defined via duality for test points $h \in (\mathcal{S})_{1,q}$ and hence we need to choose $q$ uniformly over $\epsilon$.}

We remark that although the convergence specified in~\cref{def:wick-solution} is even weaker than the weak $L^2(\Omega)$ convergence, with some choices of boundary data we can replace the convergence in the $S$-transform side with the usual weak $L^2$-convergence. In particular, this is the case with Neumann boundary data or in the initial value problem. 

\begin{thm} \label{thm:wick-eqn-is-solvable}
    Let $f \colon [0,T] \to \R$ be a deterministic, integrable function, and let $X_\beta=\beta X$ where $X$ is the log-correlated Gaussian field on $(0,T)$, with $0 < \beta < 1$. Consider the equation
    \begin{align*}
        -(e^{\diamond (-X_\beta(t))} \diamond U'(t))' = f(t), \qquad t \in (0,T).
    \end{align*}
    Denote $F(t) = -\int_0^t f(s) ds$, then \textcolor{black}{the above equation }with:
    \begin{enumerate}
        \item\label{item:wick:initial} \emph{Initial data}: $U(0)=U_1$ and $e^{\diamond (-X_\beta(0))}\diamond U'(0) = U_2$, where $U_1$ and $U_2$ are deterministic constants, the problem admits a unique solution $U$ on $(0,T)$ in the sense of~\cref{def:wick-solution}. The solution is given by
            \begin{align*}
                U(t) = U_1 + \int_{0}^t \left(U_2 + F(s)  \right) \diamond e^{\diamond X_\beta(s)} \, ds, \qquad t \in (0,T)
            \end{align*}

        \item\label{item:wick:dirichlet} \emph{Dirichlet data}: $U(0)=U_1$, and $U(T)=U_2$, where $U_1$ and $U_2$ are deterministic constants, the problem admits a unique solution $U$ on $(0,T)$ in the sense of~\cref{def:wick-solution}. The solution is given by
            \begin{align*} %\label{eq:wick-solution-dirichlet}
                U(t) = U_1 + \int_{0}^t (\kappa  + F(s)) \diamond e^{\diamond X_\beta(s)}\, ds, \qquad t \in (0,T)
            \end{align*}
            where the random variable $\kappa$ is 
            \begin{align*} %\label{eq:wick-solution-dirichlet-kappa}
                \kappa:=\left(U_2 - U_1 - \int_{0}^T F(s) e^{\diamond X_\beta(s)}\, ds \right) \diamond \left(\int_{0}^T e^{\diamond X_\beta(s)}\, ds  \right)^{\diamond(-1)}.
            \end{align*}

        \item\label{item:wick:neumann} \emph{Neumann boundary conditions}: $e^{\diamond (-X_\beta(0))} \diamond U'(0) =U_1$ and $e^{\diamond (-X_\beta(T))} \diamond U'(T) = U_2$, where $U_1$ and $U_2$ are deterministic constants, the problem admits a unique (up to an additive random variable) solution $U$ on $(0,T)$ in the sense of~\cref{def:wick-solution}. The solution is given by
            \begin{equation*}
                U(t) =  \int_{0}^t \left(U_1 + F(s) \right) \diamond e^{\diamond X_\beta(s)} \, ds, \qquad t \in (0,T).
            \end{equation*}
        \item\label{item:wick:periodic} \emph{Periodic boundary conditions}: $U(0)=U(T)$, and $e^{\diamond (-X_\beta(0))} \diamond U'(0) = e^{\diamond (-X_\beta(T))} \diamond U'(T)$, assuming that $\int_{0}^T f(s)\,ds = 0$, the problem admits a unique (up to an additive random variable) solution $U$ on $(0,T)$ in the sense of~\cref{def:wick-solution}. The solution is given by
            \begin{align} \label{eq:wick-solution-periodic}
                U(t) =   \int_{0}^t (\kappa + F(s))\diamond e^{\diamond X_\beta(s)} \,ds, \qquad t \in (0,T),
            \end{align}
            and the random variable $\kappa$ is 
            \begin{align} \label{eq:wick-solution-periodic-kappa}
                \kappa := -\left(\int_{0}^T F(s) e^{\diamond X_\beta(s) \, ds}\right)\diamond \left(\int_{0}^T e^{\diamond X_\beta(s)\,ds}\right)^{\diamond(-1)}.
            \end{align}
    \end{enumerate}
    Recall that $e^{\diamond X_\beta(s)}\, ds$ denotes the Gaussian multiplicative chaos measure.
\end{thm}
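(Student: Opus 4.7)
The plan is to mirror the structure of the proof of Theorem \ref{thm:non-wick-eqn-is-solvable}, but to replace convergence in the space of Radon measures by convergence of $S$-transforms at Hida test points, as required by Definition \ref{def:wick-solution}. As in the non-Wick case, I would treat the periodic case in detail; the other boundary conditions follow by identical algebra with different choices of the integration constants. First, regularize $X_\beta$ by $X_{\beta,\eps}=\beta X_\eps$ via the convolution approximation, so that $e^{\diamond(\pm X_{\beta,\eps}(t))}$ is a pointwise-defined smooth Gaussian process. Integrate the equation once in $t$, use Remark \ref{rem:wick-exp} (which gives $e^{\diamond(-X)}\diamond e^{\diamond X}=1$ pointwise for the regularized field) and Wick-multiply by $e^{\diamond X_{\beta,\eps}(t)}$ to isolate $U_\eps'(t)$. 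Integrating once more and enforcing $U_\eps(0)=U_\eps(T)$ forces the compatibility $\int_0^T f(s)\,ds=0$ and yields
\begin{equation*}
U_\eps(t) = U_\eps(0) + \int_0^t \bigl(\kappa_\eps + F(s)\bigr) \diamond e^{\diamond X_{\beta,\eps}(s)}\,ds,
\end{equation*}
with $\kappa_\eps$ of the same Wick-inverse form as in \eqref{eq:wick-solution-periodic-kappa} but with $X_{\beta,\eps}$ in place of $X_\beta$. The Wick inverse is unambiguous at the regularized level since $\EX\int_0^T e^{\diamond X_{\beta,\eps}(s)}\,ds = T > 0$.

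Next, apply the $S$-transform at a Hida test point $h$. By Lemma \ref{lem:S-transform-wick-product-scalar-product} the Wick product becomes an ordinary product, and using $(Se^{\diamond Y})(h) = e^{\EX[Yh]}$ for centered Gaussian $Y$, the formula reduces to the deterministic identity
\begin{equation*}
(SU_\eps(t))(h) = U_\eps(0) + \int_0^t \bigl((S\kappa_\eps)(h) + F(s)\bigr)\, e^{\EX[X_{\beta,\eps}(s)\,h]}\,ds,
\end{equation*}
with $(S\kappa_\eps)(h)$ given by the obvious quotient, whose denominator is nonzero by positivity of the exponential (cf.\ \eqref{eq:inverse-S}). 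By item (iii) of Lemma \ref{lem:mollification-of-log-correlated-field}, $\EX[X_{\beta,\eps}(s)\,h] \to \EX[X_\beta(s)\,h]$ pointwise in $s\in(0,T)$. Choosing $h=\lambda w(\varphi)$ with $\varphi$ smooth and compactly supported in a neighbourhood of $[0,T]$, one checks that $s\mapsto\EX[X_{\beta,\eps}(s)\,h]$ is bounded uniformly in $\eps$ and $s\in[0,T]$, since the convolution of the logarithmic kernel against $\varphi$ is a bounded function. Dominated convergence then passes the limit $\eps\to 0$ inside both integrals, and the limit is exactly $(SU(t))(h)$ for the $U$ defined by \eqref{eq:wick-solution-periodic}--\eqref{eq:wick-solution-periodic-kappa}, as one verifies by expanding that $S$-transform through Lemma \ref{lem:S-transform-wick-product-scalar-product} and Remark \ref{rem:wick-inverse-s-transform}.

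The main obstacle will be the treatment of the Wick inverse in the \emph{limit}: I must verify that the formal object $\bigl(\int_0^T e^{\diamond X_\beta(s)}\,ds\bigr)^{\diamond(-1)}$ is well-defined in some $(\mathcal{S})_{-1,-q}$ and that the $U_\eps$ lie in the same space for small $\eps$, so that Definition \ref{def:wick-solution} applies and the injectivity of the $S$-transform on $(\mathcal{S})_{-1,-q}$ delivers the uniqueness claim. This is plausible because, by Theorem \ref{thm:GMC-moments}, the mass $M:=\int_0^T e^{\diamond X_\beta(s)}\,ds$ has strictly positive generalized expectation $T$ and all negative moments, so $M^{\diamond(-1)}$ can be constructed via the Neumann-type series $\frac{1}{T}\sum_{k\ge 0}(-1)^k(T^{-1}(M-T))^{\diamond k}$, controlled in a suitable $(\mathcal{S})_{-1,-q}$-norm (cf.\ Remark \ref{rem:wick-inverse}). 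The Dirichlet case presents the same obstacle; the IVP and Neumann cases avoid it entirely, because there $\kappa$ is deterministic or absent.
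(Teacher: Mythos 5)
Your overall route is the same as the paper's: regularize, integrate twice, Wick--multiply by $e^{\diamond X_{\beta,\eps}}$ to isolate $U_\eps$, take $S$-transforms to turn Wick products into ordinary products, and pass to the limit. The algebraic steps and your dominated-convergence argument for the pointwise convergence of $(SU_\eps(t))(h)$ are fine (the paper instead deduces this from $L^2$-convergence of $\int_0^t e^{\diamond X_{\beta,\eps}(s)}\,ds$ and $\int_0^t F(s)e^{\diamond X_{\beta,\eps}(s)}\,ds$, but your version works). The problem is the step you yourself flag as ``the main obstacle'': it is left as a plausibility sketch, and the sketch as given does not close the argument. Definition \ref{def:wick-solution} requires that $U_\eps(t)$ and the limit $U(t)$ all belong to a \emph{single} space $(\mathcal{S})_{-1,-q}$ with $q$ independent of $\eps$; otherwise the class of admissible Hida test points (those with $e^{\diamond h}\in(\mathcal{S})_{1,q}$) degenerates as $\eps\to 0$ and pointwise convergence of $S$-transforms does not identify a limit in the distribution space. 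Your Neumann-series construction $\frac 1T\sum_{k\ge 0}(-1)^k\bigl(T^{-1}(M-T)\bigr)^{\diamond k}$ only addresses existence of the \emph{limiting} inverse $M^{\diamond(-1)}$ (and even there it would need V\aa ge-type Wick-algebra norm estimates to control the powers in some $(\mathcal{S})_{-1,-q}$); it says nothing about the uniformity in $\eps$ of the spaces containing $\kappa_\eps$, and it does not actually use the negative moments you invoke.

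The paper closes exactly this gap with \cref{lemma:S_q-convergence}, and the mechanism is worth spelling out because it is the genuinely nontrivial analytic content of the proof. First, Kahane's convexity inequality compares the regularized field with the limit field and yields the \emph{uniform} bound $\sup_{\eps>0}\EX\bigl[\bigl(\int_0^T e^{\diamond X_{\beta,\eps}(s)}\,ds\bigr)^{-2}\bigr]<\infty$ (uniformity is the point; \cref{thm:GMC-moments} alone only controls the limit measure). Second, a Cauchy--Schwarz/Sedrakyan argument converts this into the uniform growth estimate $(SM_\eps^{\diamond(-1)})(h)\le Ce^{\EX[h^2]}$ with constants independent of $\eps$. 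Third, the characterization theorem of \cite[Proposition 3.1]{hu1994} then upgrades pointwise convergence of $S$-transforms plus this uniform exponential bound into convergence $M_\eps^{\diamond(-1)}\to M^{\diamond(-1)}$ in a fixed $(\mathcal{S})_{-1,-q}$, which is what licenses the application of \cref{def:wick-solution} and the injectivity-of-$S$-transform uniqueness argument in the Dirichlet and periodic cases. You need to supply some version of these three steps (or an equivalent uniform bound on $\kappa_\eps$ in a fixed Kondratiev norm); as written, your proof establishes the formula for the candidate solution but not that it is a solution in the sense of \cref{def:wick-solution}.
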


\begin{remark}
    We remark that cases~\cref{item:wick:initial,item:wick:neumann} above the solutions actually belong to $L^2(\Omega;C(0,T))$ and coincides with the solution to the renormalized non-Wick equation, see~\cref{thm:non-wick-eqn-is-solvable}. This follows from~\cref{thm:non-wick-eqn-is-solvable}, since in these cases
    \begin{equation*}
        \int_{0}^t \left(U_1 + F(s) \right) \diamond e^{\diamond X_\beta(s)} \, ds = \int_{0}^t \left(U_1 + F(s) \right) e^{\diamond X_\beta(s)} \, ds
    \end{equation*}
    by the fact that $U_1 + F(s)$ is deterministic.
\end{remark}

\begin{remark}\label{rem:wick-soln-cont}
    While the solution to the ``non-Wick'' problem
    \begin{equation*}
        e^{\EX [ X_\beta(t)^2]}(e^{\diamond (-X_\beta(t))}U'(t))' = f(t),
    \end{equation*}
    with given boundary data is a well-defined H\"older continuous process, see~\cref{rem:non-wick-continuity}, this is not the case for the Wick equation in general. Indeed, the reason being the (Wick) multiplication with a Wick inverse in the constant $\kappa$, cf.~\cref{rem:wick-inverse-s-transform}. The exceptions are the cases~\cref{item:wick:initial,item:wick:neumann}, where the Wick product reduces to the ordinary scalar product.
\end{remark}

\begin{remark}
    We remark that for $D \subset \R^n$ the pressure equation
    \begin{align*}
        \begin{cases}
            -\nabla (K(x) \diamond \nabla U(x)) = f(x), & \qquad x \in D    \\
            U(x) = 0,                                   & x \in \partial D,
        \end{cases}
    \end{align*}
    where $K$ is a re-normalized exponential of Gaussian white noise, has been studied in the literature. Common to some approaches in the literature and ours is the idea to transform the problem into a deterministic problem for which PDE techniques can be applied. For example, in~\cite{holden1995} the authors obtain a representation formula for the solution as a $(\mathcal{S})_{-1}$-valued random variable by using the Hermite transform, which is closely related to $S$-transform employed in the present paper. The key difference here is that for the white noise, upon Hermite (or S-) transformation the left-hand-side of the equation defines a uniformly elliptic operator and, thus is amenable to standard analytic methods. For a related result, see also~\cite{potthoff1992}. Finally, we note that in~\cite{lindstrom1991} (see also~\cite{holden2009}), the authors solve a one-dimensional variant of the above problem,
    \begin{align*}
        -(e^{\diamond X(t)}\diamond U'(t))'=f(t), \qquad t\in (0,T),
    \end{align*}
    directly by Wick calculus identities in~\cref{lem:wick-identities}.
    Indeed, integrating the equation, applying~\cref{eq:wick-property2},~\cref{eq:wick-property4} (since $\EX [e^{\diamond X}] =1$),~\cref{eq:wick-exponential-sum-identity} and~\cref{eq:wick-property3} we have
    \begin{align*}
        U'(t) = -\left(e^{\diamond X(t)}\right)^{\diamond(-1)} \int_{0}^t f(s) \, ds + C,
    \end{align*}
    where $C$ is an arbitrary random constant. Upon integrating once more, the above yields a formula for the solution in the distribution space $(\mathcal{S})_{-1}$.
\end{remark}

Before proceeding with the proof of~\cref{thm:wick-eqn-is-solvable} we need the following technical lemma to handle the random variable $\kappa$.

\begin{lemma}\label{lemma:S_q-convergence}
    Let $X_\eps \to X$ in $L^2_\mathcal{H}(\Omega)$, where $\EX [X] \neq 0$, be such that
    \begin{equation}\label{eq:S_q-ass}
        \sup_{\eps>0}\EX [X_\eps^{-2}] < \infty.
    \end{equation}
    Then there exists $q$ that is independent of $\eps$ such that $X_{\eps}^{\diamond (-1)} \to X^{\diamond (-1)}$ in $(\mathcal{S})_{-1,-q}$.
\end{lemma}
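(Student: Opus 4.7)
The plan is to pass through the $S$-transform, where Wick inverses become ordinary reciprocals, and then apply the Kondratiev--Streit characterization of $(\mathcal{S})_{-1,-q}$. By~\cref{rem:wick-inverse-s-transform}, $(SX_\eps^{\diamond(-1)})(h) = (SX_\eps(h))^{-1}$ and $(SX^{\diamond(-1)})(h) = (SX(h))^{-1}$, so the elementary identity $a^{-1} - b^{-1} = (b-a)/(ab)$ yields
\[
S\bigl(X_\eps^{\diamond(-1)} - X^{\diamond(-1)}\bigr)(h) \;=\; \frac{SX(h) - SX_\eps(h)}{SX(h)\, SX_\eps(h)}.
\]
The task reduces to controlling the numerator from above and the denominator from below, uniformly in $\eps$ and with at most Gaussian-exponential growth in $h$.

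For the numerator, Cauchy--Schwarz together with the standard identity $\|e^{\diamond h}\|_{L^2} = e^{\E[h^2]/2}$ gives
\[
|SX(h) - SX_\eps(h)| \;=\; \bigl|\E[(X-X_\eps)\, e^{\diamond h}]\bigr| \;\leq\; \|X - X_\eps\|_{L^2}\, e^{\E[h^2]/2},
\]
which tends to zero by the $L^2$-convergence hypothesis. For the denominator, observe that in the setting of~\cref{thm:wick-eqn-is-solvable} the approximands $X_\eps$ are integrals of strictly positive GMC approximations, so $X_\eps > 0$ almost surely. Since $e^{\diamond h}$ is strictly positive with unit mean, the tilt $d\PR_h := e^{\diamond h}\, d\PR$ is a probability measure, and Jensen's inequality applied to the convex function $x \mapsto 1/x$ on $(0,\infty)$ yields
\[
|SX_\eps(h)|^{-1} \;=\; \bigl(\E_h X_\eps\bigr)^{-1} \;\leq\; \E_h X_\eps^{-1} \;=\; \E\!\left[X_\eps^{-1}\, e^{\diamond h}\right] \;\leq\; \|X_\eps^{-1}\|_{L^2}\, e^{\E[h^2]/2},
\]
and analogously for $X$. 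The hypothesis $\sup_\eps \E[X_\eps^{-2}] < \infty$, combined with Fatou applied along an almost surely convergent subsequence, simultaneously bounds $\|X_\eps^{-1}\|_{L^2}$ and $\|X^{-1}\|_{L^2}$ by a common constant.

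Multiplying the three estimates together gives the pointwise $S$-transform bound
\[
\bigl|S(X_\eps^{\diamond(-1)} - X^{\diamond(-1)})(h)\bigr| \;\leq\; C\, \|X - X_\eps\|_{L^2}\, e^{\tfrac{3}{2}\E[h^2]},
\]
which is precisely of the type required for the Kondratiev--Streit characterization, see e.g.~\cite{kondratiev1996,holden2009}: an analytic functional on Hida test points satisfying $|F(\lambda w(\varphi))| \leq K \exp\!\bigl(a \lambda^2 \|\varphi\|_p^2\bigr)$ is the $S$-transform of an element of $(\mathcal{S})_{-1,-q}$ for some $q = q(a,p)$ independent of the functional, whose $(\mathcal{S})_{-1,-q}$-norm is linearly controlled by $K$. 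Applied here with $a = 3/2$ and the relevant $p$, this fixes a single $q$ independent of $\eps$, while the prefactor $C\|X-X_\eps\|_{L^2}$ delivers the actual convergence $\|X_\eps^{\diamond(-1)} - X^{\diamond(-1)}\|_{(\mathcal{S})_{-1,-q}} \to 0$.

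The main technical step is the denominator estimate: Jensen's inequality exploits in an essential way the positivity of $X_\eps$, which fortunately is inherent to the applications of the lemma in~\cref{thm:wick-eqn-is-solvable}. Without it, one would need to replace Jensen by a complex-analytic lower bound for $|SX_\eps(h)|$ on a suitable subdomain of the test points, which appears substantially more delicate; here, however, positivity reduces the proof to the clean $S$-transform computation above.
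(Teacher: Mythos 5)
Your proposal is correct and follows essentially the same route as the paper: the reduction $(SX_\eps^{\diamond(-1)})(h)=(SX_\eps(h))^{-1}$, the numerator control by Cauchy--Schwarz, and the crucial lower bound on $SX_\eps(h)$ via positivity of $X_\eps$ together with Jensen under the tilted measure $e^{\diamond h}\,d\PR$ (the paper's Sedrakyan/Cauchy--Schwarz step is the identical inequality, and it likewise silently uses $X_\eps>0$, which you are right to flag). The only difference is the packaging of the last step -- the paper invokes the convergence criterion of \cite[Proposition 3.1]{hu1994} (pointwise convergence of $S$-transforms plus a uniform bound $Ce^{K|h|_{2,p}^2}$) rather than a norm estimate on the difference via the characterization theorem -- which is a cosmetic rather than substantive deviation.
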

\begin{proof}
    By~\cite[Proposition 3.1]{hu1994} we have $X_{\eps}^{\diamond (-1)} \to X^{\diamond (-1)}$ in $(\mathcal{S})_{-1,-q}$ for some $q$ if $(SX_\eps^{\diamond (-1)})(h) \allowbreak \to (SX^{\diamond (-1)})(h)$ and
    \begin{equation*}
        (SX_\eps^{\diamond (-1)})(h) \leq Ce^{K|h|_{2,p}^2}
    \end{equation*}
    for constants $C,K$ and $p>0$ that does not depend on $\eps$\footnote{For the precise definition and properties of the norm $|h|_{2,p}$, see e.g.~\cite{potthoff-streit1991}.}. The first assertion is clear from~\cref{eq:inverse-S} together with the fact $X_\eps \to X$ in $L^2_\mathcal{H}(\Omega)$. For the second assertion, note that by \textcolor{black}{Sedrakyan's inequality / Titu's lemma alongside the} Cauchy-Schwarz inequality we obtain\textcolor{black}{, as $h$ is a standard Gaussian and thus} $\EX [e^{\diamond h}] \textcolor{black}{= \EX[e^{h-\EX[h^2/2]}] = e^{-\frac{1}{2}} \EX [e^{1 \cdot h}] }  =1$, \textcolor{black}{that}
    \begin{equation*}
        \EX [X_\eps e^{\diamond h}] \geq \frac{\EX [e^{\diamond h}]}{\EX \left[\frac{e^{\diamond h}}{X_\eps}\right]} \geq \frac{\EX [e^{\diamond h}]}{\textcolor{black}{\sqrt{\EX [(e^{\diamond h})^2]}\sqrt{\EX [X_\eps^{-2}]}}} \geq Ce^{-\EX [h^2]},
    \end{equation*}
    where the last inequality follows from the assumption \textcolor{black}{in \eqref{eq:S_q-ass}} and the fact that $\EX[(e^{\diamond h})^2] = e^{\EX[h^2]}$, see~\cite[Corollary 3.37]{janson1997}.
    Hence, again by~\cref{eq:inverse-S}, we obtain
    \begin{equation*}
        (SX_\eps^{\diamond (-1)})(h) \leq Ce^{\EX [h^2]}
    \end{equation*}
    which completes the proof since $\EX [h^2] \leq |h|_{2,p}^2$ for any $p>0$, see \cite[pp. 214]{potthoff-streit1991}.
\end{proof}
\begin{proof}[Proof of~\cref{thm:wick-eqn-is-solvable}]
    We \textcolor{black}{prove this only in the case} of periodic boundary data, as again the other cases can be proved with similar arguments. Moreover, we set $U(T) = U(0)=0$ (recall that the solution is unique up to additive constant). In this case the approximated equation is given by
    \begin{align} \label{eq:wick-eqn-mollified}
        -\left(e^{\diamond (-X_{\beta,\eps}(t))}\diamond  U_\eps'(t)   \right)' = f(t), \qquad t \in (0,T)
    \end{align}
    with boundary conditions
    \begin{align*}
        U_\eps(0)                                             & =U_\eps(T) \nonumber \\
        e^{\diamond (-X_{\beta,\eps}(0))} \diamond U_\eps'(0) & =
        e^{\diamond (-X_{\beta,\eps}(T))} \diamond U_\eps'(T).
    \end{align*}
    Integrating once and Wick multiplying with the inverse $\left(e^{\diamond (-X_{\beta,\eps}(t))}\right)^{\diamond(-1)} = e^{\diamond X_{\beta,\eps}(t)}$ yields
    \begin{equation*}
        U_\eps'(t) =  \left(-\int_{0}^t f(s) \, ds + \kappa_\eps\right) \diamond e^{\diamond X_{\beta,\eps}(t)}
    \end{equation*}
    for some random variable $\kappa_\eps$.
    Integrating once more yields
    \begin{align}\label{eq:wick-periodic-soln}
        U_\eps (t) = \int_{0}^t (\kappa_\eps + F(s))\diamond e^{\diamond X_{\beta,\eps}(s)} \, ds,
    \end{align}
    where the \textcolor{black}{first} boundary condition implies that $\kappa_\eps$ is given by
    \begin{align*}
        \kappa_\eps  = -\left(\int_{0}^T F(s)e^{\diamond X_{\beta,\eps}(s)} \,ds \right)\diamond \left(\int_{0}^T e^{\diamond X_{\beta,\eps}(s)}\,ds\right)^{\diamond(-1)}.
    \end{align*}
    Taking the $S$-transform now gives
    \begin{equation*}
        (SU_{\eps}(t))(h) = (S\kappa_\eps)(h)\left(S \left [\int_0^t e^{\diamond X_{\beta,\eps}(s)}ds \right ]\right)(h) + \left(S \left [\int_0^t F(s)e^{\diamond X_{\beta,\eps}(s)}ds\right ] \right)(h).
    \end{equation*}
    Note that here we have, for all $t\in(0,T)$,
    \begin{equation*}
        \int_0^t e^{\diamond X_{\beta,\eps}(s)}ds \to \int_0^t e^{\diamond X_\beta(s)}ds
    \end{equation*}
    and
    \begin{equation*}
        \int_0^t F(s)e^{\diamond X_{\beta,\eps}(s)}ds \to \int_0^t F(s)e^{\diamond X_\beta(s)}ds
    \end{equation*}
    in $L^2_\mathcal{H}(\Omega)$. \textcolor{black}{We next recall the Kahane convexity / concavity inequality (see, for instance, ~\cite[Theorem 2.1]{rhodes2014}), which states that if $(A_i)_{1 \leq i \leq n}$ and $(B_i)_{1 \leq i \leq n}$ are two centered Gaussian vectors such that 
    \begin{align*}
        \forall i,j, \quad \EX [A_i A_j] \leq \EX [B_i B_j], 
    \end{align*}
    then for all combinations of nonnegative weights $(p_i)_{1 \leq i \leq n}$ and all convex (resp. concave) functions $F \colon \R_+ \to \R$ with at most polynomial growth at infinity, it holds that
    \begin{align}\label{eq:kahane-convex}
        \EX \left[ F \left(\sum_{i=1}^n p_i e^{A_i - \frac{1}{2} \EX [A_i^2]}    \right)   \right] \leq (\textnormal{resp.} \, \leq\, ) \EX \left[ F \left(\sum_{i=1}^n p_i e^{B_i - \frac{1}{2} \EX [B_i^2]}    \right)   \right].
    \end{align}
    Now, to apply the continuous version of \eqref{eq:kahane-convex}, we note that since $X_{\beta, \eps}$ is a regularization of $X_\beta$, we have $\EX[X_{\beta,\eps}(t)X_{\beta,\eps}(s)] \leq \EX[X_{\beta}(t)X_{\beta}(s)]$, and by choosing $F(x)=x^{-p}, p < 0$, we obtain for all $\eps > 0:$
    \begin{align*}
      \EX \left[ \left(\int_{0}^T e^{X_{\beta, \eps}(t) - \frac{1}{2} \EX [X_{\beta, \eps}^2]} \, dt   \right)^{-p}   \right]  \leq \EX \left[ \left(\int_{0}^T e^{X_{\beta}(t) - \frac{1}{2} \EX [X_{\beta}(t)^2]}   \, dt  \right)^{-p}    \right].
    \end{align*}
 We know that above the quantity on the right-hand side is finite by Theorem \ref{thm:GMC-moments}. Thus t}he random variable $\int_0^T e^{\diamond X_{\beta,\eps}(s)}ds$ has all negative moments finite, uniformly in $\eps$ (\textcolor{black}{one also obtains this} by adding smoothing to the proof in~\cite[Appendix B]{astala-et-al}). By~\cref{lemma:S_q-convergence}, it follows that
    there exists $q$ large enough, independent of $\eps$, such that
    \begin{equation*}
        \kappa_\eps  = -\left(\int_{0}^T F(s)e^{\diamond X_{\beta,\eps}(s) \, ds}\right)\diamond \left(\int_{0}^T e^{\diamond X_{\beta,\eps}(s)\,ds}\right)^{\diamond(-1)} \in (\mathcal{S})_{-1,-q}.
    \end{equation*}
    This further implies that the solution $U_\eps(t) \in (\mathcal{S})_{-1,-q}$ for large enough $q$ which does not depend on $\eps$, and in particular this determines the class of test points $h$ given by the condition $e^{\diamond h} \in (\mathcal{S})_{1,q}$. Now passing to the limit and using the above convergences in $L^2_\mathcal{H}(\Omega)$ (that implies convergence of the $S$-transforms on any test point $h$) yields, for any $h$ such that $e^{\diamond h} \in (\mathcal{S})_{1,q}$ where now $q$ is independent of $\eps$,
    \begin{equation*}
        \lim_{\eps\to 0} (SU_{\eps}(t))(h) = (S\kappa)(h)\left(S\int_0^t e^{\diamond X_{\beta}(s)}ds\right)(h) + \left(S\int_0^t F(s)e^{\diamond X_{\beta}(s)}ds\right)(h),
    \end{equation*}
    where
    \begin{equation*}
        \kappa =  -\left(\int_{0}^T F(s)e^{\diamond X_{\beta}(s) \, ds}\right)\diamond \left(\int_{0}^T e^{\diamond X_{\beta}(s)\,ds}\right)^{\diamond(-1)} \in (\mathcal{S})_{-1,-q}.
    \end{equation*}
    This provides us the claimed $U(t)$ as the solution. Finally, the uniqueness follows from the invertibility of the $S$-transform. This completes the proof.
\end{proof}

\begin{supplementary}
    Computing the approximate covariance on the limit:

    \begin{align*}
        \EX [X_\beta^\eps(t) X_\beta(e)] & =\EX \left[ X_\beta^\eps (t) \int_{0}^T e(a) \, dX_\beta(a)\right]   \\
                                        & =
        \EX \left[ \int_{0}^T \varphi_\eps(t-y) X_\beta(y) \, dy   \int_{0}^T e(a) \, X_\beta(a) \,da  \right] \\
                                        & =
        \int_{0}^T \int_{0}^T \varphi_\eps(t-y) e(a) \EX [X_\beta(y) X_\beta(a)] \, dy \, da                   \\
                                        & =
        -\beta^2 \int_{0}^T \int_{0}^T \varphi_\eps(t-y) e(a) \log\abs{y-a} \, dy \, da =: R_\eps(t)
    \end{align*}
    and we have
    \begin{align*}
        \lim_{\eps \to 0} R_\eps(t) =-\beta^2 \int_{0}^T e(a) \log\abs{t-a} \, da.
    \end{align*}

\end{supplementary}

\section{\texorpdfstring{$S$}{S}-transform at \texorpdfstring{$X_{\beta}$}{X} and projections into a subspace}\label{sec:projection}
By~\cref{thm:non-wick-eqn-is-solvable} and~\cref{thm:wick-eqn-is-solvable}, the solutions to problems~\cref{eq:non-wick-equation-general-problem} and~\cref{eq:wick-equation-general-problem} respectively are very similar, and essentially generated by random variables of the form
\begin{equation*}
    \int_0^T \varphi(y,s)e^{\diamond X_{\beta}(s)}ds
\end{equation*}
for some suitable $\varphi(y,s)$. Motivated by this, we consider projections into a subspace $L_{\beta'}^2(\Omega) \subset L^2(\Omega)$ generated by random variables of form
\begin{equation*}
    \int_0^T \varphi(y,s)e^{\diamond X_{\beta'}(s)}ds
\end{equation*}
\textcolor{black}{for fixed $y$}, with appropriately chosen $\beta' < 1$. Choosing $\beta'\neq \beta$, corresponds to projecting the solution into a subspace with less / higher order moments, cf.~\cref{thm:GMC-moments}. To this end, we introduce the $S$-transform at ``a point'' $X_{\beta'}(s)$ directly without mollification, and show that these $S$-transforms characterize our subspace $L^2_{\beta'}(\Omega)$, defined as the subspace generated by the GMC measure $d\mu_{\beta'}$.

For the sake of generality and the purposes of \textcolor{black}{comparison with an earlier paper (see \cite{renormalized})} studying the $d$-dimensional case, we introduce our definitions concerning log-correlated fields and GMC measures on a smooth bounded domain $D \subset \mathbb{R}^d$ from which we naturally recover our one-dimensional case by choosing $D = (0,T)$. In the general $d$-dimensional case, we consider values $\beta < \sqrt{d}$ corresponding to the $L^2$-range of the GMC measure. In the sequel, we always assume $D$ to be a \textcolor{black}{connected domain with a smooth boundary}. We also need to define spaces $W^{s,2}(D)$ and its topological dual $W_0^{-s,2}(D)$:
\begin{definition}
\label{def:frac-sobolev-general}
    % Let $D \subset \R^d$ be smooth and bounded and let $s\in \R$. 
    Let $s \in \R$, then $W^{s,2}(\R^d)$ is defined as
    \begin{equation*}
        W^{s,2}(\R^d) = \{f \in S'(\R^d) : \mathcal{F}{f}\in L^1_{loc}(\R^d) \textnormal{ s.t. } \Vert f\Vert_{W^{s,2}(\R^d)}^2 =  \int_{\R^d} (1+|\xi|^2)^s |\mathcal{F}{f}(\xi)|^2d\xi < \infty\},
    \end{equation*}
    where $\mathcal{F}{f}$ denotes the Fourier transform and $S'$ denotes the space of Schwartz distributions. 
    
\textcolor{black}{For $s\geq 0$, the } space $W^{s,2}(D)$ is defined as restrictions $f_{|D}$ of functions $f\in W^{s,2}(\R^d)$ into $D$, equipped with the norm
    \begin{equation}
    \label{eq:frac-sobolev-norm-restricted}
        \Vert f\Vert_{W^{s,2}(D)} = \inf\{\Vert g\Vert_{W^{s,2}(\R^d)}, g_{|D} = f\}\textcolor{black}{.}
    \end{equation}
\end{definition}
\textcolor{black}{We denote by $W_0^{-s,2}(D)$ the} topological dual of $W^{s,2}(D)$ (in the standard distributional duality sense). 
%Note also that in our one-dimensional case $D=(0,T)$ and for $s\in(0,1)$, $W^{s,2}(D)$ corresponds to the space introduced in~\cref{sec:preliminaries}. 
We \textcolor{black}{will} consider the following subspace of $L^2(\Omega)$.
\begin{definition}[$L_\beta^2(\Omega)$ space generated by GMC $d\mu_\beta$]\label{def:L2-GMC}
    Let $\beta\in(0,\sqrt{d})$. The space $L_{\beta}^2(\Omega) := L^2(\Omega, \mathbb{P},\mathcal{F}_{d\mu_{\beta}})$ generated by the GMC measure $d\mu_\beta$ is the $L^2(\Omega)$-closure of the linear space spanned by random variables of the form
    \begin{equation}
        \label{eq:basic-RV}
        \int_{D} \varphi(z) d\mu_\beta(z), \quad \varphi \in C_{\textcolor{black}{c}}^\infty(D).
    \end{equation}
\end{definition}
In order to characterize the random variables \textcolor{black}{belonging to} $L_\beta^2(\Omega)$ and define the $S$-transform at $X_\beta(s)$, we introduce the operator acting on $\phi:D \to \R$
\begin{align}\label{eq:operator-G}
    G\phi(z) = G_{\alpha}\phi(z): & =\int_D\exp (\alpha\EX [X(z)X(y)])\phi(y)\, dy \; \nonumber \\
                                  & =
    \int_D|z-y|^{-\alpha}e^{\alpha g(z,y)}\phi(y) dy,
\end{align}
where $\alpha\in (0,d)$ and $X$ is a log-correlated Gaussian field as in~\cref{def:log-correlated-field}. Note that if $g\equiv 0$ in~\cref{eq:general-cov}, then the operator $G$ reduces to the Riesz potential on $D$. We will later on apply the operator $G_\alpha$ with the choices $\alpha \in \{\beta^2,\beta'^2,\beta\beta'\}$.

For the invertibility of the $S$-transform, we need to assume that the log-correlated field has the following additional property.
\begin{definition}\label{def:X} We say that our log-correlated field $X_\beta := \beta X$  is \emph{$\beta$-non-degenerate} on a bounded domain $D \subset \mathbb{R}^d$ if it satisfies the following property: there is a constant $c_0>0$ such that for each   $\psi\in C^\infty_{\textcolor{black}{c}}(D)$ it holds that
    \begin{equation}\label{eq:full}
        \int_{D} \psi(z)G_{\beta^2}\psi(z)\,dz\geq c_0\|\psi\|^2_{W_0^{-{s_\beta},2}(D)},
    \end{equation}
    where $s_\beta = \frac{d-\beta^2}{2}$. % and $W_0^{-s_\beta,2}(D)$ is the dual of the fractional Sobolev Hilbert space $W^{s_\beta,2}(D)$.
\end{definition}
Our next result verifies that the class of such fields is not empty.
\begin{lemma}\label{le:non-degenerate}
    There are log-correlated fields $X$ on $D$  such that the fields $X_\beta:=\beta X$  are $\beta$-non-degenerate on $D$ for all $\beta\in (0,\sqrt{d})$.
\end{lemma}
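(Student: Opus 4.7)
The plan is to prove existence by an explicit construction of a translation-invariant log-correlated field and to verify~\cref{eq:full} through Fourier analysis. Choose an even function $h \in C^\infty_c(\R^d)$ (to be specified shortly) such that $R(u) := \log(1/|u|) + h(u)$ is a positive semi-definite kernel on $D$; then $R$ defines a log-correlated field $X$ on $D$ via the standard spectral/convolution constructions (see e.g.~\cite{lodhia2016} or~\cref{lem:mollification-of-log-correlated-field}). The translation invariance of $R$ turns the operator in~\cref{eq:operator-G} into the convolution $G_{\beta^2}\psi = K_\beta \ast \psi$ with kernel
\begin{equation*}
K_\beta(u) = |u|^{-\beta^2} e^{\beta^2 h(u)}.
\end{equation*}

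For $\psi \in C^\infty_c(D)$, let $\tilde\psi$ denote its zero extension to $\R^d$. A routine Hahn--Banach duality argument using~\cref{eq:frac-sobolev-norm-restricted} shows that $\|\psi\|_{W^{-s_\beta,2}_0(D)} = \|\tilde\psi\|_{W^{-s_\beta,2}(\R^d)}$, and Plancherel's theorem rewrites both sides of~\cref{eq:full} as the spectral integrals
\begin{equation*}
\int_D \psi\, G_{\beta^2}\psi\, dz = \int_{\R^d} \widehat{K_\beta}(\xi)\,|\widehat{\tilde\psi}(\xi)|^2\, d\xi, \qquad \|\psi\|_{W^{-s_\beta,2}_0(D)}^2 = \int_{\R^d}(1+|\xi|^2)^{-s_\beta}|\widehat{\tilde\psi}(\xi)|^2\, d\xi.
\end{equation*}
Thus~\cref{eq:full} reduces to the pointwise Fourier lower bound $\widehat{K_\beta}(\xi) \geq c_0(1+|\xi|^2)^{-s_\beta}$ for some $c_0 > 0$. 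Writing $K_\beta = |\cdot|^{-\beta^2} + \tilde K_\beta$ with $\tilde K_\beta(u) := |u|^{-\beta^2}(e^{\beta^2 h(u)} - 1)$ compactly supported, and using the classical Riesz-potential identity $\widehat{|\cdot|^{-\beta^2}}(\xi) = c_{d,\beta}|\xi|^{-2s_\beta}$ with $c_{d,\beta} > 0$, the required bound is immediate near $\xi = 0$ (where the Riesz term blows up) and at large $|\xi|$ (where $|\xi|^{-2s_\beta} \asymp (1+|\xi|^2)^{-s_\beta}$).

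The main obstacle is establishing a quantitative lower bound for $\widehat{K_\beta}$ on an intermediate annulus $r_1 \leq |\xi| \leq r_2$. Bochner's theorem applied to the positive-definite identity $e^{\beta^2 R_\eps(u)} = \EX[e^{\diamond\beta X_\eps(\cdot)}\, e^{\diamond\beta X_\eps(\cdot+u)}]$ for the mollified field, combined with the limit $\eps \downarrow 0$, only yields $\widehat{K_\beta} \geq 0$, which is insufficient. To overcome this, the plan is to take $h = c\,\eta \ast \check\eta$ for some $\eta \in C^\infty_c(\R^d)$ and small $c > 0$. Then $\widehat h = c|\widehat\eta|^2 \geq 0$, so $h$ is itself positive definite, and the series expansion $e^{\beta^2 h} - 1 = \sum_{k\geq 1}(\beta^2 h)^k/k!$ exhibits $e^{\beta^2 h} - 1$ as a positive-definite function (pointwise products and nonnegatively-weighted sums preserve positive-definiteness). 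The product $\tilde K_\beta = |\cdot|^{-\beta^2}(e^{\beta^2 h} - 1)$ of two positive-definite functions is then positive definite, so $\widehat{\tilde K_\beta} \geq 0$ and therefore
\begin{equation*}
\widehat{K_\beta}(\xi) \geq c_{d,\beta}|\xi|^{-2s_\beta} \geq c_0(1+|\xi|^2)^{-s_\beta}, \qquad \xi \in \R^d \setminus \{0\},
\end{equation*}
provided $c$ is chosen small enough that $R = \log(1/|u|) + c\,\eta\ast\check\eta$ remains a valid log-correlated covariance on $D$. This yields a constant $c_0 = c_0(\beta,h,d) > 0$ as required.
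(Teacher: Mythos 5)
Your overall strategy is the same as the paper's: write the exponentiated covariance kernel as $K_\beta=|\cdot|^{-\beta^2}e^{\beta^2 h}=|\cdot|^{-\beta^2}+|\cdot|^{-\beta^2}\bigl(e^{\beta^2 h}-1\bigr)$, show the remainder has nonnegative Fourier transform, and conclude $\widehat{K_\beta}(\xi)\geq c(1+|\xi|^2)^{-s_\beta}$ so that Plancherel gives \cref{eq:full}. That analytic core is sound and matches the paper, which records the remainder's Fourier transform as $a\ast|\cdot|^{-2s_\beta}$ with $a=\sum_{n\geq1}\beta^{2n}r^{\ast n}/n!\geq 0$ --- the same positive-definiteness bookkeeping as your power-series argument for $e^{\beta^2 h}-1$.

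The genuine gap is in the existence part, i.e.\ in verifying that $R(u)=\log(1/|u|)+c\,\eta\ast\check\eta(u)$ actually is a covariance on $D$. You defer this to the clause ``provided $c$ is chosen small enough,'' but that reasoning is backwards: adding a positive-definite function never destroys positive semi-definiteness, so if smallness of $c$ mattered at all it would be because $\log(1/|\cdot|)$ alone is already an admissible kernel on $D\times D$ --- and that is precisely what cannot be assumed for a general bounded $D\subset\R^d$ ($\log(1/|\cdot|)$ is not positive definite on $\R^d$, and its restriction to large domains or to higher-dimensional domains need not be positive semi-definite either). The correction $h$ therefore has to be added with \emph{sufficient size}, not kept small, and even a large multiple of your particular choice $\eta\ast\check\eta$ need not work, since one must dominate the negative part of $\mathcal{F}\bigl(\psi_0\log(1/|\cdot|)\bigr)$ (with $\psi_0$ a cutoff equal to $1$ on a ball containing $D-D$) on a compact set near the origin by $c|\widehat{\eta}(\xi)|^2$, and $\widehat{\eta}$ may vanish there. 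This is exactly where the paper does its work: it shows $\mathcal{F}\bigl(\psi_0\log(1/|\cdot|)\bigr)$ is smooth and positive outside a neighbourhood of the origin, then picks a positive $r\in C_c^\infty(\R^d)$ with $\mathcal{F}\bigl(\psi_0\log(1/|\cdot|)\bigr)+r\geq 0$ everywhere and sets $h=\mathcal{F}^{-1}r$, which simultaneously makes the kernel a covariance and guarantees $\widehat{h}=r\geq 0$, the property your subsequent argument needs. Replacing your ``small $c$'' clause by this (or an equivalent) construction closes the gap; the rest of your proof then goes through.
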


\begin{proof}
    Let $\psi_0\in C^\infty_{\textcolor{black}{c}}(\R^d)$ be a positive cut-off function equal to $1$ on $B(0,4R)$, where $R:={\rm diam \,}(D)$. Since the Fourier-transform of $\log (1/|\cdot|)$ is equal (up to a multiplicative constant) to $|\xi|^{-d}$ for large $\xi$, we infer that the Fourier-transform of $\psi_0\log (1/|\cdot|)$ is a smooth function that is positive outside a suitable neighborhood of the origin. We then choose a radially symmetric and positive $r\in C_{\textcolor{black}{c}}^\infty (\R^d)$ such that $\mathcal{F}{\left(\psi_0\log (1/|\cdot|)\right)}+r\geq 0$ everywhere. Then
    \begin{align*}
        \psi_0(z-y)\log (1/|z-y|)+h(z-y),\qquad \textnormal{where}\quad  h:=\mathcal{F}^{-1}{r},
    \end{align*}
    is a covariance  on $\R^d$, and we denote  by $Y$ the Gaussian distribution valued field with this covariance.  We claim that the field $X:=Y_{|D}$ is the field we are looking for.

    For that end, by using the assumption on the support of  $\psi_0$  we note that the operator $G := G_{\beta^2}$ for the field $X$ has the kernel  $H(z-y)$ ($z,y\in D$), where
    \begin{align*}
        H(z)\; :\; =e^{\beta^2r(z)}|z|^{-\beta^2}.
    \end{align*}
    We extend $H$ to all of $\R^d$ by this formula and note that
    \begin{align*}
        a:= \mathcal{F}{\left(e^{\beta^2r(z)}-1\right)}=\sum_{n=1}^\infty\frac{\beta^{2n} r^{\ast n}}{n!}
    \end{align*}
    where $r^{\ast n} = r \ast r \ast \cdots \ast r$ is the $n$-fold convolution. Thus, $a\geq 0$ and $a$ is a Schwarz function as it is the Fourier transform of a test function. Thus,
    \begin{align*}
        \mathcal{F} H=|\cdot|^{-2s_\beta} + a \ast |\cdot|^{-2s_\beta}
    \end{align*}
    that decays as $|\cdot|^{-2s_\beta}$. The claim follows, since if $\psi\in C^\infty_{\textcolor{black}{c}}(D)$, we obtain by the previous estimate and Plancherel identity that for some constant $c$ independent of $\psi$ that
    \begin{align*}
        \int_{D} \psi(z)(G\psi)(z)\,dz \; =\; \int_{\R^d}\mathcal{F} H(\xi)|\mathcal{F}\psi(\xi)|^2\, d\xi\;  \geq c \; \|\psi\|^2_{W_0^{-s_\beta,2}(\R^d)},
    \end{align*}
    where the last inequality follows as $\mathcal{F}{H}(\xi)\geq c |\xi|^{-2s_\beta}$.

\end{proof}
\begin{lemma}\label{le:inversion}
    Let $\alpha\in(0,d)$. Assume that the domain $D$ is smooth and bounded and the field $X$ is $\beta$-non-degenerate. Then the  mapping $G_{\alpha}$ given in~\cref{eq:operator-G} extends to a bijective map
    \begin{align*}
        G: W_0^{-\frac{d-\alpha}{2},2}(D)\to  W^{\frac{d-\alpha}{2},2}(D).
    \end{align*}
\end{lemma}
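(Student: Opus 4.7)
The plan is to apply the Lax--Milgram theorem to the symmetric bilinear form
\begin{equation*}
B(\psi,\phi) \;:=\; \int_D \psi(z)\, G_\alpha\phi(z)\, dz
\end{equation*}
on $W_0^{-s,2}(D) \times W_0^{-s,2}(D)$ with $s := (d-\alpha)/2$, and then conclude by identifying $W^{s,2}(D)$ with the topological dual of $W_0^{-s,2}(D)$ through the standard pairing. Once $B$ is shown to be bounded and coercive, Lax--Milgram produces, for each $L \in (W_0^{-s,2}(D))^*$, a unique $\psi \in W_0^{-s,2}(D)$ with $B(\psi,\phi) = \langle L,\phi\rangle$ for every $\phi \in W_0^{-s,2}(D)$; this reads precisely as $G_\alpha\psi = L$ in $W^{s,2}(D)$, which is the asserted bijection.

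First I would prove the boundedness of $B$, which is the analytic core of the argument. Taking $\psi,\phi\in C_c^\infty(D)$ and extending them by zero to $\mathbb{R}^d$, split the kernel as
\begin{equation*}
|z-y|^{-\alpha} e^{\alpha g(z,y)} \;=\; |z-y|^{-\alpha} + |z-y|^{-\alpha}\bigl(e^{\alpha g(z,y)}-1\bigr).
\end{equation*}
The leading term is a Riesz kernel whose Fourier symbol is (up to a constant) $|\xi|^{-(d-\alpha)} = |\xi|^{-2s}$, and Plancherel gives
\begin{equation*}
\int_{\mathbb{R}^d} \psi(z)\,(|\cdot|^{-\alpha}\ast\phi)(z)\, dz \;=\; c\int_{\mathbb{R}^d}|\xi|^{-2s}\,\hat\psi(\xi)\,\overline{\hat\phi(\xi)}\, d\xi \;\le\; C\,\|\psi\|_{W^{-s,2}(\mathbb{R}^d)}\,\|\phi\|_{W^{-s,2}(\mathbb{R}^d)},
\end{equation*}
using that for functions supported in the fixed bounded set $D$ the weights $|\xi|^{-2s}$ and $(1+|\xi|^2)^{-s}$ agree up to a low-frequency discrepancy that is tamed by the analyticity of $\hat\psi$ and $\hat\phi$. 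As in the proof of~\cref{le:non-degenerate}, the smooth factor $e^{\alpha g(z,y)}-1$ contributes a kernel whose Fourier transform decays at least as fast as $|\xi|^{-2s}$, giving the same upper bound. Taking infima over extensions to $\mathbb{R}^d$ transfers these estimates to $W_0^{-s,2}(D)$.

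The coercivity of $B$ is precisely the non-degeneracy hypothesis (applied with $\beta^2 = \alpha$, which is the natural reading consistent with~\cref{le:non-degenerate} that supplies fields non-degenerate at every parameter): for $\psi\in C_c^\infty(D)$,
\begin{equation*}
B(\psi,\psi) \;\ge\; c_0\,\|\psi\|_{W_0^{-s,2}(D)}^{2},
\end{equation*}
and this extends by density to all of $W_0^{-s,2}(D)$ once boundedness is known. Lax--Milgram then produces a bounded inverse $B^{-1} : (W_0^{-s,2}(D))^* \to W_0^{-s,2}(D)$, and the isometric identification $(W_0^{-s,2}(D))^* \simeq W^{s,2}(D)$ rewrites this as the announced continuous bijection $G_\alpha : W_0^{-s,2}(D) \to W^{s,2}(D)$; injectivity is already a consequence of coercivity, surjectivity follows from the Lax--Milgram output combined with the duality identification.

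The main obstacle I would anticipate is the careful handling of the boundedness estimate on the bounded domain $D$: controlling the low-frequency singularity of the Riesz symbol $|\xi|^{-2s}$ when restricted to Fourier transforms of functions supported in $D$, and absorbing the smooth perturbation $e^{\alpha g}-1$ uniformly. This is a technical matter of extension/restriction arguments for fractional Sobolev spaces on bounded domains and should not cause conceptual difficulty beyond what already appears in the proof of~\cref{le:non-degenerate}.
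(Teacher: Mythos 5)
Your proposal is correct and rests on exactly the same two pillars as the paper's proof: boundedness of $G_\alpha$ from $W_0^{-s,2}(D)$ to $W^{s,2}(D)$ via a Fourier symbol estimate for the Riesz-type kernel plus its smooth perturbation (your splitting off of $|z-y|^{-\alpha}(e^{\alpha g}-1)$ is legitimate under the standing assumption $g(z,y)=h(z-y)$, where the paper instead truncates the Riesz kernel and absorbs the smooth two-variable factor through a superposition of modulation operators, which also disposes of your low-frequency issue at the source), and the non-degeneracy bound read at parameter $\sqrt{\alpha}$ serving as coercivity. Your Lax--Milgram step is just a packaged version of the paper's argument (coercivity gives injectivity and closed range; the duality $(W_0^{-s,2}(D))^{*}\simeq W^{s,2}(D)$ plus an annihilator argument gives surjectivity), so the two proofs are essentially identical.
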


\begin{proof}
    Let $s_\alpha = (d-\alpha)/2$. We first verify that $G$ is bounded between the stated spaces. For that end, let $G_0$ denote the operator with the kernel $H_0(z-y)$, where
    \begin{align*}
        H_0(z):= |z|^{2s_\alpha -d}\psi_0(z),
    \end{align*}
    with $\psi_0$ as in the previous lemma. Now
    \begin{align*}
        \mathcal{F} H_0=\mathcal{F} \psi_0 * |\cdot|^{-2s_\alpha} \lesssim (1+|\cdot|^2)^{-s_\alpha},
    \end{align*}
    \textcolor{black}{where $\lesssim$ denotes an inequality that holds up to an unimportant multiplicative constant.} Since $G_0$ is a convolution operator, we immediately obtain that $G_0: W_0^{-s_\alpha,2}(D)\to  W^{s_\alpha,2}(D)$. In order to consider the operator $G$, denote for $\lambda\in\R^d$  by $M_\lambda$ the multiplication operator $f\mapsto e^{i\lambda\cdot  x}f$. The norm of $M_\lambda$ on both of the spaces  $W_0^{-s_\alpha,2}(D)$
    and $W^{s_\alpha,2}(D)$ is bounded by $c(1+|\lambda|)^a$ with some fixed $a$, since on the Fourier side $M_\lambda$ corresponds to translation by $\lambda$. The action of $G$ in our situation is given by the kernel
    $k(z,y)H_0(z-y)$, where $k(z,y):= \psi_0(z)\psi_0(y)e^{\alpha g(z,y)}$. The boundedness of $G$ is then seen by writing
    \begin{align*}
        G\;=\; G_0+ \int_{\R^d\times\R^d}
        \mathcal{F} k
        (\lambda_1,\lambda_2)\; M_{\lambda_1}G_0M_{\lambda_2}\; d\lambda_1d\lambda_2
    \end{align*}
    and use the fact  that $\mathcal{F} k$ is a Schwarz test function.

    Now the rest of the statement follows easily just by the non-degeneracy of $X_{\sqrt{\alpha}}$ and the fact that because $D$ is smooth, the spaces $W_0^{-s_\alpha,2}(D)$ and  $W^{s_\alpha,2}(D)$ are dual spaces of each others with respect to the $L^2$-pairing. Namely, the non-degeneracy condition implies that $G: W_0^{-s_\alpha,2}(D)\to  W^{s_\alpha,2}(D)$ is lower bounded. Hence, it is injective and its image, call it $M$, is a closed subspace of $W^{s_\alpha,2}(D)$. If $M$ would not be all of  $W^{s_\alpha,2}(D)$, we could find a  non-zero dual element $\psi\in W_0^{-s_\alpha,2}(D)$ which \textcolor{black}{annihilates} all elements in $M$. Especially $\psi$ should annihilate $G\psi$, but this contradicts the non-degeneracy assumption.
\end{proof}
\begin{remark}
\label{remark:general-mapping}
Note that by the above arguments we actually see that $G_\delta: W_0^{\alpha + \delta - d,2}(D) \to W^{\alpha,2}(D)$ for any $\delta\in (0,d)$ and $\alpha>0$ such that $\alpha + \delta<d$. Indeed, when $G_\delta$ is the convolution operator with kernel $H_\delta$, we have 
$\mathcal{F}(G_\delta u) = \mathcal{F}H_\delta \mathcal{F}u$, where 
$$
\mathcal{F}H_\delta \lesssim (1+|\cdot|^2)^{-\frac{d-\delta}{2}}.
$$
Thus we have 
\begin{align*}
    \int_{\mathbb{R}^d} (1+|\xi|^2)^\alpha |\mathcal{F}(G_\delta u)(\xi)|^2d\xi
    =&   
    \int_{\mathbb{R}^d} (1+|\xi|^2)^\alpha |\mathcal{F} H_\delta (\xi)\mathcal{F} u(\xi)|^2d\xi\\
    \\
    \lesssim &
    \int_{\mathbb{R}^d} (1+|\xi|^2)^{\alpha+\delta-d} | \mathcal{F} u(\xi)|^2d\xi
\end{align*}
which is finite for any $u \in W_0^{\alpha+\delta-d,2}(\mathbb{R}^d)$ by \cref{def:frac-sobolev-general}. Taking into account \cref{eq:frac-sobolev-norm-restricted}, this implies that  $G_\delta$ is a bounded operator from $W_0^{\alpha + \delta - 1,2}(D)$ into $W^{\alpha,2}(D)$. Finally, the case of general operator $G$ follows by the arguments of the proof of \cref{le:inversion}. 
\end{remark}
We obtain immediately the following result characterizing random variables in $L^2_\beta(\Omega)$.
\begin{cor}\label{cor:L2beta-characterisation}
    Let $D \subset \mathbb{R}^d$ be a smooth and bounded domain and let $\beta\in(0,\sqrt{d})$. Suppose $X$ is a $\beta$-non-degenerate and let $Z \in L^2_\beta(\Omega)$. Then $Z$ has the form
    \begin{equation*}
        Z = \int_D\varphi(z)d\mu_\beta(z),
    \end{equation*}
    for some $\varphi \in W_0^{-s_\beta,2}(D)$, where $s_\beta= \frac{d-\beta^2}{2}$.
\end{cor}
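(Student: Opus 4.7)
The strategy is to show that the linear map $T\colon \varphi \mapsto \int_D \varphi(z)\, d\mu_\beta(z)$, initially defined on $C_c^\infty(D)$, extends to an isomorphism from $W_0^{-s_\beta,2}(D)$ onto $L^2_\beta(\Omega)$. Once this is established, the corollary follows because every $Z \in L^2_\beta(\Omega)$ is by definition an $L^2$-limit of elements of the form $T\varphi_n$ with $\varphi_n \in C_c^\infty(D)$, and this limit corresponds via the isomorphism to a unique $\varphi \in W_0^{-s_\beta,2}(D)$ which one then declares to be the distribution representing $Z$.

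The first step is the variance identity. Since $\beta^2 < d$, we are in the $L^2$-range of the GMC, and for $\varphi \in C_c^\infty(D)$ one computes directly from the definition of $\mu_\beta$ as an $L^2$-limit of $\mu_{\beta,\varepsilon}$ that
\begin{equation*}
    \EX\bigl[(T\varphi)^2\bigr] \;=\; \int_D \int_D \varphi(z)\varphi(y)\, e^{\beta^2 R(z,y)}\,dz\,dy
    \;=\; \langle \varphi, G_{\beta^2}\varphi\rangle_{L^2(D)}.
\end{equation*}
This equality uses Fubini together with the fact that $\EX[e^{\diamond \beta X_\varepsilon(z)}e^{\diamond \beta X_\varepsilon(y)}] = e^{\beta^2 \EX[X_\varepsilon(z)X_\varepsilon(y)]}$ and passing to the limit as $\varepsilon \to 0$, which is legitimate because the kernel $e^{\beta^2 R(z,y)} \asymp |z-y|^{-\beta^2}$ is locally integrable on $D \times D$ in the $L^2$-range.

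The second step is to sandwich this quadratic form between multiples of $\|\varphi\|_{W_0^{-s_\beta,2}(D)}^2$. The lower bound $\langle \varphi, G_{\beta^2}\varphi\rangle \geq c_0 \|\varphi\|_{W_0^{-s_\beta,2}(D)}^2$ is exactly the $\beta$-non-degeneracy assumption \cref{eq:full}. The matching upper bound follows from \cref{le:inversion} (or the observation in \cref{remark:general-mapping}): $G_{\beta^2}$ maps $W_0^{-s_\beta,2}(D)$ boundedly into its topological dual $W^{s_\beta,2}(D)$, so
\begin{equation*}
    \langle \varphi, G_{\beta^2}\varphi\rangle \;\leq\; \|\varphi\|_{W_0^{-s_\beta,2}(D)}\, \|G_{\beta^2}\varphi\|_{W^{s_\beta,2}(D)} \;\leq\; C\,\|\varphi\|_{W_0^{-s_\beta,2}(D)}^2.
\end{equation*}
Combining the two estimates yields that $T\colon (C_c^\infty(D), \|\cdot\|_{W_0^{-s_\beta,2}}) \to L^2(\Omega)$ is a bounded linear map with a two-sided norm equivalence.

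The third step is the extension and surjectivity argument. Since $C_c^\infty(D)$ is dense in $W_0^{-s_\beta,2}(D)$, the two-sided bound allows $T$ to extend by continuity to an injective bounded linear map $\overline{T}\colon W_0^{-s_\beta,2}(D) \to L^2(\Omega)$ whose image $\overline{T}(W_0^{-s_\beta,2}(D))$ is a closed subspace of $L^2(\Omega)$ containing $T(C_c^\infty(D))$. By \cref{def:L2-GMC}, this image is precisely $L^2_\beta(\Omega)$. Hence every $Z \in L^2_\beta(\Omega)$ equals $\overline{T}\varphi$ for a unique $\varphi \in W_0^{-s_\beta,2}(D)$, which we denote by $\int_D \varphi(z)\, d\mu_\beta(z)$, proving the claim.

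The main subtlety I expect is purely notational: since elements of $W_0^{-s_\beta,2}(D)$ are distributions rather than functions, the symbol $\int_D \varphi\, d\mu_\beta$ must be interpreted as the extension $\overline{T}\varphi$, not as a pointwise pairing against the random measure. Once this is made explicit, the rest is a standard density argument powered entirely by the two-sided estimate, so the genuine mathematical content is concentrated in the variance identity and in invoking the non-degeneracy hypothesis for the lower bound.
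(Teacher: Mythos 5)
Your proposal is correct and follows essentially the same route as the paper: the paper's proof likewise computes $\EX[(T\psi)^2]=\langle\psi,G_{\beta^2}\psi\rangle$ via \cref{eq:wick-exponential-sum-identity}, invokes the $\beta$-non-degeneracy together with the continuity of $G_{\beta^2}$ to get the two-sided norm equivalence with $\|\cdot\|_{W_0^{-s_\beta,2}(D)}$, and concludes by taking the closure in $L^2(\Omega)$. Your version merely makes the density/extension step and the interpretation of $\int_D\varphi\,d\mu_\beta$ for distributional $\varphi$ explicit, which the paper leaves implicit.
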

\begin{proof}
    \textcolor{black}{First,} for $\textcolor{black}{Y} = \int_{D} \textcolor{black}{\psi(z)} d\mu_\beta(z)$ with $\textcolor{black}{\psi} \in C_{\textcolor{black}{c}}^\infty(D)$ we have, using \eqref{eq:wick-exponential-sum-identity}, that
    \begin{align*}
        \EX [\textcolor{black}{Y}^2] &=  \textcolor{black}{\int_D \int_D \psi(z)\psi(y) \EX\left[e^{\diamond X_\beta(z)} e^{\diamond X_\beta(y)}  \right] dy \, dz} \\
        & = 
        \int_D \int_D \textcolor{black}{\psi}(z)\textcolor{black}{\psi}(y)\exp(\beta^2 \EX [X(\textcolor{black}{z})X(\textcolor{black}{y})])  dy \, dz \\
        & = 
        \int_D \textcolor{black}{\psi}(z)G_{\beta^2}\textcolor{black}{\psi}(z) dz \\
        & = 
        \textcolor{black}{ \langle\textcolor{black}{\psi}, G_{\beta^2} \textcolor{black}{\psi} \rangle_{W_0^{-s_{\beta},2}(D), W^{s_{\beta},2}(D)}, }
    \end{align*}
    \textcolor{black}{where the last notation refers to the usual dual pairing. By the $\beta$-non-degeneracy assumption in Definition \ref{def:X} and the continuity of $G_{\beta^2}$, this pairing defines a norm that is equivalent to the usual $W_0^{-s_\beta,2}(D)$ norm.} The claim now follows by taking the closure in $L^2(\Omega)$.
\end{proof}
We are now ready to define the $S$-transform of square integrable random variables at a point $X_\beta(z)$. For this, observe that for $V \in L^2_{\beta}(\Omega)$ of form $V = \int_{D} \psi(z) d\mu_\beta(z)$, where $\psi \in W_0^{-s_\beta,2}(D)$ with $s_\beta= \frac{d-\beta^2}{2}$, formal computations yield
\begin{equation*}
    ZV = \int_D \psi(z)Ze^{\diamond X_\beta(z)}dz
\end{equation*}
for any $Z \in L^2(\Omega)$ from which taking expectation gives, formally,
\begin{equation*}
    \EX [ZV] = \int_D \psi(z)(SZ)(X_\beta(z))dz.
\end{equation*}
More precisely, the mapping $\psi \mapsto \EX [ZV]$ is a continuous functional on $W^{-s_\beta,2}_0(D)$. By duality, there exists $\iota \in W^{s_\beta,2}(D)$ such that
\begin{equation*}
    \EX [ZV] = \int_D \psi(z) \iota(z)dz.
\end{equation*}
This leads to the following definition.
\begin{definition}[$S$-transform in $L^2_\beta(\Omega)$]\label{def:S-transform-log}
    Let $Z \in L^2(\Omega)$ and set $s_\beta= \frac{d-\beta^2}{2}$. For $z\in D$ we identify $(SZ)(X_\beta(z))$ as the (unique) element of $W^{s_\beta,2}(D)$ defined through duality
    \begin{equation*}
        \EX [ZV] = \int_D \psi(z)(SZ)(X_{\beta}(z))dz
    \end{equation*}
    for all $V = \int_{D} \psi(z) d\mu_\beta(z) \in L^2_{\beta}(\Omega)$.
\end{definition}
Note that the definition makes sense for almost every $z$ which is sufficient for our purposes. The following result allows us to characterize the subspace $L^2_\beta(\Omega)$ (or projections into it) via the $S$-transform at points $X_\beta(z)$.
\begin{lemma}
    \textcolor{black}{Assume that the domain $D$ is smooth and bounded and the field $X$ is $\beta$-non-degenerate.}  Let $Z \in L^2(\Omega)$ with a decomposition $Z = Z_\beta + Z'_\beta$, where $Z_\beta = \int_D \varphi(s)d\mu_\beta(s) \in L^2_\beta(\Omega)$ is the orthogonal projection into the subspace $L^2_\beta(\Omega) \subset L^2(\Omega)$. Then we have
    \begin{equation*}
        (SZ_\beta)(X_\beta(z)) = G_{\beta^2}\varphi(z)
    \end{equation*}
    and
    \begin{equation*}
        (SZ'_\beta)(X_\beta(z)) = 0.
    \end{equation*}
\end{lemma}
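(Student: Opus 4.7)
The plan is to exploit linearity of the $S$-transform together with the defining duality in \cref{def:S-transform-log} and the orthogonality of the decomposition $Z=Z_\beta+Z'_\beta$. I will treat the two identities separately.

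For the vanishing of $(SZ'_\beta)(X_\beta(z))$, the idea is simply that $Z'_\beta$ is orthogonal to every $V\in L^2_\beta(\Omega)$. Concretely, for any $V=\int_D\psi(z)d\mu_\beta(z)$ with $\psi\in C_c^\infty(D)$, the orthogonality gives $\EX[Z'_\beta V]=0$, so by \cref{def:S-transform-log} we obtain
\begin{equation*}
    \int_D\psi(z)(SZ'_\beta)(X_\beta(z))\,dz=0.
\end{equation*}
Since $C_c^\infty(D)$ is dense in $W_0^{-s_\beta,2}(D)$, and $(SZ'_\beta)(X_\beta(\cdot))$ is by definition an element of the dual $W^{s_\beta,2}(D)$, it follows that $(SZ'_\beta)(X_\beta(z))=0$ in $W^{s_\beta,2}(D)$.

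For the identity $(SZ_\beta)(X_\beta(z))=G_{\beta^2}\varphi(z)$, the strategy is to compute $\EX[Z_\beta V]$ directly for test $\psi$ and then invoke duality. Assume first that $\varphi,\psi\in C_c^\infty(D)$. Approximating $d\mu_\beta$ by $d\mu_{\beta,\varepsilon}$ as in \cref{lem:mollification-of-log-correlated-field}, using \cref{eq:wick-exponential-sum-identity} to get $\EX[e^{\diamond X_{\beta,\varepsilon}(s)}e^{\diamond X_{\beta,\varepsilon}(z)}]=\exp(\beta^2\EX[X_\varepsilon(s)X_\varepsilon(z)])$, and passing to the limit with Fubini together with part (iii) of \cref{lem:mollification-of-log-correlated-field}, one obtains
\begin{equation*}
    \EX[Z_\beta V]=\int_D\int_D\varphi(s)\psi(z)e^{\beta^2\EX[X(s)X(z)]}\,ds\,dz=\int_D\psi(z)\,G_{\beta^2}\varphi(z)\,dz,
\end{equation*}
by the very definition of $G_{\beta^2}$ in \cref{eq:operator-G}. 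The extension to general $\varphi\in W_0^{-s_\beta,2}(D)$ (as given by \cref{cor:L2beta-characterisation}) and $\psi\in W_0^{-s_\beta,2}(D)$ then follows by density, using the $L^2(\Omega)$-continuity of $\varphi\mapsto\int_D\varphi\,d\mu_\beta$ (a consequence of the equivalent norm computation performed in the proof of \cref{cor:L2beta-characterisation}) together with the boundedness $G_{\beta^2}\colon W_0^{-s_\beta,2}(D)\to W^{s_\beta,2}(D)$ from \cref{le:inversion}. Comparing with the defining identity in \cref{def:S-transform-log}, we deduce
\begin{equation*}
    \int_D\psi(z)(SZ_\beta)(X_\beta(z))\,dz=\int_D\psi(z)\,G_{\beta^2}\varphi(z)\,dz
\end{equation*}
for a dense family of $\psi$, whence $(SZ_\beta)(X_\beta(z))=G_{\beta^2}\varphi(z)$ as elements of $W^{s_\beta,2}(D)$.

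The only technical point requiring care is the justification of the Fubini/limit argument that identifies $\EX[Z_\beta V]$ with $\int_D\psi\,G_{\beta^2}\varphi\,dz$ when $\varphi$ is merely a distribution in $W_0^{-s_\beta,2}(D)$. This will be the main obstacle; I would handle it by first proving the identity for smooth $\varphi$ as above and then passing to the limit using the isometry $\|\int_D\varphi\,d\mu_\beta\|_{L^2(\Omega)}^2=\langle\varphi,G_{\beta^2}\varphi\rangle$ (which, by $\beta$-non-degeneracy, is equivalent to $\|\varphi\|_{W_0^{-s_\beta,2}(D)}^2$) on the left-hand side, and the boundedness of $G_{\beta^2}$ on the right-hand side. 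Everything else is a bookkeeping exercise combining orthogonality, linearity of $S$, and the definition of $S$ at $X_\beta(z)$.
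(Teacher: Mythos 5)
Your proposal is correct and follows essentially the same route as the paper: the vanishing of $(SZ'_\beta)(X_\beta(z))$ via orthogonality and the duality in the definition of the $S$-transform, and the identity $(SZ_\beta)(X_\beta(z))=G_{\beta^2}\varphi(z)$ via the covariance computation $\EX[Z_\beta V]=\int_D\psi\,G_{\beta^2}\varphi\,dz$ combined with \cref{le:inversion}. The paper states this more tersely (performing the covariance computation formally, as it was already carried out in the proof of \cref{cor:L2beta-characterisation}), whereas you spell out the mollification, Fubini, and density steps; this is just added detail, not a different argument.
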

\begin{proof}
    The claim $(SZ'_\beta)(X_\beta(z)) = 0$ follows immediately from~\cref{def:S-transform-log} by observing that then
    \begin{equation*}
        \int_D \psi(z)(SZ'_\beta)(X_\beta(z)) dz = 0 \quad \forall \psi \in W_0^{-s_\beta,2}(D).
    \end{equation*}
    For $Z_\beta$, obtain that
    \begin{equation*}
        \EX [Z_\beta V] = \EX \left[\int_D \int_D \psi(z)\varphi(y)d \mu_\beta(z)d\mu_\beta(y)\right] = \int_D \psi(z) G_{\beta^2}\varphi(z) dz,
    \end{equation*}
    where now, by~\cref{le:inversion}, $G_{\beta^2}\varphi(z) \in W^{s_\beta,2}(D)$. This completes the proof.
\end{proof}
This leads to the following characterization result of random variables in $L^2_\beta(\Omega)$ via the $S$-transform:
\begin{lemma}\label{lemma:S-uniqueness}
    \textcolor{black}{Assume that the domain $D$ is smooth and bounded and the field $X$ is $\beta$-non-degenerate.} The $S$-transform at points $X_\beta(z)$ determine random variables $Z \in L^2_\beta(\Omega)$ uniquely.
\end{lemma}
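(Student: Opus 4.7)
The proof proposal is a short direct argument chaining together the structural results that have already been established in Section \ref{sec:projection}. The plan is to reduce the uniqueness statement to the injectivity of the potential operator $G_{\beta^2}$, which is exactly the content of \cref{le:inversion}.

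First, suppose that $Z_1, Z_2 \in L^2_\beta(\Omega)$ satisfy $(SZ_1)(X_\beta(z)) = (SZ_2)(X_\beta(z))$ for almost every $z \in D$. By linearity of the $S$-transform (which is immediate from the linearity of the dual pairing in \cref{def:S-transform-log}), the difference $Z := Z_1 - Z_2$ lies in $L^2_\beta(\Omega)$ and satisfies $(SZ)(X_\beta(z)) = 0$ for a.e.\ $z$.

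Second, since $Z \in L^2_\beta(\Omega)$, \cref{cor:L2beta-characterisation} yields a representation
\begin{equation*}
    Z = \int_D \varphi(s)\, d\mu_\beta(s)
\end{equation*}
for some $\varphi \in W_0^{-s_\beta,2}(D)$ with $s_\beta = (d-\beta^2)/2$. Applying the preceding lemma (the one just above the statement to be proved), we have the pointwise (in the distributional sense in $W^{s_\beta,2}(D)$) identity $(SZ)(X_\beta(z)) = G_{\beta^2}\varphi(z)$. Combining with the previous step gives $G_{\beta^2}\varphi \equiv 0$ as an element of $W^{s_\beta,2}(D)$.

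Third, by \cref{le:inversion} the operator $G_{\beta^2} \colon W_0^{-s_\beta,2}(D) \to W^{s_\beta,2}(D)$ is a bijection, in particular injective. Hence $\varphi = 0$ in $W_0^{-s_\beta,2}(D)$, and therefore $Z = 0$ almost surely, i.e.\ $Z_1 = Z_2$. No genuine obstacle arises here; the entire content of the lemma is that the $\beta$-non-degeneracy hypothesis on $X$ has been arranged precisely so that \cref{le:inversion} gives injectivity of $G_{\beta^2}$, and the chain of identifications developed in the preceding two lemmas transports this injectivity from a statement about the deterministic potential operator to a statement about the $S$-transform at $X_\beta(z)$.
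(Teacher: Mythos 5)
Your proof is correct and follows essentially the same route as the paper's: representing elements of $L^2_\beta(\Omega)$ via \cref{cor:L2beta-characterisation}, identifying the $S$-transform with $G_{\beta^2}\varphi$ through the preceding lemma, and invoking the injectivity of $G_{\beta^2}$ from \cref{le:inversion}. The only cosmetic difference is that you pass to the difference $Z_1-Z_2$ and show $\varphi=0$, whereas the paper directly concludes $\varphi_1=\varphi_2$; the substance is identical.
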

\begin{proof}
    Suppose that for random variables $Z_1 = \int_D \varphi_1(y)d\mu_\beta(y)$ and $Z_2 = \int_D \varphi_2(y)d\mu_\beta(y)$ we have $(SZ_1)(X_\beta(z)) = (SZ_2)(X_\beta(z))$ for almost every $z\in D$. Then $G_{\beta^2}\varphi_1(z) = G_{\beta^2}\varphi_2(z)$ for almost every $z\in D$ from which $\varphi_1=\varphi_2$ follows by~\cref{le:inversion}.
\end{proof}

We now leave the general $d$-dimensional setting and focus on the case $d=1$ with $D=(0,T)$. Hence, in the sequel, we have $s_{\beta}=\frac{1-\beta^2}{2}$. By using the $S$-transform, we are able to give explicit expressions for the projections into $L^2_{\beta'}(\Omega)$ of random variables in $L^2_\beta(\Omega)$, for $0< \beta, \beta' < 1$.
\begin{prop}\label{prop:projection}
    Let $\beta,\beta' \in (0,1)$ and let
    \begin{equation*}
        Z = \int_D \varphi(z)d\mu_\beta(z) \in L^2_\beta(\Omega)
    \end{equation*}
    for some $\varphi \in W_0^{-\frac{1-\beta^2}{2},2}(0,T)$. Then the projection of $Z$ into the subspace $L_{\beta'}^2(\Omega)$ is given by
    \begin{equation*}
        Z_{\beta'} = \int_0^T \varphi_{\beta'}(z)d\mu_{\beta'}(z),
    \end{equation*}
    where $\varphi_{\beta'} \in W_0^{-\frac{1-(\beta')^2}{2},2}(0,T)$ solves
    $G_{\beta\beta'}\varphi = G_{(\beta')^2}\varphi_{\beta'}$.
\end{prop}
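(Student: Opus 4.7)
The plan is to invoke the characterization in Lemma~\ref{lemma:S-uniqueness} (applied at parameter $\beta'$): the orthogonal projection $Z_{\beta'}\in L_{\beta'}^2(\Omega)$ is uniquely determined by $\EX[Z V]=\EX[Z_{\beta'}V]$ for every $V\in L_{\beta'}^2(\Omega)$, and this is precisely the identity $(SZ)(X_{\beta'}(z))=(SZ_{\beta'})(X_{\beta'}(z))$ in the sense of Definition~\ref{def:S-transform-log}. Hence it suffices to exhibit an element of $L_{\beta'}^2(\Omega)$ whose $S$-transform at $X_{\beta'}(z)$ matches that of $Z$, and then invoke Lemma~\ref{lemma:S-uniqueness} for uniqueness.

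First I would compute the two $S$-transforms explicitly. For a test element $V=\int_D \psi(z)\,d\mu_{\beta'}(z)$ with $\psi\in C_c^\infty(D)$, Fubini together with the identity $\EX\bigl[e^{\diamond X_\beta(y)}e^{\diamond X_{\beta'}(z)}\bigr]=e^{\beta\beta'\EX[X(y)X(z)]}$ (which follows from \eqref{eq:wick-exponential-sum-identity} upon taking expectations and using $\EX[e^{\diamond(\cdot)}]=1$) yields
\begin{equation*}
    \EX[ZV]=\int_D\!\!\int_D\varphi(y)\psi(z)\,e^{\beta\beta'\EX[X(y)X(z)]}\,dy\,dz=\int_D \psi(z)\,(G_{\beta\beta'}\varphi)(z)\,dz,
\end{equation*}
so $(SZ)(X_{\beta'}(z))=(G_{\beta\beta'}\varphi)(z)$. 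The analogous computation for the candidate $Z_{\beta'}=\int_D \varphi_{\beta'}(z)\,d\mu_{\beta'}(z)$ gives $(SZ_{\beta'})(X_{\beta'}(z))=(G_{(\beta')^2}\varphi_{\beta'})(z)$, whence the identity $G_{\beta\beta'}\varphi=G_{(\beta')^2}\varphi_{\beta'}$ is exactly the required matching condition.

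The remaining step is to produce $\varphi_{\beta'}\in W_0^{-\frac{1-(\beta')^2}{2},2}(0,T)$ solving this equation. By Lemma~\ref{le:inversion}, $G_{(\beta')^2}$ is a bijection from $W_0^{-\frac{1-(\beta')^2}{2},2}(D)$ onto $W^{\frac{1-(\beta')^2}{2},2}(D)$, so I only need to verify that $G_{\beta\beta'}\varphi$ lies in this target space. For this I would apply Remark~\ref{remark:general-mapping} with $\delta=\beta\beta'$ and $\alpha=\frac{1-(\beta')^2}{2}$, which gives boundedness of $G_{\beta\beta'}\colon W_0^{\frac{1-(\beta')^2}{2}+\beta\beta'-1,2}(D)\to W^{\frac{1-(\beta')^2}{2},2}(D)$, and check that $\varphi\in W_0^{-\frac{1-\beta^2}{2},2}(D)$ is contained in the domain. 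The required inclusion reduces to $-\frac{1-\beta^2}{2}\geq \frac{1-(\beta')^2}{2}+\beta\beta'-1$, which after rearrangement is the elementary inequality $(\beta-\beta')^2\geq 0$.

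The only real obstacle is juggling the two distinct Sobolev scales $-\frac{1-\beta^2}{2}$ and $-\frac{1-(\beta')^2}{2}$ through $G_{\beta\beta'}$; once the inclusion of Sobolev spaces has been verified as above, existence and uniqueness of $\varphi_{\beta'}$ follow from the inversion lemma, and Lemma~\ref{lemma:S-uniqueness} identifies the resulting $Z_{\beta'}$ as the orthogonal projection of $Z$ onto $L^2_{\beta'}(\Omega)$.
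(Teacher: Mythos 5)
Your proposal is correct and follows essentially the same route as the paper's proof: computing $(SZ)(X_{\beta'}(z))=G_{\beta\beta'}\varphi$ and $(SZ_{\beta'})(X_{\beta'}(z))=G_{(\beta')^2}\varphi_{\beta'}$, verifying $G_{\beta\beta'}\varphi\in W^{\frac{1-(\beta')^2}{2},2}(0,T)$ via Remark~\ref{remark:general-mapping} and the same reduction to $(\beta-\beta')^2\geq 0$, and concluding by the invertibility of $G_{(\beta')^2}$ from Lemma~\ref{le:inversion}. No gaps.
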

\begin{proof}
    Let $Z = \int_0^T \varphi(z)d\mu_\beta(z) \in L^2_\beta(\Omega)$ and $V = \int_0^T \psi(z)d\mu_{\beta'}(z) \in L^2_{\beta'}(\Omega)$ be a test random variable. It is enough to check that $\EX [Z_{\beta'}V] = \EX [ZV]$ for each $\psi \in W_0^{-\frac{1-(\beta')^2}{2},2}(0,T)$. Here we have
    \begin{equation*}
        \EX [Z_{\beta'}V] = \int_0^T \psi(z)G_{(\beta')^2}\varphi_{\beta'}(z)dz.
    \end{equation*}
    Similarly,~\cref{def:S-transform-log} yields
    \begin{equation*}
        \EX [ZV] = \int_0^T \psi(z)(SZ)(X_{\beta'}(z))dz %= \int_0^T \psi(z)G_{\beta\beta'}\varphi(z)dz
    \end{equation*}
    for $(SZ)(X_{\beta'}(z)) \in  W^{\frac{1-(\beta')^2}{2},2}(0,T)$ while formal computations as above gives
    \begin{equation}
        \label{eq:beta'-projection}
        \EX [ZV] = \int_0^T \psi(z)G_{\beta\beta'}\varphi(z)dz,
    \end{equation}
    where now $\psi \in W_0^{-\frac{1-(\beta')^2}{2},2}(0,T)$ and $\varphi \in  W_0^{-\frac{1-\beta^2}{2},2}(0,T)$. 
    We now claim that
    \begin{align}\label{eq:claim_projection}
        G_{\beta\beta'}\varphi(z) \in W^{\frac{1-(\beta')^2}{2},2}(0,T).      
    \end{align}
    Using claim~\cref{eq:claim_projection}, we see that~\cref{eq:beta'-projection} follows, since from~\cref{def:S-transform-log}, the uniqueness of the $S$-transform gives $(SZ)(X_{\beta'}(z)) = G_{\beta\beta'}\varphi(z)$. Now~\cref{le:inversion} gives that $G_{(\beta')^2}$ is invertible and hence $\EX [Z_{\beta'} V] = \EX [Z V]$ holds if and only if $G_{\beta \beta'} \varphi = G_{(\beta')^2} \varphi_{\beta'}$. For the claim~\cref{eq:claim_projection} note that, by Remark \ref{remark:general-mapping}, we have $G_{\beta\beta'}:W_0^{\alpha + \beta\beta' - 1,2}(0,T) \to W^{\alpha,2}(0,T)$ for any $\alpha\in (0,1-\beta\beta')$. In particular, this holds for $\alpha = \frac{1}{2} - \frac{(\beta')^2}{2}$ yielding 
    $G_{\beta\beta'}:W_0^{-\frac12 - \frac{(\beta')^2}{2}+ \beta\beta',2}(0,T) \to W^{\frac{1}{2} - \frac{(\beta')^2}{2},2}(0,T)$. Now \cref{eq:claim_projection} follows by noting that $W_0^{-\frac{1-\beta^2}{2}}(0,T) \subset W_0^{-\frac12 - \frac{(\beta')^2}{2}+ \beta\beta',2}(0,T)$ since
    $$
    -\frac12 - \frac{(\beta')^2}{2}+ \beta\beta' \leq -\frac{1-\beta^2}{2}
    $$
    is trivially true.  Hence,~\cref{eq:claim_projection} is proved.

\end{proof}
We now address projections of solutions to~\cref{eq:wick-equation-general-problem} into $L^2_{\beta'}(\Omega)$, for some given $\beta'\geq \beta$. As the solution given by~\cref{thm:wick-eqn-is-solvable} is a proper element of $L^2(\Omega)$ only in the case of initial value problem or with Neumann boundary condition, we define the (generalized) projection via the corresponding $S$-transformed deterministic equation, in the spirit of~\cref{def:wick-solution}. To this end, recall that by taking the $S$-transform in~\cref{eq:wick-equation-general-problem} on a test point $h$ gives us, after interchanging the order of expectation and differentiation, a deterministic equation
% \begin{equation*}
%     \left(e^{-\EX [hX_\beta(\cdot)]}(SU(\cdot)(h)')\right)' = f(\cdot).
% \end{equation*}
\begin{equation*}
    \left(e^{-\EX [hX_\beta(\cdot)]}u_h'(\cdot)\right)' = f(\cdot),
\end{equation*}
where $u_h(\cdot) = SU(\cdot)(h)$ is the $S$-transform of the stochastic solution $U$ tested at $h$.
Similarly, the terms $e^{\diamond (-X_\beta(0))} \diamond U'(0)$ and $e^{\diamond (-X_\beta(T))} \diamond U'(T)$ in the boundary conditions are translated into
% $e^{-\EX [hX_\beta(0)]} (SU(0)(h))'$ and $e^{-\EX [hX_\beta(T)]} (SU(T)(h))'$. 
$e^{-\EX [hX_\beta(0)]} u_h'(0)$ and $e^{-\EX [hX_\beta(T)]} u_h'(T)$. 
Hence, assuming that $U(y) \in L^2(\Omega)$ and if we are interested in the projection into $L^2_{\beta'}(\Omega)$, we would formally choose $h = X_{\beta'}(z)$. This motivates the following definition.
\begin{definition}\label{def:projection}
    For any solution to~\cref{eq:wick-equation-general-problem}, its projection into $L^2_{\beta'}(\Omega)$ for $\beta'\in(0,1)$ is a random function $U(y)$ such that $U(y) \in L^2_{\beta'}(\Omega)$ for all $y\in(0,T)$
    and $\tilde{u}_z(y) = (SU(y))(X_{\beta'}(z))\in W^{s_{\beta'},2}(0,T)$ solves the deterministic equation
    \begin{equation}
        \label{eq:projection-det-equation}
        \left(e^{-\beta\beta'\EX [X(z)X(\cdot)]}\tilde{u}'_z(\cdot)\right)' = f(\cdot)
    \end{equation}
    with the corresponding boundary terms given by
    \begin{enumerate}
        \item \emph{Initial data:} 
        \begin{equation*}
            \tilde{u}_z(0) = U_1 \quad \textnormal{and} \quad e^{-\beta\beta'\EX [X(z)X(0)]}\tilde{u}'_z(0) = U_2,
        \end{equation*}
        \item \emph{Dirichlet data:} 
        \begin{equation*}
            \tilde{u}_z(0) = U_1 \quad \textnormal{and} \quad \tilde{u}_z(T) = U_2,
        \end{equation*}
        \item \emph{Neumann boundary conditions:} 
        \begin{equation*}
            e^{-\beta\beta'\EX [X(z)X(0)]}\tilde{u}'_z(0) = U_1 \quad \textnormal{and} \quad    e^{-\beta\beta'\EX [X(z)X(T)]}\tilde{u}'_z(T) = U_2,
        \end{equation*}
        \item \emph{Periodic boundary conditions:} 
        \begin{equation*}
            \tilde{u}_z(0) = \tilde{u}_z(T) \quad \textnormal{and} \quad e^{-\beta\beta'\EX [X(z)X(0)]}\tilde{u}'_z(0) = e^{-\beta\beta'\EX [X(z)X(T)]}\tilde{u}'_z(T).
        \end{equation*}
    \end{enumerate}
\end{definition}
\begin{remark}
    \label{remark:comparison}
    It turns out that this definition makes sense. Indeed, in the case of the initial value problem or with Neumann boundary conditions, the solution to~\cref{eq:wick-equation-general-problem} is actually an element of $L^2_\beta(\Omega)$ with suitable kernel $\varphi$, and hence we can apply~\cref{prop:projection} to compute the projection into $L^2_{\beta'}(\Omega)$ that corresponds to the one obtained through~\cref{def:projection}, cf.~theorem below. In particular for $\beta'=\beta$, the $S$-transformed equation~\cref{eq:projection-det-equation} evaluated at $X_\beta(z)$ yields precisely the solutions given in~\cref{thm:wick-eqn-is-solvable}.
\end{remark}
\begin{thm}\label{thm:projection}
    \textcolor{black}{Assume that the field $X$ is $\beta$-non-degenerate, and } let $\beta<1$ and $\beta'\in [\beta,1)$. Then for any solution $U$ to~\cref{eq:wick-equation-general-problem} and any $y\in(0,T)$, the projection of $U$ into $L^2_{\beta'}(\Omega)$ (in the sense of~\cref{def:projection}) is given by
    \begin{equation*}
        U(y) = \int_0^T \varphi(y,a)d\mu_{\beta'}(a)
    \end{equation*}
    with
    \begin{equation*}
        \varphi(y,a) = G_{\beta'^2}^{-1}[\tilde{u}_\cdot(y)](a),
    \end{equation*}
    and $\tilde{u}_z(y)$ is the solution to the deterministic problem~\cref{eq:projection-det-equation}.
\end{thm}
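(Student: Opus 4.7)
My plan is to construct a candidate projection explicitly from the deterministic solution $\tilde u_z(y)$ and verify that it meets both requirements of~\cref{def:projection}: that it lies in $L^2_{\beta'}(\Omega)$ and that its $S$-transform at $X_{\beta'}(z)$ equals $\tilde u_z(y)$. Throughout, fix $y\in(0,T)$ and regard $z\mapsto \tilde u_z(y)$ as a function on $(0,T)$, which by~\cref{def:projection} lies in $W^{s_{\beta'},2}(0,T)$ with $s_{\beta'}=\frac{1-\beta'^2}{2}$.

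\textbf{Step 1 (Construction of the candidate).} By~\cref{le:inversion}, the operator $G_{\beta'^2}$ is a bijection from $W_0^{-s_{\beta'},2}(0,T)$ onto $W^{s_{\beta'},2}(0,T)$, so there is a unique element $\varphi(y,\cdot) \in W_0^{-s_{\beta'},2}(0,T)$ satisfying $G_{\beta'^2}\varphi(y,\cdot) = \tilde u_\cdot(y)$, namely $\varphi(y,a) := G_{\beta'^2}^{-1}[\tilde u_\cdot(y)](a)$. I then set
\begin{equation*}
U^{*}(y) := \int_0^T \varphi(y,a)\, d\mu_{\beta'}(a),
\end{equation*}
which, in view of~\cref{cor:L2beta-characterisation}, belongs to $L^2_{\beta'}(\Omega)$.

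\textbf{Step 2 ($S$-transform identification).} I verify $(SU^{*}(y))(X_{\beta'}(z)) = \tilde u_z(y)$ for a.e.\ $z\in(0,T)$. For any test element $V = \int_0^T \psi(z)\, d\mu_{\beta'}(z)\in L^2_{\beta'}(\Omega)$ with $\psi\in W_0^{-s_{\beta'},2}(0,T)$, Fubini together with~\cref{eq:wick-exponential-sum-identity} (exactly as in the computation preceding~\cref{cor:L2beta-characterisation}) gives
\begin{equation*}
\EX[U^{*}(y)\,V] \;=\; \int_0^T\!\int_0^T \varphi(y,a)\psi(z)\,e^{\beta'^2\EX[X(a)X(z)]}\,dz\,da \;=\; \int_0^T \psi(z)\,G_{\beta'^2}\varphi(y,\cdot)(z)\,dz \;=\; \int_0^T \psi(z)\,\tilde u_z(y)\,dz.
\end{equation*}
Since $\psi$ is arbitrary,~\cref{def:S-transform-log} identifies the dual element as $(SU^{*}(y))(X_{\beta'}(z)) = \tilde u_z(y)$, matching the requirement of~\cref{def:projection}. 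Uniqueness of the projection follows from~\cref{lemma:S-uniqueness}.

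\textbf{Main obstacle.} The only non-trivial technical issue is to justify that the solution $\tilde u_\cdot(y)$ of the degenerate deterministic ODE~\cref{eq:projection-det-equation} actually lies in $W^{s_{\beta'},2}(0,T)$, since the coefficient $e^{-\beta\beta'\EX[X(z)X(\cdot)]}$ behaves like $|z-\cdot|^{\beta\beta'}$ and vanishes on the diagonal. I would handle this by writing $\tilde u_z$ explicitly through the integration procedure used in the proof of~\cref{thm:non-wick-eqn-is-solvable} (one obtains, for each boundary type, formulas involving integrals of $|z-s|^{-\beta\beta'}$ against $F$ plus, in the Dirichlet and periodic cases, a quotient whose denominator $\int_0^T e^{\beta\beta'\EX[X(z)X(s)]}\,ds$ is bounded away from $0$ uniformly in $z$) and then invoking the mapping properties from~\cref{remark:general-mapping} with $\delta = \beta\beta'$; the assumption $\beta'\geq\beta$ is what ensures the relevant Sobolev exponents lie in the admissible range. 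Once this regularity is in hand, the remainder of the argument is just a duality computation together with the invertibility of $G_{\beta'^2}$.
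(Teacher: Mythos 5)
Your overall strategy coincides with the paper's: construct the candidate as $\int_0^T G_{\beta'^2}^{-1}[\tilde u_\cdot(y)](a)\,d\mu_{\beta'}(a)$, reduce the whole statement to showing that $z\mapsto \tilde u_z(y)$ lies in $W^{s_{\beta'},2}(0,T)$, and obtain that regularity from the explicit solution formula of \cref{eq:projection-det-equation} via the mapping properties of $G_{\beta\beta'}$ and the inequality $1-\beta\beta'\ge 1-(\beta')^2$ coming from $\beta'\ge\beta$. Your Steps 1 and 2 are correct and simply spell out the duality argument that the paper compresses into a single sentence by citing \cref{cor:L2beta-characterisation}, \cref{def:S-transform-log} and \cref{le:inversion}; uniqueness via \cref{lemma:S-uniqueness} is also as in the paper.

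The one place where your sketch does not close is the quotient term in the Dirichlet (and periodic) case. There the solution contains
\begin{equation*}
(G_{\beta\beta'}I_{[0,t]})(z)\cdot\frac{U_2-U_1-(G_{\beta\beta'}[FI_{[0,T]}])(z)}{(G_{\beta\beta'}I_{[0,T]})(z)},
\end{equation*}
and \cref{remark:general-mapping} with $\delta=\beta\beta'$ only controls each individual $G_{\beta\beta'}$-image; it says nothing about the product or the reciprocal. Observing that the denominator is bounded away from zero is necessary but not sufficient: one still needs that $W^{s,2}(0,T)\cap L^\infty(0,T)$ is stable under products, and under division by functions whose reciprocal is bounded --- this is exactly the paper's \cref{lemma:simple} --- together with the fact that $z\mapsto (G_{\beta\beta'}I_{[0,t]})(z)$ is $(1-\beta\beta')$-H\"older continuous, hence in $L^\infty(0,T)\cap W^{s_{\beta'},2}(0,T)$ precisely because $\beta'\ge\beta$ gives $1-\beta\beta'\ge 1-(\beta')^2$ (for $\beta'=\beta$ one argues instead from $I_{[0,t]}\in W_0^{-s_\beta,2}(0,T)$ and \cref{le:inversion}). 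Supplying that elementary product/quotient lemma and the H\"older estimate closes the gap; apart from this, your plan matches the paper's proof.
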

The proof makes use of the following elementary lemma. For the reader's convenience, we present a short proof.
\begin{lemma}
    \label{lemma:simple}
    Let $s\in (0,1)$ and let $T<\infty$ be fixed. Let $f,g\in W^{s,2}(0,T)$.
    \begin{itemize}
        \item If $f,g\in L^\infty(0,T)$, then $fg \in W^{s,2}(0,T)$.
        \item If $f,1/g \in L^\infty(0,T)$, then $f/g \in L^\infty(0,T) \cap W^{s,2}(0,T)$.
    \end{itemize}
\end{lemma}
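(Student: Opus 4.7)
The plan is to use the equivalence (for $s\in(0,1)$ and a smooth bounded interval) between the restriction norm in \cref{def:frac-sobolev-general} and the intrinsic Slobodeckij--Gagliardo norm
\begin{equation*}
\|f\|_{W^{s,2}(0,T)}^2 \;\asymp\; \|f\|_{L^2(0,T)}^2 + [f]_{s,2}^2, \qquad [f]_{s,2}^2 := \int_0^T\!\!\int_0^T \frac{|f(x)-f(y)|^2}{|x-y|^{1+2s}}\,dx\,dy.
\end{equation*}
All estimates then reduce to pointwise algebraic manipulations of the difference quotient, which is the standard route for closure of fractional Sobolev spaces under Lipschitz-type nonlinear operations.

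For the first item, I would write the product difference as
\begin{equation*}
f(x)g(x)-f(y)g(y) \;=\; f(x)\bigl(g(x)-g(y)\bigr) + g(y)\bigl(f(x)-f(y)\bigr),
\end{equation*}
square, use $(a+b)^2\le 2a^2+2b^2$, and bound $|f(x)|,|g(y)|$ by the $L^\infty$-norms. Dividing by $|x-y|^{1+2s}$ and integrating yields
\begin{equation*}
[fg]_{s,2}^2 \;\le\; 2\|f\|_\infty^2\,[g]_{s,2}^2 + 2\|g\|_\infty^2\,[f]_{s,2}^2,
\end{equation*}
while $\|fg\|_{L^2}\le \min(\|f\|_\infty\|g\|_{L^2},\|g\|_\infty\|f\|_{L^2})$. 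Both being finite by assumption, $fg\in W^{s,2}(0,T)$.

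For the second item, I would first show $1/g\in W^{s,2}(0,T)$, which gives everything via the first part applied to $f\cdot (1/g)$ (note $f,1/g\in L^\infty$ is precisely the hypothesis there). The boundedness $1/g\in L^\infty$ gives $\|1/g\|_{L^2}\le T^{1/2}\|1/g\|_\infty$. For the seminorm, the identity
\begin{equation*}
\frac{1}{g(x)}-\frac{1}{g(y)} \;=\; \frac{g(y)-g(x)}{g(x)g(y)}
\end{equation*}
combined with $|g(x)g(y)|^{-1}\le \|1/g\|_\infty^2$ gives pointwise
\begin{equation*}
\left|\tfrac{1}{g(x)}-\tfrac{1}{g(y)}\right|^2 \;\le\; \|1/g\|_\infty^4\,|g(x)-g(y)|^2,
\end{equation*}
hence $[1/g]_{s,2}^2\le \|1/g\|_\infty^4\,[g]_{s,2}^2<\infty$. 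Thus $1/g\in W^{s,2}(0,T)\cap L^\infty(0,T)$, and applying the first item to $f$ and $1/g$ yields $f/g\in W^{s,2}(0,T)$; the bound $\|f/g\|_\infty\le\|f\|_\infty\|1/g\|_\infty$ is immediate.

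There is no real obstacle here; the only point that deserves a brief justification is the appeal to the equivalence of the restriction-based norm in \cref{def:frac-sobolev-general} with the intrinsic Slobodeckij seminorm on the smooth bounded interval $(0,T)$ for $s\in(0,1)$, which is classical and follows from the existence of a bounded extension operator $W^{s,2}(0,T)\to W^{s,2}(\R)$.
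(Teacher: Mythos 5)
Your proof is correct and follows essentially the same route as the paper: reduce to the Gagliardo--Slobodeckij seminorm, split the product difference as $f(x)(g(x)-g(y))+g(y)(f(x)-f(y))$, and use the $L^\infty$ bounds. The only cosmetic difference is in the second item, where you first establish $1/g\in W^{s,2}(0,T)$ and then invoke the product result, whereas the paper estimates the difference quotient of $f/g$ directly with a single triangle-inequality split; both rest on the same pointwise bounds.
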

\begin{proof}
    Recall that the norm of $W^{s,2}(0,T)$ can be given as
    \begin{equation*}
        \Vert f\Vert_{W^{s,2}(0,T)} = \Vert f\Vert_{L^2(0,T)} + \left(\int_0^T\int_0^T \frac{|f(x)-f(y)|^2}{|x-y|^{1+2s}}dxdy\right)^{1/2}.
    \end{equation*}
    Since $[0,T]$ is compact, we have $L^\infty(0,T) \subset L^2(0,T)$ and hence it suffices to consider the seminorm part. By using triangle and Jensen's inequality
    \begin{equation*}
        |f(x)g(x)-f(y)g(y)|^2 \leq 2\left[|f(x)|^2|g(x)-g(y)|^2 + |g(y)|^2|f(x)-f(y)|^2\right]
    \end{equation*}
    gives us the first claim, since $f,g\in L^\infty(0,T) \cap W^{s,2}(0,T)$. Similarly, for the second claim, we write
    \begin{equation*}
        \left\vert\frac{f(x)}{g(x)} - \frac{f(y)}{g(y)}\right\vert \leq \frac{|f(x)-f(y)|}{|g(x)|} + \frac{|f(y)|}{|g(x)g(y)|}|g(x)-g(y)|
    \end{equation*}
    from which the claim follows since $f,1/g \in L^\infty(0,T)$ and $f,g\in W^{s,2}(0,T)$.
\end{proof}
\begin{proof}[Proof of~\cref{thm:projection}]
    It is a routine exercise to check that the unique solution to equation~\cref{eq:projection-det-equation} is given by
    \begin{equation*}
        \tilde{u}_z(t) = \tilde{u}_z(0) + \int_0^t \left[e^{\beta\beta'\EX [X(z)X(y)]}e^{-\beta\beta'\EX [X(z)X(0)]}\tilde{u}_z'(0) + e^{\beta\beta'\EX [X(z)X(y)]}F(y)\right] dy,
    \end{equation*}
    where $u(0)$ and $\tilde{u}_z'(0)$ are determined by the boundary/initial conditions. Hence, in order to prove the result it suffices to show that this solution is an element of $W^{s_{\beta'},2}(0,T)$ w.r.t.~the test point $z$. Indeed, then it follows that the projection of $U(t)$ into $L^2_{\beta'}(\Omega)$\textcolor{black}{, in the sense of \cref{{def:projection}}}, has a representation \textcolor{black}{(by Definition \ref{def:S-transform-log} and applying Corollary \ref{cor:L2beta-characterisation} alongside Lemma \ref{le:inversion})}
    \begin{equation*}
        U_{\beta'}(t) = \int_0^T \left[G_{\beta'^2}^{-1}\tilde{u}_\cdot(t)\right](z)d\mu_{\beta'}(z).
    \end{equation*}
       Observe now that $\tilde{u}_z(t)$ can be written as
    \begin{equation}\label{eq:tilde_uz}
        \tilde{u}_z(t) = \tilde{u}_z(0) + e^{-\beta\beta'\EX [X(z)X(0)]}\tilde{u}_z'(0)(G_{\beta\beta'}I_{[0,t]})(z) + (G_{\beta\beta'}[FI_{[0,t]}])(z).
    \end{equation}
    Here we have $FI_{[0,t]} \in W^{-\delta,2}(0,T)$ for any $\delta>0$, and thus, thanks to~\cref{le:inversion}, we have $G_{\beta\beta'}[FI_{[0,t]}] \in W^{\frac{1-\beta\beta'}{2},2}(0,T)$.
    Moreover,~\cite[Proposition 2.1]{hitchhiker} implies that $W^{\frac{1-\beta\beta'}{2},2}(0,T) \subset W^{s_{\beta'},2}(0,T)$ since $1-\beta\beta' \geq 1-\beta'^2$. Thus,
    \begin{equation*}
        G_{\beta\beta'}[FI_{[0,t]}] \in W^{s_{\beta'},2}(0,T)
    \end{equation*}
    and hence, by linearity, it suffices to consider the terms
    \begin{equation*}
        \tilde{u}_z(0) + e^{-\beta\beta'\EX [X(z)X(0)]}\tilde{u}_z'(0)(G_{\beta\beta'}I_{[0,t]})(z)
    \end{equation*}
    arising from boundary conditions.
    For the initial value problem we have $\tilde{u}_z(0)=U_1$ and $e^{-\beta\beta'\EX [X(z)X(0)]}\tilde{u}_z'(0)=U_2$ from which the claim follows directly by the above argument (by replacing $F$ with a constant function 1). Similarly, with Neumann boundary conditions we may assume (by shifting with an arbitrary constant in the solution if necessary) that $\tilde{u}_z(0)=0$. Thus, this case follows directly by the above argument as well. With Dirichlet data we see that~\cref{eq:tilde_uz} at time $T$ becomes
    \begin{equation*}
        U_2 = \tilde{u}_z(T) = U_1 + e^{-\beta\beta'\EX [X(z)X(0)]}\tilde{u}_z'(0)(G_{\beta\beta'}I_{[0,T]})(z) + (G_{\beta\beta'}[F I_{[0,T]}])(z)
    \end{equation*}
    from which we get
    \begin{equation*}
        e^{-\beta\beta'\EX [X(z)X(0)]}\tilde{u}_z'(0) = \frac{U_2-U_1 - (G_{\beta\beta'}[F I_{[0,T]}])(z)}{(G_{\beta\beta'}I_{[0,T]})(z)}.
    \end{equation*}
    Inserting the above into \cref{eq:tilde_uz}, we are left to prove that
    \begin{equation*}
        (G_{\beta\beta'}I_{[0,t]})(z)\cdot\frac{U_2-U_1 - (G_{\beta\beta'}[F I_{[0,T]}])(z)}{(G_{\beta\beta'}I_{[0,T]})(z)} \in W^{s_{\beta'},2}(0,T).
    \end{equation*}
    Note that here
    \begin{equation*}
        \begin{split}
            (G_{\beta\beta'}I_{[0,t]})(z) & = \int_0^T |z-y|^{-\beta\beta'}e^{\beta\beta' g(z,y)}I_{[0,t]}(y)dy \\
                                        & = \int_0^t |z-y|^{-\beta\beta'}e^{\beta\beta' g(z,y)}dy.
        \end{split}
    \end{equation*}
    Since $g$ is a bounded function and $\beta\beta'<1$, it follows that, for any $t\in (0,T)$, $G_{\beta\beta'}I_{[0,t]}(z)$ is $1-\beta\beta'$ H\"older continuous. Thus, for any $\beta'\geq \beta$ we have $G_{\beta\beta'}I_{[0,t]} \in L^\infty([0,T]) \cap W^{s_{\beta'},2}(0,T)$. Indeed, for $\beta'>\beta$ this follows from $C^{1-\beta\beta'}(0,T)\subset  W^{s_{\beta'},2}(0,T)$ since
    \begin{equation*}
        1-\beta\beta' \geq 1-(\beta')^2
    \end{equation*}
    while for $\beta'=\beta$ this follows directly from the fact $I_{[0,t]} \in W_0^{-s_\beta,2}(0,T)$ and~\cref{le:inversion}.
    Moreover, for any $t\in (0,T)$ the function $G_{\beta\beta'}I_{[0,t]}$ is bounded from below, and hence 
    \begin{equation*}
        % \frac{1}{G_{\beta\beta'}I_{[0,t]}} \in L^\infty([0,T]).
        \frac{1}{G_{\beta\beta'}I_{[0,t]}} \in L^\infty([0,T]) \cap W^{s_{\beta'},2}(0,T) \quad \textnormal{for any } t\in(0,T).
    \end{equation*}
    Arguing similarly we also see that
    $U_2-U_1-(G_{\beta\beta'}FI_{[0,T]})(z) \in L^\infty([0,T]) \cap W^{s_{\beta'},2}(0,T)$ and hence the claim follows from~\cref{lemma:simple}. Treating the periodic case with similar arguments completes the whole proof.
\end{proof}
\begin{remark}
    As pointed out in~\cref{remark:comparison}, for the initial value problem or in the case of Neumann boundary conditions, choosing $\beta'=\beta$ in~\cref{thm:projection} yields precisely the solutions provided in~\cref{thm:wick-eqn-is-solvable}.  On the contrary, with Dirichlet boundary conditions we obtain that ``the projection'' of the Wick product
    \begin{equation*}
        \kappa \diamond \int_0^t e^{\diamond X_\beta(s)}ds
    \end{equation*}
    into $L_\beta^2(\Omega)$ leads to the term (on the $S$-transformed side)
    \begin{equation*}
        (G_{\beta^2}I_{[0,t]})(z)\cdot\frac{U_2-U_1 - (G_{\beta^2}FI_{[0,T]})(z)}{(G_{\beta^2}I_{[0,T]})(z)}
    \end{equation*}
    which is precisely what one would expect when taking the $S$-transform at $X_\beta(z)$ on $\kappa \diamond \int_0^t e^{\diamond X_\beta(s)}ds$ and using identities $S(X\diamond Y) = (SX)(SY)$ and $(SX^{\diamond (-1)}) = 1/(SX)$. This computation is of course only formal as $S$-transform at points $X_\beta(z)$ is not defined in the Hida distribution space $(\mathcal{S})_{-1}$.
\end{remark}

% imports the bibtex file
\bibliographystyle{plain}
\bibliography{1D-wick-non-wick}

\end{document}